\mathchardef\hyph="2D
\newcommand{\CC}{\mathbb{C}}
\newcommand{\PP}{\mathbb{P}}
\newcommand{\Oh}{\mathcal{O}}
\newcommand{\polyring}{\mathbb{C}[x_0,x_1,x_2,x_3,x_4]}
\newcommand{\doublequotient}{/\!/}
\newcommand{\maxsemistabfam}{M_{\leq 0}(\lambda)}
\newcommand{\maxunstabfam}{M_{<0}(\lambda)}
\newtheorem{lemma}{Lemma}
\newtheorem{theorem}[lemma]{Theorem}
\newtheorem{definition}[lemma]{Definition}
\newtheorem{prop}[lemma]{Proposition}
\newtheorem{remark}[lemma]{Remark}
\title{The GIT Compactification of Quintic Threefolds}
\author{Chirag Lakhani}
\address{Department of Mathematics \& Computer Science \\
         High Point University \\
         High Point, NC, 27262\\
         USA}
\email{clakhani@highpoint.edu}
\begin{document}

\begin{abstract}
In this article, we study the geometric invariant theory (GIT) compactification of quintic threefolds.  We study singularities, which arise in non-stable quintic threefolds, thus giving a partial description of the stable locus.  We also give an explicit description of the boundary components and stratification of the GIT compactification.
\end{abstract}

\maketitle 

\thispagestyle{plain}

\medskip

\section{Introduction}
\label{sec:intro}
\medskip

\par
Quintic threefolds are some of the simplest examples of Calabi-Yau varieties.  Physicists have given Calabi-Yau varieties a great deal of attention, in the last 30 years, because they give the right geometric conditions for some superstring compactifications \cite{wittenstrominger}.  In mirror symmetry, in particular, the Kahler moduli space and complex structure moduli space of Calabi-Yau varieties are important objects of study.  The purpose of this paper is to describe the complex structure moduli space of quintic threefolds using geometric invariant theory (GIT).
\par
GIT constructions of moduli spaces of projective varieties are automatically projective, therefore have a natural compactification.  The Hilbert-Mumford criterion provides a numerical tool, which is useful when constructing moduli spaces using GIT.  Despite having this tool, it is still difficult to construct moduli spaces in dimension 2 or higher.  There are a some cases where such moduli spaces have been constructed, such as degree 2 and degree 4 $K3$ surfaces by Shah \cite{shahdeg2,shahdeg4}, cubic threefolds by Allcock and Yokoyama \cite{allcock,yokoyama}, and cubic fourfolds by Laza \cite{laza}.   
\par
A quintic threefold is the zero set of a homogeneous degree $5$ form $f \in \polyring$.  The space $H^0(\PP^4,\Oh_{\PP^4}(1)) \cong \CC^5$ represents the set of degree $1$ forms in $\PP^4$.  The parameter space of quintic threefolds is then represented by $V:=\PP(Sym^5(\CC^5))$, which is the projectivization of the space of coefficients of quintic forms $f$.   $\CC^5$  and $Sym^5(\CC^5)$ both have natural $SL(5,\CC)$-actions.  Two threefolds are equivalent if one form can be transformed into another by an $SL(5,\CC)$-action.  In order to construct the moduli space using GIT, the stable and semistable quintic threefolds must be identified.  A quintic threefold $X$ is semistable if there is a $SL(5,\CC)$-invariant function on $\PP(V)$ where $X$ does not vanish.  A semistable quintic threefold $X$ is stable if its $SL(5,\CC)$-orbit, in the space of semistable quintic threefolds, is closed and the isotropy group of $X$ is finite.  The space of semistable quintic threefolds is denoted $\PP(V)^{ss}$ and the space of stable quintic threefolds is denoted $\PP(V)^{s}$.  The $SL(5,\CC)$-orbits of threefolds in $\PP(V)^{s}$ are closed, so the quotient $\PP(V)^{s} \doublequotient SL(5,\CC)$ forms an orbit space.  The addition of semistable quintic threefolds compactifies the moduli space by making it a projective variety i.e. $\PP(V)^{ss} \doublequotient SL(5,\CC)$ is projective.  Two semistable quintic threefolds $X,Y \in \PP(V)^{ss} \setminus \PP(V)^{s}$ map to the same point in $\PP(V)^{ss} \doublequotient SL(5,\CC)$ if their closures satisfy 
\begin{equation}
\overline{SL(5,\CC)\cdot X} \cap \overline{SL(5,\CC)\cdot Y} \neq \emptyset.
\label{eq:orbitclosure}
\end{equation}

This establishes an orbit-closure relationship for semistable quintic threefolds where $X \sim Y$ if they satisfy the property \ref{eq:orbitclosure}.  Furthermore, all threefolds in the same orbit-closure equivalence class map to the same point in $\PP(V)^{ss} \doublequotient SL(5,\CC)$.  Every orbit-closure equivalence class in $\PP(V)^{ss} \setminus \PP(V)^{s}$ has a unique closed orbit representative called the \textit{minimal orbit}.  The boundary components of $\PP(V)^{ss} \doublequotient SL(5,\CC) \setminus \PP(V)^{s} \doublequotient SL(5,\CC)$ are represented precisely by these minimal orbits.  

\begin{remark}  We will follow the terminology in GIT \cite{mumford}. Unstable will mean not semistable, non-stable will mean not stable, and strictly semistable will mean semistable but not stable.
\end{remark}

The main results of this paper are a description of the non-stable quintic threefolds in terms of singularities, partial description of the stable locus, and a complete description of boundary components and stratification of the GIT compactification using minimal orbits.  The first main result of the paper is given in section~\ref{sec:geointermaxsemi} which describes the non-stable quintic threefolds in terms of singularities.

\begin{theorem} \label{theorem:non-stable} A quintic threefold $X$ is non-stable if and only if one of the following properties holds: 
\begin{enumerate}
\item $X$ contains a double plane;
\item $X$ is a reducible variety, where a hyperplane is one of the components;
\item $X$ contains a triple line;
\item $X$ contains a quadruple point;
\item $X$ contains a triple point $p$ with the following properties:
\begin{enumerate}	
\item the tangent cone of $p$ is the union of a double plane and another hyperplane;
\item the line connecting a point in the double plane with the triple point has intersection multiplicity 5 with $X$;
\end{enumerate}
\item $X$ has a double line $L$ where every point $p \in L$ has the following properties:
\begin{enumerate}
\item the tangent cone of each point $p \in L$ is a double plane $P_p$;
\item each point $p \in L$ has the same double plane tangent cone i.e. $P_p=P$ for some double plane $P$; 
\item the line connecting the point on the tangent cone $P_p$ and a point $p \in L$ has intersection multiplicity 4 with $X$;
\end{enumerate}
\item $X$ contains a triple point $p$ and a plane $P$ with the following properties:
\begin{enumerate}
\item the tangent cone of $p$ contains a triple plane of $P$;
\item the singular locus of $X$, when restricted to $P$, is the intersection of two quartic curves $q_1$ and $q_2$;
\item the point $p$ is a quadruple point of $q_1$ and $q_2$.
\end{enumerate}
\end{enumerate}
\end{theorem}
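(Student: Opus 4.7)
The plan is to apply the Hilbert-Mumford numerical criterion. A quintic $X = \{f = 0\}$ fails to be stable precisely when, after a change of coordinates by an element of $SL(5,\CC)$, the form $f$ lies in the linear span $\maxsemistabfam$ of those degree-$5$ monomials having non-positive weight for some non-trivial one-parameter subgroup $\lambda \colon \CC^* \to SL(5,\CC)$. Normalizing $\lambda$ to be diagonal with integer weights $r_0 \geq r_1 \geq \cdots \geq r_4$ summing to zero, the problem reduces to classifying, up to symmetry, those $\lambda$ for which $\maxsemistabfam$ is inclusion-maximal, and then translating each such family into a geometric statement about $X$.

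The first step is this combinatorial classification. Each of the $\binom{9}{4} = 126$ degree-$5$ monomials $x_0^{i_0}\cdots x_4^{i_4}$ contributes a linear functional $\sum i_j r_j$ on the Weyl chamber $C = \{(r_0,\ldots,r_4) : r_0 \geq \cdots \geq r_4,\ \sum r_i = 0\}$, and $\maxsemistabfam$ is determined by the sign pattern of these functionals at $\lambda$. Inclusion-maximal families arise when $\lambda$ lies on an extremal face of the hyperplane arrangement so cut out of $C$; enumerating these faces is a finite but delicate bookkeeping task.

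The second step is to translate each such maximal family into a geometric condition on $X$. Cases (1)--(4) correspond respectively to the weight vectors $(2,2,2,-3,-3)$, $(1,1,1,1,-4)$, $(3,3,-2,-2,-2)$, and $(4,-1,-1,-1,-1)$, which force $f \in (x_3,x_4)^2$, $x_4 \mid f$, $f \in (x_2,x_3,x_4)^3$, and $f \in (x_1,x_2,x_3,x_4)^4$ --- i.e., a double plane, a hyperplane component, a triple line, and a quadruple point. Cases (5), (6), and (7) arise from subtler weight vectors where $\maxsemistabfam$ is not cut out by a single monomial-ideal condition; the simultaneous vanishing of several specific monomials translates into intersection-multiplicity conditions on lines through a distinguished triple or double point, or into the factorization of the singular locus on a plane as the intersection of two quartic curves. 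For each such case one computes the tangent cone and the restriction of $f$ to the relevant coordinate flag, and verifies that the geometric description stated in the theorem emerges. Conversely, each of the seven geometric conditions admits, by an appropriate choice of coordinates, a destabilizing 1-PS of the corresponding type.

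The main obstacle is twofold: carrying out the combinatorial enumeration exhaustively so that no maximal $\lambda$ is overlooked, and cleanly interpreting the weight configurations underlying cases (5)--(7). Unlike the monomial-ideal destabilizations of cases (1)--(4), these require the simultaneous vanishing of several monomials, which does not correspond to any standard named singularity but rather to a combination of tangent-cone data and intersection multiplicities along specific lines --- the very content that makes the statement of the theorem somewhat intricate.
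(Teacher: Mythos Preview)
Your plan is correct and matches the paper's approach almost exactly: the paper normalizes to diagonal $\lambda$ with decreasing weights, uses the induced poset on monomials (together with a linear-programming search) to extract precisely seven maximal non-stable families with destabilizing weights $(2,2,2,-3,-3)$, $(1,1,1,1,-4)$, $(3,3,-2,-2,-2)$, $(4,-1,-1,-1,-1)$, $(1,0,0,0,-1)$, $(4,4,-1,-1,-6)$, $(6,1,1,-4,-4)$, and then proves one proposition per family giving the geometric translation via the associated ``bad flag,'' tangent cones, and intersection multiplicities. The only difference is cosmetic---you phrase the enumeration as walking extremal faces of a hyperplane arrangement in the Weyl chamber, whereas the paper organizes it as a poset search on monomials---but the content and the seven-case output are identical.
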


A partial description of the stable locus is given in section~\ref{sec:stablelocus}.  The analysis of singularities of the non-stable quintic threefolds gives a partial description of the singularities that occur in the stable locus.  In particular, all smooth quintic threefolds and quintic threefold with at worst $A \hyph D \hyph E$ singularities will be GIT stable.  Following the approach of Laza \cite{laza}, the minimal orbits can be explicitly described using Luna's criterion\cite{luna,vinberg}.  In order to find the minimal orbits, the non-stable quintic threefolds degenerate into families of quintic threefolds given by equations~\ref{eq:MO-A}-\ref{eq:MO-D}, which are denoted the \textit{first level of minimal orbits}.  Luna's criterion determines which members of these families represent closed orbits, this is done in section~\ref{sec:firstlevelminimalorbits}.  Certain hypersurfaces in these families are unstable and therefore represent unstable quintic threefolds or they degenerate, even further, into a family of quintic threefolds given by equations~\ref{eq:MO2-I}-\ref{eq:MO2-X}.  The families represented by equations~\ref{eq:MO2-I}-\ref{eq:MO2-X} are called the \textit{second level of minimal orbits}.  The second level of minimal orbits represent how the boundary strata of the components in equations~\ref{eq:MO-A}-\ref{eq:MO-D} intersect.  Applying Luna's criterion to the second level of minimal orbits will determine which quintic threefolds are closed orbits, unstable orbits, and which hypersurfaces degenerate even further.  It will be shown in section~\ref{sec:secondlevelminimalorbits} that non-closed orbits in the second level of minimal orbits will eventually degenerate to the hypersurface $x_0x_1x_2x_3x_4$, which is the hypersurface with normal crossings singularities.  This completely determines the boundary structure and stratification of the GIT compactification of the moduli space of quintic threefolds.
\par
Section~\ref{sec:maxnon-stable} is devoted to the combinatorics and geometrical study of non-stable quintic threefolds.  Using the Hilbert-Mumford criterion, a combinatorial study of the monomials to be included in maximal non-stable families of quintic threefolds will be done.  Each maximal non-stable family has a destabilizing one-parameter subgroup (1-PS) $\lambda$ which gives rise to a ``bad flag'' that picks out the worst singularities in the family.  This is used to prove Theorem~\ref{theorem:non-stable}. The last part of section~\ref{sec:maxnon-stable} gives a partial description of the stable locus.  Section~\ref{sec:minimalorbit} introduces Luna's criterion and then applies it to study the closed orbits in equations~\ref{eq:MO-A}-\ref{eq:MO-D}.  Section~\ref{sec:boundary} studies the closed orbits and further degenerations of equations~\ref{eq:MO2-I}-\ref{eq:MO2-X}, thereby giving a complete description of the boundary stratification of the GIT compactification of quintic threefolds. 
\subsection*{Acknowledgements}  The author would like to Amassa Fauntleroy and Radu Laza for their advice and support in writing this paper.  The author would also like to thank Ryan Therkelsen for introducing the author to Stembridge's poset Maple package.
  
\medskip

\section{Maximal Nonstable Families}
\label{sec:maxnon-stable}

\medskip

The Hilbert-Mumford criterion is an important tool when establishing stability and semistability in GIT.

\begin{remark}  Following the convention in \cite{laza}, a \textit{normalized 1-PS $\lambda$} is a map $\lambda: \CC^* \to T \subseteq SL(5,\CC)$, where $T$ is the the standard maximal torus $T$ of $SL(5,\CC)$, with the additional property that $\lambda(t) = diag(t^{a_0},t^{a_1},t^{a_2},t^{a_3},t^{a_4})$ satisfy $a_0 \geq a_1 \geq a_2 \geq a_3 \geq a_4$ and $a_0+a_1+a_2+a_3+a_4=0$.
\end{remark} 

For a quintic form $f \in \polyring$ and a normalized 1-PS $\lambda = \langle a_0,a_1,a_2,a_3,a_4 \rangle$, the numerical function is defined as follows,
\begin{equation}
\mu(f,\lambda):=max \{ a_0 i_0+a_1i_1+a_2i_2+a_3 i_3+a_4i_4 | f=\sum c_{i_0i_1i_2i_3i_4}x_0^{i_0}x_1^{i_1}x_2^{i_2}x_3^{i_3}x_4^{i_4}, c_{i_0i_1i_2i_3i_4} \neq 0 \}.
\end{equation}

The Hilbert-Mumford criterion states \textit{a quintic form f is stable (semistable) if and only if for every 1-PS $\lambda$ the numerical function $\mu(f,\lambda) > 0 ( \geq 0)$}. It can be restated so that a quintic form $f$ is non-stable (unstable) if there exists a 1-PS $\lambda$ where $\mu(f,\lambda) \leq 0$ $( < 0)$.  If quintic forms are analyzed up to coordinate transformation ($SL(5,\CC)$-action), then the $G$-equivariance of the numerical function\cite{standardmonomial}, 
\begin{equation} 
\label{eq:gequivariant}
\mu(x,\lambda) = \mu(gx, g \lambda g^{-1}),
\end{equation}
restricts to check checking the criterion for only normalized 1-PS $\lambda$ in the standard maximal torus $T$ of $SL(5,\CC)$.

\begin{remark}
In the remainder of the paper the a monomial $x_0^{i_0}x_1^{i_1}x_2^{i_2}x_3^{i_3}x_4^{i_4}$ will also be written as a vector denoted $[i_0,i_1,i_2,i_3,i_4]$.  Also, the following combinatorial procedure described below is based on similar combinatorial techniques described in Mukai \cite{mukai} and applied by Laza \cite{laza} in the case of cubic fourfolds. 
\end{remark}

The normalized 1-PS $\lambda$ induce a partial order on the set of quintic monomials $[i_0,i_1,i_2,i_3,i_4]$ given by
\[
[i_0,i_1,i_2,i_3,i_4] \geq [j_0,j_1,j_2,j_3,j_4] \Longleftrightarrow 
\begin{cases} 
  \mu([i_0,i_1,i_2,i_3,i_4],\lambda) \geq \mu([j_0,j_1,j_2,j_3,j_4],\lambda)  \\
  \mbox{for all normalized 1-PS } \lambda  
\end{cases}
\]
The following lemma is useful in creating an algorithm to determine the poset structure of quintic monomials.
\begin{lemma} [c.f. \cite{mukai} p.225] For two monomials $[i_0,i_1,i_2,i_3,i_4]$ and $[j_0,j_1,j_2,j_3,j_4]$
\[
[i_0,i_1,i_2,i_3,i_4] \geq [j_0,j_1,j_2,j_3,j_4] \Longleftrightarrow 
\begin{cases} 
  i_0 \geq j_0 \\
  i_0+i_1 \geq j_0+j_1  \\
  i_0+i_1+i_2 \geq j_0+j_1+j_2  \\
  i_0+i_1+i_2+i_3 \geq j_0+j_1+j_2+j_3  
  
\end{cases}
\]
\end{lemma}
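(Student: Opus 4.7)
The plan is to reduce the equivalence to a statement about partial sums via Abel summation. Set $d_k = i_k - j_k$, so that for any normalized 1-PS $\lambda = \langle a_0,\ldots,a_4\rangle$ the numerical functions evaluated on a single monomial give
\[
\mu([i_0,\ldots,i_4],\lambda) - \mu([j_0,\ldots,j_4],\lambda) = \sum_{k=0}^{4} a_k d_k.
\]
Let $S_k = d_0 + d_1 + \cdots + d_k$. The crucial observation is that $S_4 = 0$, since both monomials have total degree $5$. The partial-sum inequalities in the lemma are precisely the assertion that $S_k \geq 0$ for $k = 0,1,2,3$.

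Next I would rewrite the weighted sum by summation by parts:
\[
\sum_{k=0}^{4} a_k d_k \;=\; a_4 S_4 + \sum_{k=0}^{3} (a_k - a_{k+1}) S_k \;=\; \sum_{k=0}^{3} (a_k - a_{k+1}) S_k,
\]
using $S_4 = 0$. By the normalization $a_0 \geq a_1 \geq \cdots \geq a_4$, every coefficient $a_k - a_{k+1}$ is nonnegative. Thus for the ($\Leftarrow$) direction, if all four partial sums $S_k$ are nonnegative, the displayed expression is a sum of products of nonnegative numbers, hence $\geq 0$ for every normalized 1-PS; this is exactly $[i_0,\ldots,i_4] \geq [j_0,\ldots,j_4]$.

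For the ($\Rightarrow$) direction I would exhibit, for each $k \in \{0,1,2,3\}$, a single normalized 1-PS $\lambda_k$ that isolates $S_k$. Take the weights of $\lambda_k$ to be $(4-k,\ldots,4-k,-(k+1),\ldots,-(k+1))$, with the value $4-k$ repeated $k+1$ times and $-(k+1)$ repeated $4-k$ times. A quick check shows these are integers, weakly decreasing, and sum to zero, so $\lambda_k$ is a bona fide normalized 1-PS. Substituting into the Abel-summation identity, every difference $a_m - a_{m+1}$ vanishes except at $m = k$, where it equals $5$. Hence
\[
\mu([i_0,\ldots,i_4],\lambda_k) - \mu([j_0,\ldots,j_4],\lambda_k) = 5 S_k,
\]
and the hypothesis $[i_0,\ldots,i_4] \geq [j_0,\ldots,j_4]$ forces $S_k \geq 0$ for each such $k$.

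The proof is essentially mechanical once the right change of variables is made; the only subtle point is noticing that the boundary term $a_4 S_4$ in the Abel summation vanishes because of the common degree $5$, which is what makes the four (rather than five) partial-sum inequalities suffice. The design of the test 1-PS $\lambda_k$ is forced: one wants $a_m - a_{m+1}$ to be a positive multiple of a delta function at $m = k$, which up to clearing denominators pins down the weights uniquely.
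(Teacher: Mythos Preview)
Your argument is correct. The Abel summation identity is verified exactly as you wrote it, the vanishing of $S_4$ follows from both monomials having total degree $5$, and the four test one-parameter subgroups $\lambda_k$ are genuine normalized 1-PS that isolate each partial sum (indeed they coincide, up to relabelling, with the destabilizing 1-PS for families $SS1$--$SS4$ in the paper).

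The paper itself does not prove this lemma; it is stated with a reference to Mukai (\cite{mukai}, p.~225) and then used as a computational tool. Your write-up therefore supplies a self-contained proof where the paper defers to the literature. The approach you take---rewriting the weighted difference as a nonnegative combination of partial sums via summation by parts, then reading off each $S_k$ with a ``step'' 1-PS---is the standard one and is essentially what Mukai does in the cited passage, so there is no substantive divergence in method.
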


This criterion is useful because one can directly check whether two monomials are related in the poset by checking the subsequent inequalities.  Using Maple\cite{maple}, the above criterion can be used to find all partial order relationships between monomials.   Stembridge's poset package for Maple \cite{stembridgemaple} is used to find the minimal covering relationships for these monomials and thereby creating the poset for quintic monomials.  The code for this entire procedure is given in appendix~\ref{appendix:posetcode}.  The figure for this poset structure is given in figure~\ref{fig:calabiyau} at the end of the paper.

\subsection{Combinatorics of Non-Stable Families}

The poset structure on quintic monomials greatly simplifies the Hilbert-Mumford criterion analysis on quintic polynomials $f$.  For a fixed normalized 1-PS $\lambda$ and a monomial $[i_0,i_1,i_2,i_3,i_4]$, $\mu([i_0,i_1,i_2,i_3,i_4], \lambda)$ does not change when any monomials below it in the poset are added to $[i_0,i_1,i_2,i_3,i_4]$. For a fixed normalized 1-PS $\lambda$, $\maxsemistabfam$ ($\maxunstabfam$) represent the set of monomials in the poset where $\mu \leq 0$ ($\mu <0$).  By the Hilbert-Mumford criterion, the monomials of every non-stable quintic polynomial $f$, up to coordinate transformation, belong to a family of the form $\maxsemistabfam$.  The \textit{maximal non-stable families}, denoted $SSk$, are the largest possible families of the form $\maxsemistabfam$.  The corresponding $\lambda$ of $\maxsemistabfam$ is called the family's \textit{destabilizing 1-PS}.  From the poset structure, there will be a finite number of maximal non-stable families $SSk$.  Any non-stable quintic form $f$, up to coordinate transformation; will be long to one of these families.  
\par

The procedure for determining the set of maximal non-stable families would be to start from the top monomial and work down the poset until one finds a monomial $[i_0,i_1,i_2,i_3,i_4]$ which has a normalized 1-PS $\lambda$ where $\mu([i_0,i_1,i_2,i_3,i_4], \lambda) \leq 0$.  By restricting to normalized $\lambda$ one can use linear programming to determine whether such a $\lambda$ exists for a particular monomial.  The linear programming script is given in appendix~\ref{appendix:linearprogram}.  Using this procedure it is determined that the top most monomials which have $\mu \leq 0$ are [3,0,0,2,0], [4,0,0,0,1], [2,0,3,0,0], and [1,4,0,0,0], the poset of monomials below these top monomials are denoted SS1, SS2, SS3, and SS4.  The figures for these posets are given in figures~\ref{fig:SS1},~\ref{fig:SS2},~\ref{fig:SS3}, and~\ref{fig:SS4} at the end of the paper.  The destabilizing 1-PS $\lambda$ is given in table 1.  Other maximal non-stable families are found by finding the top-most monomials in the families $SS1 \hyph SS4$ which have a common destabalizing 1-PS $\lambda$.  There are three other such families denoted $SS5 \hyph SS7$.  These three families have multiple maximal monomials.  The figures for these posets are given in figures~\ref{fig:SS5a},~\ref{fig:SS5b},~\ref{fig:SS6a},~\ref{fig:SS6b},~\ref{fig:SS7a} and~\ref{fig:SS7b} at the end of the paper.  
\par
The combinatorial procedure above determines the maximal families $\maxsemistabfam$ where $\mu \leq 0$.  By the Hilbert-Mumford criterion, every semistable or unstable hypersurface will transform, via a coordinate transformation, into one of maximal non-stable families $\maxsemistabfam$.  

\begin{remark}
$q_{a,b}(x_m,x_n \parallel x_o,x_p)$ represents polynomials which are a linear combination of degree $a$ monomials in $x_m$ and $x_n$ multiplied by a degree $b$ monomials in $x_o$ and $x_p$
\end{remark}

\begin{prop}
$X$ is non-stable if and only if it belongs, via a coordinate transformation, to a hypersurface in families found in table 1.
\end{prop}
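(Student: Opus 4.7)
The plan is to prove this as a direct corollary of the Hilbert--Mumford criterion together with the combinatorial enumeration of maximal non-stable families just completed. The argument has two directions, and both reduce to checking that the list $SS1$--$SS7$ in Table 1 is exhaustive.

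For the easy direction, suppose $X$ belongs (up to coordinate change) to a family $SSk$ with destabilizing 1-PS $\lambda_k$ listed in Table 1. By construction, $SSk$ is a subset of $\maxsemistabfam$ for $\lambda_k$, so every monomial $[i_0,i_1,i_2,i_3,i_4]$ appearing in $f$ satisfies $\mu([i_0,i_1,i_2,i_3,i_4],\lambda_k)\le 0$. Taking the max over all monomials of $f$ gives $\mu(f,\lambda_k)\le 0$, and the Hilbert--Mumford criterion then yields that $f$ is non-stable.

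For the converse, let $X$ be non-stable. The Hilbert--Mumford criterion supplies a 1-PS $\lambda'$ with $\mu(f,\lambda')\le 0$. Using the $G$-equivariance \eqref{eq:gequivariant}, I would conjugate by a suitable $g\in SL(5,\CC)$ to replace $\lambda'$ with a 1-PS lying in the standard maximal torus, and after permuting coordinates arrange that it is a normalized 1-PS $\lambda$, at the cost of replacing $f$ by a coordinate transform $gf$. Then every monomial of $gf$ lies in $\maxsemistabfam$. Since the defining inequalities of $\maxsemistabfam$ are linear in the exponents and weights, $\maxsemistabfam$ is a downward-closed set in the quintic monomial poset, hence contained in the downward closure of its maximal monomials.

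The main step, and the one I expect to be the real obstacle, is showing that this $\maxsemistabfam$ is contained in one of the seven families $SS1$--$SS7$. This is exactly what the preceding combinatorial search accomplishes: starting from the top of the poset and descending until the first monomial with $\mu\le 0$ appears (tested by the linear program of Appendix~\ref{appendix:linearprogram}) produces the four top monomials $[3,0,0,2,0]$, $[4,0,0,0,1]$, $[2,0,3,0,0]$, $[1,4,0,0,0]$, yielding $SS1$--$SS4$; the further search for common destabilizing 1-PS among top monomials of these four families produces the remaining $SS5$--$SS7$. So I would argue that the top monomial(s) of the poset generated by $\maxsemistabfam$ must coincide with, or be dominated by, one of the maximal top monomials exhibited in $SS1$--$SS7$: otherwise, the combinatorial procedure would have produced an additional maximal family, contradicting exhaustiveness of the enumeration. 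Thus $f$, after coordinate transformation, belongs to some $SSk$ from Table 1, completing the proof. The bulk of the work is the verification that the enumeration is complete, which rests on the finiteness of the poset and the exhaustive Maple computation described above.
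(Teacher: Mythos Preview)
Your proposal is correct and follows essentially the same approach as the paper: the proposition is stated there as a direct consequence of the combinatorial enumeration of maximal non-stable families via the Hilbert--Mumford criterion, with no separate proof block, and your write-up simply makes explicit the two directions (the destabilizing 1-PS for each $SSk$ gives non-stability, and conversely $G$-equivariance reduces to a normalized 1-PS whose $\maxsemistabfam$ is caught by the exhaustive poset/linear-programming search). You have identified the content accurately, and the only substantive input---completeness of the list $SS1$--$SS7$---is exactly what the paper delegates to the Maple computation described in the preceding paragraphs.
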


\begin{table}[htdp]  
\centering
\renewcommand{\arraystretch}{1.00}
\scalebox{0.95}{
    \begin{tabular}{ | c | c | c | c | }
    \hline
    \emph{Family} & \emph{Destabilizing 1-PS Subgroup} & \emph{Maximal Monomial}    & \emph{Degeneration} \\ \hline
    
    SS1 & $\langle 2, 2, 2, -3, -3 \rangle$ & $[3,0,0,2,0]$  & $MO \hyph A$ \\ \hline
    
   \multicolumn{4}{| p{15cm} |}{$q_{3,2}(x_0,x_1,x_2 \parallel x_3,x_4) + q_{2.3}(x_0,x_1,x_2 \parallel x_3,x_4)+q_{1,4}(x_0,x_1,x_2 \parallel x_3,x_4)+q_5(x_3,x_4)$} \\ 
    \hline \hline
    
       SS2 & $\langle 1, 1, 1, 1, -4 \rangle$ & $[4,0,0,0,1]$  & $MO \hyph D$ \\ \hline 
    
    \multicolumn{4}{|p{15cm}|}{$x_4q_4(x_0,x_1,x_2,x_3,x_4)$} \\
    \hline \hline
  
        SS3 & $\langle 3, 3, -2, -2, -2 \rangle$ & $[2,0,3,0,0]$  & $MO \hyph A$ \\ \hline
    
    \multicolumn{4}{|p{15cm}|}{$q_{2,3}(x_0,x_1 \parallel x_2,x_3,x_4)+q_{1,4}(x_0,x_1  \parallel x_2,x_3,x_4)+q_5(x_2,x_3,x_4)$} \\ 
    \hline \hline

     SS4 & $\langle 4, -1, -1, -1, -1 \rangle$ & $[1,4,0,0,0]$  & $MO \hyph D$ \\ \hline
    
   \multicolumn{4}{|p{15cm}|}{$x_0q_4(x_1,x_2,x_3,x_4)+q_5(x_1,x_2,x_3,x_4)$} \\ 
    \hline \hline

    SS5 & $\langle 1, 0, 0, 0, -1 \rangle$ & $[0,5,0,0,0], [1,3,0,0,1], [2,1,0,0,2]$  & $MO \hyph B$ \\ \hline 
    
    \multicolumn{4}{|p{15cm}|}{$ x_0^2 ( x_4^2q_1(x_1,x_2,x_3,x_4) ) + x_0x_4q_3(x_1,x_2,x_3,x_4) + q_5(x_1,x_2,x_3,x_4)$} \\ 
    \hline \hline

    SS6 & $\langle 4, 4, -1, -1, -6 \rangle$ & $[1,0,4,0,0], [3,0,0,0,2], [2,0,2,0,1] $  &  $MO \hyph C$ \\ \hline 
    
    \multicolumn{4}{|p{15cm}|}{$  x_4^2q_{3} (x_0,x_1)   +   x_4 q_{2,2}(x_0,x_1 \parallel x_2,x_3,x_4)  + q_{1,4}(x_0,x_1 \parallel x_2,x_3,x_4) + q_5(x_2,x_3,x_4)$} \\ 
    \hline \hline

    SS7 & $\langle 6, 1, 1, -4, -4 \rangle$ & $[0,4,0,1,0] [1,2,0,2,0] [2,0,0,3,0]   $ & $MO \hyph C$ \\ \hline 
    
    \multicolumn{4}{|p{15cm}|}{$ x^{2}_0q_3(x_3,x_4) +x_0  ( q_{2,2}(x_1,x_2 \parallel x_3,x_4)+q_{1,3}(x_1,x_2 \parallel x_3,x_4)+q_4(x_3,x_4) )  +  q_{4,1}(x_1,x_2 \parallel x_3,x_4)+q_{3,2}(x_1,x_2 \parallel x_3,x_4) +q_{2,3}(x_1,x_2 \parallel x_3,x_4)+q_{1,4}(x_1,x_2 \parallel x_3,x_4)+q_5(x_3,x_4). $ } \\ 
    \hline \hline

    \end{tabular}
\label{maxfam}

}
	\vspace{0.1cm}
\caption{Nonstable Families SS1 - SS7}
\end{table}

For any non-stable $f$, the $SL(5,\CC)$-orbit will not necessarily be closed.  Using the destabilizing 1-PS $\lambda$, the closure $\overline{\lambda f}$ = $f_0$ is a quintic form invariant with respect to $\lambda$.  The forms $f$ and $f_0$ will map to the same point in the GIT quotient.  If the orbit of $f_0$ is closed then it is a minimal orbit. 
\begin{eqnarray}
\label{eq:MO-A}  
 MO \hyph A:& q_{2,3}(x_0,x_1 \parallel x_2 , x_3, x_4), \\
\label{eq:MO-B}  
 MO \hyph B:& q_5(x_1,x_2,x_3)+x_0x_4q_3(x_1,x_2,x_3)+x_0^2x_4^2q_1(x_1,x_2,x_3), \\
\label{eq:MO-C}  
 MO \hyph C:& q_{1,4}(x_0,x_1 \parallel x_2,x_3) + x_4q_{2,2}(x_0,x_1 \parallel x_2,x_3) + x_4^2q_3(x_0,x_1), \\
\label{eq:MO-D}  
 MO \hyph D:& x_0q_4(x_1,x_2,x_3,x_4).
\end{eqnarray}

In Section~\ref{sec:minimalorbit}, it will be shown that a generic member of one of the families $MO \hyph A - MO \hyph D$ will represent a minimal orbit.  Certain hypersurfaces in $MO \hyph A - MO \hyph D$ will degenerate further into a member of one of the families $MO2 \hyph I - MO2 \hyph X$.

\begin{eqnarray}
\label{eq:MO2-I}  
 MO2 \hyph I:&  x_0^2  x_2x_4^2    + x_0x_1  x_2x_3x_4         + x_1^2  x_2x_3^2   , \\
\label{eq:MO2-II}  
 MO2 \hyph II:&  x_0^2  x_3x_4^2   + x_0x_1 (  x_3^3 + x_2x_3x_4 ) + x_1^2  x_2^2x_3  , \\
\label{eq:MO2-III}  
 MO2 \hyph III:&  x_0^2 ( x_2x_4^2 + x_3^2x_4  ) + x_0x_1 (  x_3^3 + x_2x_3x_4 ) + x_1^2 ( x_2x_3^2 + x_2^2x_4 ), \\
\label{eq:MO2-IV}  
 MO2 \hyph IV:& x_0x_1x_4  q_2(x_2,x_3) , \\
\label{eq:MO2-V}  
 MO2 \hyph V:& x_0x_1  x_2x_3x_4 , \\
\label{eq:MO2-VI}  
 MO2 \hyph VI:&     x_1x_2^3x_3 + x_1^2x_2x_3^2    + x_0 x_1  x_2x_3x_4  , \\
\label{eq:MO2-VII}  
 MO2 \hyph VII:& x_4x_0^2x_3^2 + x_4x_0x_1x_2x_3 + x_4x_1^2x_2^2  , \\
\label{eq:MO2-VIII}  
 MO2 \hyph VIII:&  x_0x_1q_3(x_2,x_3,x_4), \\
\label{eq:MO2-IX}  
 MO2 \hyph IX:& x_0q_{2,2}(x_1,x_2 \parallel x_3,x_4) , \\
\label{eq:MO2-X}  
 MO2 \hyph X:& x_0  (q_4(x_2,x_3) + x_1x_4q_2(x_2,x_3) + x_1^2x_4^2 ). 
 \end{eqnarray}

\subsection{Bad Flags}

The maximal non-stable families will be characterized in terms of singularities found on a generic member of one of these families.  A destabilizing 1-PS $\lambda$ has an associated ``bad flag'' of the vector spaces $H^0(\PP^4,\Oh_{\PP^4}(1)) \cong \CC^5$.  A general principal, given by Mumford \cite{mumford}, states that these ``bad flags'' pick out the singularities which cause the family to become semistable or unstable.  

\par

Using the approach given by Laza \cite{laza} it can be shown that a 1-PS $\lambda: \CC^* \to T$ gives a weight decomposition of $H^0(\PP^4,\Oh_{\PP^4}(1)) = \oplus_{i=0}^{5} W_i$ based on the eigenvalues of $\lambda$ acting on $H^0(\PP^4,\Oh_{\PP^4}(1))$. 

\begin{definition} For a 1-PS $\lambda = \langle a,b,c,d,e \rangle$ let $m_i$ be a subset of $\left\{ a,b,c,d,e \right\}$ which have the same weights and let $n_i$ be the weight.  
\begin{equation} W_{m_i} := \bigoplus_{\text{$i$ where $W_i$ has eigenvalue $n_i$}} W_i \end{equation}

\end{definition}

The standard flag is

\begin{equation} \label{eq:standardflagvariety} \begin{split}  \emptyset \subseteq   F_1 = ( x_1=x_2&=x_3=x_4=0 ) \subseteq  F_2 = ( x_2=x_3=x_4=0 ) \subseteq \\ & F_3 = ( x_3=x_4=0 ) \subseteq F_4 = (x_4=0) \subseteq \PP^4.  \end{split} \end{equation}

\begin{definition} Given a 1-PS $\lambda = \langle a,b,c,d,e \rangle$ let $m_1$, $m_2 \ldots$, $m_s$ represent the collection of common weights of $\lambda$.  Let $m_i$ be ordered by increasing value of weights (i.e. $m_1$ has lowest weight).  The \textit{associated flag for $\lambda$} is

\begin{equation} \label{eq:1psflagvariety} F_{\lambda}:\ \  \emptyset \subseteq  F_{m_{s}}:= \bigoplus_{i=1}^{s} W_{m_i} \subset F_{m_{s-1}}:= \bigoplus_{i=1}^{s-1} W_{m_i} \subset \ldots  \subseteq F_{m_{1}}:= W_{m_1} \subseteq \PP^4. \end{equation}

This is a subflag of the standard flag (\ref{eq:1psflagvariety}).

\end{definition}

For the maximal destabilizing families $SS1 \hyph SS7$ the associated ``bad flags'' are

\begin{table}[htdp]
\begin{center}
\renewcommand{\arraystretch}{1.25}
\scalebox{.93}{
    \begin{tabular}{ | c | c | c | }
    \hline
    \emph{Family} & \emph{Destabilizing 1-PS Subgroup} & \emph{Destabilizing Flag $F_\lambda$} \\ 
    \hline
    SS1 & $\langle 2, 2, 2, -3, -3 \rangle$ & $ \emptyset \subseteq (x_3=x_4=0)  \subseteq \PP^4$ \\ \hline
    
       SS2 & $\langle 1, 1, 1, 1, -4 \rangle$ & $ \emptyset \subseteq (x_4=0)  \subseteq \PP^4      $ \\ \hline 
    
      SS3 & $\langle 3, 3, -2, -2, -2 \rangle$ & $ \emptyset \subseteq (x_2=x_3=x_4=0)  \subseteq \PP^4    $ \\ \hline
    
     SS4 & $\langle 4, -1, -1, -1, -1 \rangle$ & $ \emptyset \subseteq (x_1=x_2=x_3=x_4=0)  \subseteq \PP^4    $ \\ \hline
    
   SS5 & $\langle 1, 0, 0, 0, -1 \rangle$ & $ \emptyset \subseteq (x_1=x_2=x_3=x_4=0) \subseteq (x_4=0)  \subseteq \PP^4 $ \\ \hline 
    
    SS6 & $\langle 4, 4, -1, -1, -6 \rangle$ & $ \emptyset \subseteq (x_2=x_3=x_4=0) \subseteq (x_4=0)  \subseteq \PP^4      $ \\ \hline 
    
 SS7 & $\langle 6, 1, 1, -4, -4 \rangle$ & $ \emptyset \subseteq (x_1=x_2=x_3=x_4=0) \subseteq (x_3=x_4=0)  \subseteq \PP^4 .   $ \\ \hline 

    \hline

    \end{tabular}
    }
    	\vspace{0.1cm}
\caption{Destabilizing Flags of SS1-SS7}
\end{center}
\end{table}

\subsection{Geometric Interpretation of Maximal Semistable Families} \label{sec:geointermaxsemi}

In order to determine the singularities which occur on threefolds in families $SS1 \hyph SS7$, we intersect the general form of the equation with its associated destabilizing flag.  This will give some description of the types of singularities, which occur in each family.  A precise description of each such family is given in the propositions below.  Some of the singularity analysis is based on describing the tangent cone and intersection multiplicities of the tangent cone at singular points, a detailed introduction of these topics is given in Beltrametti et al. (\cite{beltrametti} ch.5)

\begin{prop}
A hypersurface $X$ is of type SS1 if and only if $X$ contains a double plane.
\end{prop}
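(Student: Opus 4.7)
The plan is to translate both sides of the equivalence into the same concrete algebraic condition, namely membership in the square of the ideal of a plane, and then read off the result. Recall that SS1 is the family associated to the 1-PS $\lambda = \langle 2,2,2,-3,-3\rangle$. For a monomial $x_0^{i_0}x_1^{i_1}x_2^{i_2}x_3^{i_3}x_4^{i_4}$ one computes
\[
\mu([i_0,i_1,i_2,i_3,i_4],\lambda) \;=\; 2(i_0+i_1+i_2)-3(i_3+i_4) \;=\; 10 - 5(i_3+i_4),
\]
so the condition $\mu \leq 0$ is equivalent to $i_3+i_4 \geq 2$. Thus, up to the $SL(5,\CC)$-action, a quintic form $f$ belongs to SS1 if and only if every monomial appearing in $f$ has at least two factors from $\{x_3,x_4\}$, which is exactly the statement that $f$ lies in the ideal $(x_3,x_4)^2$. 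This matches the explicit form in Table~1: all four summands $q_{3,2}$, $q_{2,3}$, $q_{1,4}$, and $q_5(x_3,x_4)$ contribute only monomials in $(x_3,x_4)^2$.

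Next I would make precise the geometric side. A plane $P \subset \PP^4$ has a linear ideal $I(P)=(\ell_1,\ell_2)$ generated by two independent linear forms. To say that the hypersurface $X=V(f)$ \emph{contains $P$ as a double plane} means that $f \in I(P)^2$, equivalently, that $P$ appears with multiplicity at least two in the zero cycle of $f$ restricted to a transverse plane. Since the standard maximal torus acts on the Grassmannian of planes and $SL(5,\CC)$ acts transitively on planes, we may, after a coordinate transformation, take $P=(x_3=x_4=0)$, so that $I(P)=(x_3,x_4)$.

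With these two reformulations in hand, the equivalence is immediate. If $X$ contains a double plane, choose coordinates so that $P=(x_3=x_4=0)$; then $f \in (x_3,x_4)^2$, so every monomial of $f$ has $i_3+i_4\geq 2$ and hence $f$ lies in the SS1 family with destabilizing 1-PS $\lambda=\langle 2,2,2,-3,-3\rangle$. Conversely, if $X$ is of type SS1, then after a coordinate change every monomial of $f$ satisfies $i_3+i_4\geq 2$, so $f \in (x_3,x_4)^2$, which says precisely that $X$ contains the plane $(x_3=x_4=0)$ as a double plane.

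There is no real obstacle here: the entire content is the arithmetic weight calculation identifying the SS1 poset with the monomials in $(x_3,x_4)^2$, combined with the standard fact that $f \in I(P)^2$ is the algebraic avatar of ``$X$ contains $P$ as a double plane.'' The only mild care needed is in justifying the reduction to the standard flag, which is allowed by the $G$-equivariance \eqref{eq:gequivariant} of $\mu$ and the transitivity of $SL(5,\CC)$ on planes in $\PP^4$.
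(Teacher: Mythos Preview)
Your proof is correct and follows the same approach as the paper: both directions reduce to the observation that, after a coordinate change sending the plane to $(x_3=x_4=0)$, membership in SS1 is exactly the condition $f\in(x_3,x_4)^2$. Your explicit weight computation $\mu=10-5(i_3+i_4)$ makes transparent why the SS1 poset coincides with the monomials in $(x_3,x_4)^2$, a point the paper leaves implicit by simply quoting the general form~(\ref{SS1}).
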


\begin{proof}
\indent
Let $X$ be of type $S1$ then it is equivalent, via a coordinate transformation, to the hypersurface 
\begin{equation}
\begin{split}
q_{3,2}(x_0,x_1,x_2 \parallel x_3,x_4) + q_{2.3}(x_0,x_1,x_2 \parallel x_3,x_4) \\ + q_{1,4}(x_0,x_1,x_2 \parallel x_3,x_4)+q_5(x_3,x_4).  
\end{split}
\label{SS1}
\end{equation}
This hypersurface contains the ideal $\langle x_3,x_4 \rangle^2$ which is a double plane in $\PP^4$.

\indent

Let $X$ be a hypersurface which contains a double plane.  By a coordinate transformation we can assume the double plane is $\langle x_3,x_4 \rangle^2$.  The most general equation which contains the ideal $\langle x_3,x_4 \rangle^2$ is (\ref{SS1}).

\end{proof}

\begin{prop}
A hypersurface $X$ is of type SS2 if and only if $X$ is a reducible variety, where a hyperplane is one of the components.  In particular, the singularity is the intersection of the hyperplane with the other component, which is generically a degree 4 surface.
\end{prop}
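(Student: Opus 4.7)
The plan is to mirror the argument used in the SS1 proposition, exploiting the very explicit shape of the SS2 family as read off from table~1. Concretely, the SS2 entry tells us that $X$ is of type SS2 precisely when, after an $SL(5,\CC)$ coordinate transformation, its defining equation is of the form $x_4\, q_4(x_0,x_1,x_2,x_3,x_4)$, and this factored form is exactly what makes each direction of the biconditional essentially a one-line observation.

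For the forward direction, I would start from a hypersurface of type SS2, apply a coordinate change to put its equation in the table's normal form $x_4\, q_4(x_0,x_1,x_2,x_3,x_4)$, and simply read off the decomposition: the zero locus is the union of the hyperplane $H = \{x_4 = 0\}$ and the quartic hypersurface $Q = \{q_4 = 0\}$. For generic choices of the coefficients, $q_4$ cuts out a smooth quartic threefold in $\PP^4$, so $Q$ is a genuine degree four component and not further degenerate. For the converse, if $X$ is reducible with a hyperplane component, then up to $SL(5,\CC)$ we may take that hyperplane to be $\{x_4 = 0\}$, and the defining quintic $f$ must factor as $x_4 \cdot g$ with $\deg g = 4$; this is by definition a member of the SS2 family.

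The singularity statement is then a direct application of the fact that for a reducible hypersurface $V(fg)$, the singular locus always contains $V(f) \cap V(g)$ (since the partial derivatives of $fg$ all vanish on the intersection by the product rule). Taking $f = x_4$ and $g = q_4$, the singular locus contains $H \cap Q$, which is generically a smooth quartic surface cut out of the hyperplane $H \cong \PP^3$ by the restriction $q_4|_{x_4=0}$. I would note that for a generic quartic this is the entire singular locus of $X$.

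I do not expect any substantive obstacle here, since both the algebra and the geometry are immediate once the normal form from table~1 is in hand; the only minor subtlety is justifying the word \emph{generically}, which I would handle by pointing out that the coefficients of $q_4$ vary in an open dense subset on which $q_4$ is irreducible of degree exactly four and meets $\{x_4=0\}$ transversely, so that $H \cap Q$ is a smooth quartic surface rather than something lower-dimensional or further reducible.
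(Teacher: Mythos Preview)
Your proposal is correct and follows essentially the same argument as the paper: both directions amount to putting the hyperplane component in the form $\{x_4=0\}$ and reading off the factorization $f = x_4\, q_4$. In fact you go slightly further than the paper, which states the singularity claim in the proposition but does not justify it in the proof; your product-rule observation and genericity remark fill that gap cleanly.
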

\begin{proof}
\indent
Let $X$ be of type $S2$ then it is equivalent, via a coordinate transformation, to the hypersurface 

\begin{equation}
\begin{split}
x_4q_4(x_0,x_1,x_2,x_3,x_4)
\end{split}
\label{SS2}
\end{equation}
This hypersurface has the hyperplane $\langle x_4 \rangle$ as a component.

\par

Let $X$ be a reducible hypersurface where a hyperplane is a component.  The polynomial $f \in \CC[x_0,x_1,x_2,x_3,x_4]$ defining $X$ can be factored into $f = gh$, where $h$ is a degree 1 polynomial.  By a coordinate transformation we can map the hyperplane defining $h$ to $x_4$.  Without loss of generality $f=x_4h$. Since since $f$ is of degree 5 then by neccesity $h$ is of degree 4 therefore $f$ is of the form (\ref{SS2}).

\end{proof}

\begin{prop}
A hypersurface $X$ is of type SS3 if and only if $X$ contains a triple line.
\end{prop}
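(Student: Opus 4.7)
The plan is to follow the template of the two preceding propositions: identify the family SS3 with the quintic forms lying in the cubed ideal $\langle x_2,x_3,x_4\rangle^3$, and then interpret this condition geometrically as the statement that the defining polynomial vanishes to order at least three along the line $L = V(x_2,x_3,x_4)$, which is what it means for $X$ to contain the triple line $L$. The bad flag for SS3, namely $\emptyset \subseteq (x_2=x_3=x_4=0) \subseteq \PP^4$, singles out exactly this line, so it is the natural candidate.

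For the forward direction, suppose $X$ is of type SS3. After a coordinate transformation, $X$ is cut out by a polynomial
\[
f = q_{2,3}(x_0,x_1 \parallel x_2,x_3,x_4) + q_{1,4}(x_0,x_1 \parallel x_2,x_3,x_4) + q_5(x_2,x_3,x_4).
\]
Every monomial of each summand has total degree at least $3$ in $\{x_2,x_3,x_4\}$, so $f \in \langle x_2,x_3,x_4\rangle^3$, and the line $L = V(x_2,x_3,x_4)$ is a triple line of $X$.

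For the converse, suppose $X$ contains a triple line $L$. Applying an $SL(5,\CC)$ coordinate transformation, we may assume $L = V(x_2,x_3,x_4)$. The triple-line condition is equivalent to the defining form $f$ lying in $\langle x_2,x_3,x_4\rangle^3$, which in turn forces each monomial of $f$ to have degree at most $2$ in $x_0,x_1$. Collecting terms by their degree in $x_0,x_1$ produces the decomposition displayed above, so $X$ belongs to SS3.

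The only step that requires care is the ideal-theoretic translation of ``$X$ contains a triple line $L$'' into ``$f\in I_L^3$'' for the linear ideal $I_L$ cutting out $L$; once that equivalence is recorded, the rest of the argument is bookkeeping via the bidegree splitting $\{x_0,x_1\} \sqcup \{x_2,x_3,x_4\}$, and I do not anticipate any genuine obstacle.
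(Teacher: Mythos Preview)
Your proof is correct and follows exactly the paper's approach: both directions hinge on identifying the family SS3 with the condition $f\in\langle x_2,x_3,x_4\rangle^3$ after a coordinate change, and then reading off that the most general such $f$ has the displayed bidegree decomposition. The paper's argument is slightly terser but otherwise identical.
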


\begin{proof}
\indent
Let $X$ be of type $SS3$ then it is equivalent, via a coordinate transformation, to the hypersurface 
\begin{equation}
\begin{split}
q_{2,3}(x_0,x_1 \parallel x_2,x_3,x_4)+q_{1,4}(x_0,x_1  \parallel x_2,x_3,x_4)+q_5(x_2,x_3,x_4)
\end{split}
\label{SS3}
\end{equation}
This hypersurface contains the ideal $\langle x_2,x_3,x_4 \rangle^3$ which is a triple line in $\PP^4$.

\indent

Let $X$ be a hypersurface which contains a triple line.  By a coordinate transformation, we can assume the triple line is $\langle x_2,x_3,x_4 \rangle^3$.  The most general equation which contains the ideal $\langle x_2,x_3,x_4 \rangle^3$ is (\ref{SS3}).

\end{proof}

\begin{prop}
A hypersurface $X$ is of type $SS4$ if and only if $X$ contains a quadruple point.
\end{prop}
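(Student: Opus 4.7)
The plan is to mirror the symmetric pattern used in the preceding three propositions: on one side, inspect the explicit SS4 form and exhibit a geometric feature; on the other, translate the geometric feature into a monomial restriction via an $SL(5,\CC)$ coordinate change.

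For the forward direction, I would start with a hypersurface $X$ given by $f = x_0 q_4(x_1,x_2,x_3,x_4) + q_5(x_1,x_2,x_3,x_4)$, pick out the distinguished point $p = [1:0:0:0:0]$ singled out by the destabilizing flag $\emptyset \subseteq (x_1=x_2=x_3=x_4=0) \subseteq \PP^4$, and verify that $p$ is a quadruple point. Passing to the affine chart $x_0 = 1$ around $p$ gives the local equation $q_4(x_1,\ldots,x_4) + q_5(x_1,\ldots,x_4)$; since every monomial has degree at least $4$ in $(x_1,\ldots,x_4)$, the multiplicity of $X$ at $p$ is at least $4$, so $p$ is a quadruple point.

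For the reverse direction, I would take a quintic $X$ containing a quadruple point $p$ and use the $SL(5,\CC)$-action to move $p$ to $[1:0:0:0:0]$. Writing the defining polynomial as $f = \sum c_{i_0 \ldots i_4}\, x_0^{i_0} x_1^{i_1} x_2^{i_2} x_3^{i_3} x_4^{i_4}$ and dehomogenizing at $x_0 = 1$, the quadruple-point condition is exactly that every monomial appearing in the affine expansion has total degree at least $4$ in $(x_1,\ldots,x_4)$. Equivalently, each nonzero coefficient has $i_1+i_2+i_3+i_4 \geq 4$, forcing $i_0 \leq 1$. Collecting monomials by $x_0$-degree then produces $f = x_0 q_4(x_1,\ldots,x_4) + q_5(x_1,\ldots,x_4)$, which is the SS4 form.

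No step presents a serious obstacle; the argument is parallel to the SS2 and SS3 cases above. The only mild subtlety is to state cleanly the equivalence between ``$p$ is a quadruple point'' and the vanishing of the local equation through order $3$ at $p$, so that the monomial restriction $i_0 \leq 1$ is both necessary and sufficient. Once that is in hand, both directions reduce to inspection of the homogeneous form, exactly as in the propositions for SS1--SS3.
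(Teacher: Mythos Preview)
Your proposal is correct and matches the paper's own proof essentially verbatim: the paper phrases the quadruple-point condition as containment of the ideal $\langle x_1,x_2,x_3,x_4\rangle^4$, while you phrase it via dehomogenization and multiplicity, but these are the same statement. Both directions then reduce to inspecting the homogeneous form, exactly as you outline.
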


\begin{proof}
\indent
Let $X$ be of type $SS4$ then it is equivalent, via a coordinate transformation, to the hypersurface 
\begin{equation}
\begin{split}
x_0q_4(x_1,x_2,x_3,x_4)+q_5(x_1,x_2,x_3,x_4)
\end{split}
\label{SS4}
\end{equation}
This hypersurface contains the ideal $\langle x_1,x_2,x_3,x_4 \rangle^4$ which is a quadruple point in $\PP^4$.

\par

Let $X$ be a hypersurface which contains a quadruple point.  By a coordinate transformation we can assume the quadruple point is $\langle x_1,x_2,x_3,x_4 \rangle^4$.  The most general equation which contains the ideal $\langle x_1,x_2,x_3,x_4 \rangle^4$ is (\ref{SS4}).

\end{proof}

\begin{prop}
A hypersurface $X$ is of type $SS5$ if and only if $X$ has a triple point $p$ with the following properties:
\begin{itemize}
\item [i)] the tangent cone of $p$ is the union of a double plane and another hyperplane;
\item [ii)] the line connecting a point in the double plane with the triple point has intersection multiplicty 5 with the hypersurface.
\end{itemize}
\end{prop}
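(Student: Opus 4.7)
The plan is to follow the two-direction template already used for SS1--SS4: the ``only if'' direction is a direct local computation at the distinguished point $p = [1:0:0:0:0]$, and the ``if'' direction reduces by $SL(5,\CC)$ to the same normalized position and then matches coefficients to recover the SS5 shape.

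For ``only if'', I would take $f$ to be the general SS5 polynomial
\[
f = x_0^2 x_4^2 q_1(x_1,x_2,x_3,x_4) + x_0 x_4 q_3(x_1,x_2,x_3,x_4) + q_5(x_1,x_2,x_3,x_4),
\]
and work in the affine chart $x_0 = 1$. The lowest-degree piece of $f$ in $(x_1,x_2,x_3,x_4)$ is the cubic $x_4^2 q_1$, so $p$ has multiplicity $3$ and its tangent cone $x_4^2 q_1$ decomposes as the double plane $\{x_4=0\}$ together with the hyperplane $\{q_1=0\}$, verifying (i). For (ii), I would parameterize the line through $p$ and a generic point $[0:a:b:c:0]$ of the double plane by $[1:ta:tb:tc:0]$ and substitute into $f$; since the first two summands carry a factor of $x_4$, they vanish identically along the line, leaving $t^5 q_5(a,b,c,0)$, which gives intersection multiplicity $5$ at $p$ whenever $q_5(a,b,c,0) \neq 0$.

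For ``if'', I would first move the triple point $p$ to $[1:0:0:0:0]$ by an $SL(5,\CC)$-transformation and write the defining polynomial as $f = x_0^2 g_3 + x_0 g_4 + g_5$, with $g_i$ homogeneous of degree $i$ in $x_1,\ldots,x_4$; the triple-point hypothesis is exactly the vanishing of the lower $g_i$'s, and $g_3$ is the tangent cone. Condition (i) lets me factor $g_3 = \ell^2 m$ for linear forms $\ell, m$, and a further linear change of coordinates in $x_1,\ldots,x_4$ (which preserves $p$) allows me to take $\ell = x_4$, so $g_3 = x_4^2 m$. The same line substitution as in the forward direction then gives $f = t^4 g_4(a,b,c,0) + t^5 g_5(a,b,c,0)$ along the line through $p$ and $[0:a:b:c:0]$, and (ii), applied for \emph{every} $(a,b,c)$, forces $g_4(x_1,x_2,x_3,0) \equiv 0$, i.e.\ $x_4 \mid g_4$. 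Writing $g_4 = x_4 q_3$ puts $f$ into exactly the SS5 normal form.

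The step that will need the most care is the interpretation of (ii): the conclusion $x_4 \mid g_4$ relies on the multiplicity condition for \emph{every} point of the double plane, not just one, in order to upgrade the vanishing along a single line to a polynomial divisibility statement. Once this is recognized, the remaining coordinate manipulations are entirely routine and parallel the arguments already used for SS1--SS4.
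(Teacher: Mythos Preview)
Your argument is correct and follows essentially the same route as the paper: in both directions you normalize the triple point to $[1:0:0:0:0]$, read off the tangent cone as the degree-$3$ coefficient of $x_0^2$, use a coordinate change fixing $p$ to send the double plane to $\{x_4=0\}$, and then use the intersection-multiplicity condition along lines in $\{x_4=0\}$ to force $x_4\mid g_4$. Your explicit parameterization and your remark that condition (ii) must hold for \emph{every} point of the double plane (to pass from vanishing along one line to the divisibility $x_4\mid g_4$) make precise exactly the step the paper phrases more tersely as ``a general line from the hyperplane $\langle x_4\rangle$''.
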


\begin{proof}
\indent
Let $X$ be of type $SS5$ then it is equivalent, via a coordinate transformation, to the hypersurface 
\begin{equation}
\begin{split}
x_0^2 \bigg( x_4^2q_1(x_1,x_2,x_3,x_4) \bigg) + x_0x_4q_3(x_1,x_2,x_3,x_4) + q_5(x_1,x_2,x_3,x_4)
\end{split}
\label{SS5}
\end{equation}
This hypersurface contains the triple point $\langle x_1,x_2,x_3,x_4 \rangle^3$.  The tangent cone is the hypersurface defined by

\begin{equation}
\begin{split}
x_4^2q_1(x_1,x_2,x_3,x_4) 
\end{split}
\end{equation}

which is the union of a double hyperplane $\langle x_4 \rangle^2$ and another general hyperplane $q_1(x_1,x_2,x_3,x_4)$.  The points whose lines passing through the triple point which have intersection multiplicity 5 with the hypersurface, is the locus of $\langle x_4^2q_1(x_1,x_2,x_3,x_4) \rangle$ and $\langle x_4q_3(x_1,x_2,x_3,x_4) \rangle$.  Since $x_4$ is a component of both terms then a line emanating from the hyperplane $\langle x_4 \rangle$ to the triple point will have multiplicity 5. 

\par

Let $X$ be a hypersurface which contains a triple point.  By a coordinate transformation we can assume the triple point is $\langle x_1,x_2,x_3,x_4 \rangle^3$.  The most general equation which contains the ideal $\langle x_1,x_2,x_3,x_4 \rangle^3$ is

\begin{equation}
\begin{split}
x_0^2\bigg( q_3(x_1,x_2,x_3,x_4) \bigg) + x_0 \bigg( q_4(x_1,x_2,x_3,x_4) \bigg) + q_5(x_1,x_2,x_3,x_4).
\end{split}
\end{equation}

If the tangent cone is the union of a double plane and another hyperplane then

\begin{equation}
\begin{split}
q_3(x_1,x_2,x_3,x_4)=f^2g
\end{split}
\end{equation}

where $f$ and $g$ are linear forms.  By a coordinate transformation which keeps the triple point fixed we can map the hyperplane $f$ to $x_4$.  So without loss of generality

\begin{equation}
\begin{split}
q_3(x_1,x_2,x_3,x_4)=x_4^2g.
\end{split}
\end{equation}

If a general line from the hyperplane $\langle x_4 \rangle$ to the triple point has multiplicity 5 then 

\begin{equation}
\begin{split}
x_4=q_4=0.  
\end{split}
\end{equation}

This occurs only if $q_4$ has $x_4$ as a component so

\begin{equation}
\begin{split}
q_4=x_4q_3 
\end{split}
\end{equation}

which is precisely of the form (\ref{SS5}).

\end{proof}

\begin{prop}
A hypersurface $X$ is of type $SS6$ if and only if $X$ has a double line $L$ where every point $p \in L$ has the following properties:

\begin{itemize}
\item[i)] the tangent cone of each point $p \in L$ is a double plane $P_p$;
\item [ii)]  each point $p \in L$ has the same double plane tangent cone i.e. $P_p=P$ for some double plane $P$; 
\item [iii)] the line connecting the point on the tangent cone $P_p$ and a point $p \in L$ has intersection multiplicity 4 with the hypersurface.
\end{itemize}
\end{prop}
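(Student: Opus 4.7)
The plan is to prove both implications by computing the tangent cone and by examining the restriction of $f$ to the hyperplane $\{x_4=0\}$.

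For the forward direction, assume $f$ is given by the SS6 form. Each summand lies in the ideal $\langle x_2,x_3,x_4\rangle^2$, so $L:=V(x_2,x_3,x_4)$ is a double line of $X$. At a point $p=[a:b:0:0:0]\in L$, only the term $x_4^2 q_3(x_0,x_1)$ contributes degree-$2$ terms in the transverse coordinates, so the tangent cone of $X$ at $p$ equals $q_3(a,b)\,x_4^2$; for general $p$ this is (a scalar multiple of) the double hyperplane $\{x_4=0\}^2$, and it is the same for every $p$, giving (i) and (ii). For (iii), observe that any line from $p$ through a point of $P_p=\{x_4=0\}$ lies entirely in $\{x_4=0\}$; the restriction $f|_{x_4=0}=q_{1,4}(x_0,x_1\parallel x_2,x_3)+q_5(x_2,x_3)$ lies in $\langle x_2,x_3\rangle^4$, so every such line meets $X$ with multiplicity at least $4$ at $p$.

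For the reverse direction, after a coordinate change I may take $L=V(x_2,x_3,x_4)$, so $f\in\langle x_2,x_3,x_4\rangle^2$ and the most general such quintic is
\[ f = q_{3,2}(x_0,x_1\parallel x_2,x_3,x_4) + q_{2,3} + q_{1,4} + q_5(x_2,x_3,x_4). \]
The tangent cone at a generic $p=[a:b:0:0:0]\in L$ is precisely $q_{3,2}(a,b\parallel u_2,u_3,u_4)$ regarded as a quadratic form in the transverse variables. Writing $q_{3,2}=\sum_{i+j=3} x_0^i x_1^j A_{ij}(x_2,x_3,x_4)$ with each $A_{ij}$ a quadratic form, hypotheses (i) and (ii) say that for every $(a,b)$ the value $\sum a^i b^j A_{ij}$ is a scalar multiple of a fixed square $\ell(x_2,x_3,x_4)^2$. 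Specialising $(a,b)$ to four distinct projective values and solving a Vandermonde system shows each $A_{ij}$ is itself a scalar multiple of $\ell^2$, so $q_{3,2}=\ell^2\cdot q_3(x_0,x_1)$ for some cubic $q_3$. A linear change of coordinates preserving $L$ and the subspace $\langle x_0,x_1\rangle$ sends $\ell$ to $x_4$, yielding $q_{3,2}=x_4^2 q_3(x_0,x_1)$.

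Finally, (iii) applied to the line through $p$ with direction $(0,0,\gamma_2,\gamma_3,0)$, parametrised as $[a:b:t\gamma_2:t\gamma_3:0]$, gives $f(t)=t^3 q_{2,3}(a,b\parallel\gamma_2,\gamma_3,0)+O(t^4)$; requiring intersection multiplicity $\geq 4$ for all $(a,b,\gamma_2,\gamma_3)$ forces $q_{2,3}(x_0,x_1\parallel x_2,x_3,0)\equiv 0$, so $x_4$ divides $q_{2,3}$ and $q_{2,3}=x_4\, q_{2,2}(x_0,x_1\parallel x_2,x_3,x_4)$. Substituting back recovers the SS6 form. The main obstacle is the linear-algebraic step of the previous paragraph: one must use (ii) -- which fixes the plane $\ell$ uniformly across $p$ -- to promote the pointwise statement ``the tangent cone at $p$ is some double plane'' to the uniform factorisation $q_{3,2}=\ell^2 q_3(x_0,x_1)$. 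Without (ii) the linear form $\ell$ would be allowed to vary rationally with $(a,b)$, producing a strictly larger family than SS6.
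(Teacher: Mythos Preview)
Your proof is correct and follows essentially the same route as the paper: fix $L=V(x_2,x_3,x_4)$, identify the tangent cone at $p\in L$ with the degree-two piece $q_{3,2}(a,b\parallel\,\cdot\,)$, use (i)--(ii) to force $q_{3,2}=\ell^2 q_3(x_0,x_1)$ and normalise $\ell\mapsto x_4$, then use (iii) to force $x_4\mid q_{2,3}$. Your Vandermonde argument for the factorisation of $q_{3,2}$ and your explicit line parametrisation for the multiplicity step are more detailed than the paper's version, which simply asserts both deductions; otherwise the arguments coincide.
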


\begin{proof}
\indent
Let $X$ be of type $SS6$ then it is equivalent, via a coordinate transformation, to the hypersurface 
\begin{equation}
\begin{split}
\bigg( q_{3} (x_0,x_1)x_4^2 \bigg)  +     \bigg( x_4 q_{2,2}(x_0,x_1 \parallel x_2,x_3,x_4) \bigg) \\ + q_{1,4}(x_0,x_1 \parallel x_2,x_3,x_4) + q_5(x_2,x_3,x_4)
\end{split}
\label{SS6}
\end{equation}
This hypersurface contains the double line  $\langle x_2,x_3,x_4 \rangle^2$.  For any point $[\lambda: \nu: 0:0:0]$ of the double line the tangent cone is the same double plane given by $\langle x_4 \rangle^2$.  The points which have intersection multiplicity 4 with the double line are the locus of $\langle x_4 \rangle^2 $ and $\langle x_4 q_{2,2}(\lambda, \nu \parallel x_2,x_3,x_4) \rangle$.  Since $x_4$ is a component of both terms then the line emanating from the hyperplane $\langle x_4 \rangle$ to any point of the double line will have multiplicity 4. 

\par

Let $X$ be a hypersurface which contains a double line.  By a coordinate transformation we can assume the double line is $\langle x_2,x_3,x_4 \rangle^2$.  The most general equation which contains the ideal $\langle x_2,x_3,x_4 \rangle^2$ is

\begin{equation}
\begin{split}
q_{3,2}(x_0,x_1 \parallel x_2,x_3,x_4) + q_{2,3}(x_0,x_1 \parallel x_2,x_3,x_4) + q_{1,4}(x_0,x_1 \parallel x_2,x_3,x_4) + q_5(x_2,x_3,x_4).  
\end{split}
\end{equation}

If the tangent cone at every point on the double line is the same double plane then

\begin{equation}
\begin{split}
q_{3,2}(x_0,x_1 \parallel x_2,x_3,x_4) = q_3(x_0,x_1)f(x_2,x_3,x_4)^2 
\end{split}
\end{equation}

where $f$ is a linear form.  By a coordinate transformation, which keeps the double line fixed, the hyperplane $f$ is mapped to $x_4^2$.  So without loss of generality,

\begin{equation}
\begin{split}
q_{3,2}(x_0,x_1 \parallel x_2,x_3,x_4)=q_{3} (x_0,x_1)x_4^2.  
\end{split}
\end{equation}

If the line going from the hyperplane $\langle x_4 \rangle$ to any point of the double line has multiplicity 4 then

\begin{equation}
\begin{split}
x_4=q_{2,3}(\lambda,\nu \parallel x_2,x_3,x_4)=0.  
\end{split}
\end{equation}

This occurs only if $q_{2,3}(x_0,x_1 \parallel x_2,x_3,x_4)$ has $x_4$ as a component so

\begin{equation}
\begin{split}
q_{2,3}=x_4q_{2,2}(x_0,x_1 \parallel x_2,x_3,x_4) 
\end{split}
\end{equation}

which is precisely of the form (\ref{SS6}).

\end{proof}

\begin{prop}
A hypersurface $X$ is of type $SS7$ if and only if $X$ contains a triple point $p$ and a plane $P$, where $p \in P$ has the following properties:
\begin{itemize}
\item [i)] the tangent cone of $p$ contains a triple plane of $P$;
\item [ii)] the singular locus of $X$, when restricted to $P$, is the intersection of two quartic curves $q_1$ and $q_2$;
\item [iii)] the point $p$ is a quadruple point of $q_1$ and $q_2$.
\end{itemize}
\end{prop}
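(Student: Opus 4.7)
The plan is to argue both implications in parallel with the earlier propositions of this section: from the $SS7$ normal form read off each of the three properties directly, and for the converse normalize $p$ and $P$ and then translate (i)--(iii) into vanishing conditions on the coefficients of the defining equation $f$, recovering exactly the $SS7$ shape.

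For the forward direction, suppose $X$ is of type $SS7$. Every monomial in the $SS7$ equation has total degree at least three in $(x_1,x_2,x_3,x_4)$, so $p=(1:0:0:0:0)$ is a triple point; every monomial is divisible by $x_3$ or $x_4$, so $P=\{x_3=x_4=0\}\subset X$. The degree-three part of the defining equation in affine coordinates at $p$ is $q_3(x_3,x_4)$, and over $\CC$ this factors as a product of three linear forms in $x_3,x_4$ alone, each of which defines a hyperplane in the projectivized tangent space $\PP(T_p\PP^4)\cong\PP^3$ containing the line corresponding to $P$; this verifies (i). For (ii) and (iii) I would compute the Jacobian of the $SS7$ polynomial and restrict it to $P$: because every monomial lies in the ideal $(x_3,x_4)$, the derivatives $\partial_{x_0}f$, $\partial_{x_1}f$, $\partial_{x_2}f$ all restrict to zero on $P$, and only monomials of degree exactly one in $(x_3,x_4)$ contribute to $\partial_{x_3}f|_P$ and $\partial_{x_4}f|_P$. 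In the $SS7$ form the only such monomials are those of $q_{4,1}(x_1,x_2\parallel x_3,x_4)$; writing $q_{4,1}=A(x_1,x_2)x_3+B(x_1,x_2)x_4$ one obtains two quartic curves $A=0$ and $B=0$ in $P$ whose scheme-theoretic intersection is the singular locus of $X$ on $P$, and since $A$ and $B$ are homogeneous of degree four in $x_1,x_2$ alone each has $p=(1:0:0)\in P$ as a point of multiplicity four.

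For the converse, I would first apply a coordinate transformation placing $p$ at $(1:0:0:0:0)$ and $P$ at $\{x_3=x_4=0\}$; this is possible because the $SL(5,\CC)$-action is transitive on incident pairs (point, plane through the point). The triple-point hypothesis writes $f=x_0^2 Q_3+x_0 Q_4+Q_5$ with each $Q_i$ homogeneous of degree $i$ in $x_1,x_2,x_3,x_4$, and $P\subset X$ forces every monomial of each $Q_i$ into the ideal $(x_3,x_4)$. Property (i) says that $Q_3$ decomposes over $\CC$ into three linear factors each cutting out a plane of $\PP^3$ through the tangent line of $P$; each such factor then has the form $\alpha_i x_3+\beta_i x_4$, and hence $Q_3\in\CC[x_3,x_4]$. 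To use (iii), I would collect the monomials of degree exactly one in $(x_3,x_4)$ into
\[
x_0\,q_{3,1}(x_1,x_2\parallel x_3,x_4)+q_{4,1}(x_1,x_2\parallel x_3,x_4),
\]
write $q_{3,1}=A_3(x_1,x_2)x_3+A_4(x_1,x_2)x_4$ and $q_{4,1}=B_3(x_1,x_2)x_3+B_4(x_1,x_2)x_4$, and compute $\partial_{x_3}f|_P=x_0 A_3+B_3$ and $\partial_{x_4}f|_P=x_0 A_4+B_4$. These are the defining equations of the two quartic curves in $P\cong\PP^2$ whose intersection equals the singular locus of $X$ on $P$ in view of (ii). For each of these curves to have $p=(1:0:0)$ as a quadruple point, its restriction to the affine chart $x_0=1$ must vanish to order four at the origin; since $A_j$ has degree three and $B_j$ has degree four in $x_1,x_2$, this forces $A_3=A_4=0$, i.e.\ $q_{3,1}\equiv 0$. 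What then remains is precisely the $SS7$ normal form.

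The main obstacle is reading property (i) correctly: I interpret ``triple plane of $P$'' as the cubic tangent cone splitting into three (possibly coincident) hyperplanes of the projectivized tangent space at $p$, each containing the tangent line to $P$, and with this reading (i) becomes the algebraic condition $Q_3\in\CC[x_3,x_4]$. Once that is granted, the rest of the argument is careful bookkeeping of which quintic monomials survive the combined vanishing imposed by the triple point at $p$, the containment $P\subset X$, and the quadruple-point condition on the two quartics in $P$.
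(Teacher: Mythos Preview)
Your argument is correct and follows essentially the same route as the paper: normalize $p$ and $P$, read off (i)--(iii) from the $SS7$ normal form via the tangent cone and the Jacobian restricted to $P$, and conversely use (i) to kill the $q_{2,1},q_{1,2}$ terms in the $x_0^2$ coefficient and then (iii) to kill $q_{3,1}$ in the $x_0$ coefficient. The only cosmetic difference is your reading of (i) as ``the cubic tangent cone factors into three hyperplanes through the tangent line of $P$,'' whereas the paper phrases it as ``the tangent cone contains the ideal $\langle x_3,x_4\rangle^3$''; for a cubic these are equivalent and both yield $Q_3\in\CC[x_3,x_4]$.
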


\begin{proof}
\indent
Let $X$ be of type $SS7$ then it is equivalent, via a coordinate transformation, to the hypersurface 
\begin{equation}
\begin{split}
&x^{2}_0q_3(x_3,x_4) +x_0  \bigg( q_{2,2}(x_1,x_2 \parallel x_3,x_4)+q_{1,3}(x_1,x_2 \parallel x_3,x_4)+q_4(x_3,x_4) \bigg) \\ +  &\bigg( q_{4,1}(x_1,x_2 \parallel x_3,x_4)+q_{3,2}(x_1,x_2 \parallel x_3,x_4) +q_{2,3}(x_1,x_2 \parallel x_3,x_4)\\ + &q_{1,4}(x_1,x_2 \parallel x_3,x_4)  +q_5(x_3,x_4) \bigg)
\end{split}
\label{SS7}
\end{equation}
This hypersurface contains the triple point $p$ given by the ideal $\langle x_1,x_2,x_3,x_4 \rangle^3$ and a plane $P$ given by $\langle x_3,x_4 \rangle$.  The tangent cone is the hypersurface defined by $q_3(x_3,x_4)$ which which contains the triple plane $\langle x_3,x_4 \rangle^3$ of $P$.  When the differential of $X$ is restricted to the plane $\langle x_3,x_4 \rangle$ the only non-trivial contribution comes from the term 

\begin{equation}
\begin{split}
q_{4,1}(x_1,x_2 \parallel x_3,x_4) = q_4(x_1,x_2)x_3+ \tilde{q_4}(x_1,x_2)x_4.
\end{split}
\end{equation}

The differential, when restricted to the plane, is zero when

\begin{equation}
\begin{split}
q_4(x_1,x_2)=\tilde{q_4}(x_1,x_2)=0.
\end{split}
\end{equation}

Therefore, the plane contains two quartic curves $q_4(x_1,x_2)$ and $\tilde{q_4}(x_1,x_2)$ which contain $p$ as the quadruple point. 

\par

Let $X$ be a hypersurface which contains a triple point $p$ and a plane $P$, where $p \in P$.  By a coordinate transformation we can assume the triple point is $\langle x_1,x_2,x_3,x_4 \rangle^3$ and the plane is $\langle x_3,x_4 \rangle$.  The most general equation which contains the ideal $\langle x_1,x_2,x_3,x_4 \rangle^3$ and $\langle x_3,x_4 \rangle$ is 

\begin{equation}
\begin{split}
 x^{2}_0 & \bigg( q_{2,1}(x_1,x_2 \parallel x_3,x_4) +q_{1,2}(x_1,x_2 \parallel x_3,x_4)+ q_3(x_3,x_4) \bigg) \\   +  & x_0  \bigg( q_{3,1}(x_1,x_2 \parallel x_3,x_4)  + q_{2,2}(x_1,x_2 \parallel x_3,x_4)+q_{1,3}(x_1,x_2 \parallel x_3,x_4)+q_4(x_3,x_4) \bigg) \\  + & \bigg( q_{4,1}(x_1,x_2 \parallel x_3,x_4)+q_{3,2}(x_1,x_2 \parallel x_3,x_4) +q_{2,3}(x_1,x_2 \parallel x_3,x_4)\\ +&q_{1,4}(x_1,x_2 \parallel x_3,x_4)+q_5(x_3,x_4) \bigg)
\end{split}
\label{SS7converse}
\end{equation}

If the tangent cone contains the triple plane of $P$ then it contains the ideal $\langle x_3,x_4 \rangle^3$.  Then the coefficients of the $x_0^2$ term of (\ref{SS7converse}) contains only the $q_3(x_3,x_4)$ term.  The differential of (\ref{SS7converse}), when restricted to the plane $\langle x_3,x_4 \rangle$, contains the equations of the form $x_0q_3(x_1,x_2)+q_4(x_1,x_2)$ and $x_0\tilde{q_3}(x_1,x_2)+\tilde{q_4}(x_1,x_2)$.  If the singular locus of $X$ in the plane is the intersection of two quartic curves then

\begin{equation}
\begin{split}
x_0q_3(x_1,x_2)+q_4(x_1,x_2) = x_0 \tilde{q_3}(x_1,x_2)+ \tilde{q_4}(x_1,x_2)=0.
\end{split}
\end{equation}

So $x_0q_3(x_1,x_2)+q_4(x_1,x_2)$ and $x_0\tilde{q_3}(x_1,x_2)+\tilde{q_4}(x_1,x_2)$ are the quartic curves.  If $p$ is a quadruple point of both quartic curves then $q_3$ and $\tilde{q_3}$ are 0, so $X$ is of the form ($\ref{SS7}$).

\end{proof}

This completes the proof of Theorem~\ref{theorem:non-stable}.

\subsection{Stable Locus} \label{sec:stablelocus}

The classification of singularities of non-stable quintic threefolds can be used to give a partial description of the singularities which occur in the stable locus.  The stable locus represents all of the closed orbits in the moduli space.   Ideally, the stable locus would only include smooth hypersurfaces and the boundary would include hypersurfaces with singularities.  Even in the case of cubic threefolds and cubic fourfolds, this is not the case as shown in\cite{laza,allcock}.  As the degree and dimension of hypersurfaces increases more singularity types will be included in the stable locus.  In \cite{mumford} there is a general proposition which states that a smooth hypersurface will always be stable.  

\begin{prop}[\cite{mumford} Prop. 4.2] A smooth hypersurface $F$ in $\PP^n$ with degree $\geq 2$ is a stable hypersurface.
\end{prop}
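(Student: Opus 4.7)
Plan: I argue by contradiction using the Hilbert--Mumford numerical criterion combined with the Jacobian criterion for smoothness, essentially running the Section~\ref{sec:maxnon-stable} analysis in reverse. Suppose $F \subset \PP^n$ is a smooth hypersurface of degree $d \geq 2$ defined by $f \in \CC[x_0,\dots,x_n]_d$, but $F$ is not stable. By Hilbert--Mumford and the $SL(n+1)$-equivariance of $\mu$ (the analogue of~(\ref{eq:gequivariant})), there exists a nontrivial normalized $1$-PS $\lambda$ with weights $a_0 \geq a_1 \geq \cdots \geq a_n$ summing to zero (so $a_0 > 0 > a_n$), such that every monomial $x_0^{i_0}\cdots x_n^{i_n}$ appearing in $f$ satisfies $\sum_j a_j i_j \leq 0$.

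The first step is a pointwise analysis at the top of the destabilizing flag $F_\lambda$ of~(\ref{eq:1psflagvariety}). Because $x_0^d$ has weight $d a_0 > 0$, it cannot occur in $f$, so $p_0 = [1:0:\cdots:0]$ lies on $F$. Each partial derivative $\partial f/\partial x_j$ evaluated at $p_0$ is (up to a constant) the coefficient of $x_0^{d-1} x_j$, a monomial of weight $(d-1)a_0 + a_j$. If all of these weights are strictly positive, then every such coefficient vanishes and $p_0$ is a singular point of $F$, contradicting smoothness. A short arithmetic check using the trace relation $\sum_j a_j = 0$, the ordering $a_i \leq a_0$, and $a_j \geq a_n$ shows that the only way this positivity can fail is if $n \geq d-1$; in particular the large-degree regime $d \geq n+2$ is handled entirely by this pointwise argument.

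For the residual regime $2 \leq d \leq n+1$, I would promote the pointwise analysis to a global Jacobian-ideal argument. Smoothness of $F$ means the partials $f_0 = \partial f/\partial x_0,\dots,f_n = \partial f/\partial x_n$ cut out the empty scheme in $\PP^n$, so by the projective Nullstellensatz $(x_0,\dots,x_n)^N \subseteq (f_0,\dots,f_n)$ for some $N \gg 0$. For each $j$, writing $x_j^N = \sum_i g_{ij} f_i$ with $\deg g_{ij} = N-d+1$ and comparing $\lambda$-weights on both sides (using $\mu(f_i,\lambda) \leq -a_i$, since differentiation in $x_i$ subtracts $a_i$ from the weight, and $\mu(g_{ij},\lambda) \leq (N-d+1) a_0$) yields a system of linear inequalities on $(a_0,\ldots,a_n)$. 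The plan is to combine this system with the trace relation $\sum_j a_j = 0$ to force every $a_j = 0$, contradicting the nontriviality of $\lambda$.

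The main obstacle is exactly this last combinatorial step: a naive single-index bound only reproduces $(d-1) a_0 \leq -a_n$, which is compatible with nontrivial $\lambda$. Closing the gap requires running the Jacobian estimate simultaneously across a full $\lambda$-homogeneous basis of degree $N$ monomials (or equivalently iterating the pointwise step at the flag-centers $p_0, p_{\text{next}},\dots$ along $F_\lambda$), accumulating enough constraints that the sum-zero condition on the weights becomes inconsistent. I expect essentially all the work of the proof to concentrate here, with the opening reductions and the Nullstellensatz setup being routine.
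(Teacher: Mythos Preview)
The paper does not supply its own proof of this proposition; it is quoted from \cite{mumford} (Prop.~4.2) and used as a black box, so there is nothing in the paper to compare your argument against.

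On the substance of your plan: the opening is correct and is essentially Mumford's first step --- $p_0=[1:0:\cdots:0]$ lies on $F$, and your bound $(d-1)a_0+a_n\ge(d-1-n)a_0$ does force $p_0$ singular once $d\ge n+2$. The genuine gap is the second half, and it is more than routine bookkeeping. Your Nullstellensatz estimate, as you set it up, yields at best $Na_j\le (N-d+1)a_0-a_n$ for each $j$; for $j=0$ this is exactly $(d-1)a_0+a_n\le 0$, and summing over $j$ or letting $N$ grow gives nothing sharper. Equivalently, isolating the monomial $x_j^N$ in $x_j^N=\sum_i g_{ij}f_i$ only shows that some $x_ix_j^{d-1}$ occurs in $f$, giving one inequality $a_{i(j)}+(d-1)a_j\le 0$ per $j$; these do \emph{not} combine with $\sum a_j=0$ to force $\lambda$ trivial. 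For the very case of this paper ($n=4$, $d=5$) take $\lambda=\langle 1,1,1,1,-4\rangle$: with $i(j)=4$ one gets $-4+4\cdot 1=0\le 0$, so every constraint is met by a nontrivial $\lambda$. Non-stable $f$ for this $\lambda$ are precisely the family $SS2$ of Table~1; they are singular because they are \emph{reducible} ($f=x_4\cdot q_4$), not because of any weight obstruction at $p_0$. Your Jacobian--weight scheme does not see reducibility, so the mechanism you outline cannot close the range $2\le d\le n+1$ without a genuinely different idea. (Aside: the statement as quoted is slightly off --- smooth quadrics have positive-dimensional stabilizer and are only semistable; Mumford's Prop.~4.2 asserts stability for $d\ge 3$.)
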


A complete classification of all possible singularities in the stable locus of the moduli space of quintic threefolds has not yet been found.  Using the results of the previous section a partial list of singularities can be determined.

\begin{prop} If $X$ is a quintic threefold with at worst a double point then it is stable.
\end{prop}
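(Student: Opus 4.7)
The plan is to prove the contrapositive using the classification established in Theorem~\ref{theorem:non-stable}: assuming $X$ is non-stable, I want to exhibit a singularity of $X$ that is strictly worse than an isolated double point. Since every non-stable quintic threefold is, up to $SL(5,\CC)$, a member of one of the seven families (1)--(7) described in that theorem, the argument reduces to an inspection of each case.

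Cases (4), (5), and (7) list explicitly a triple point or a quadruple point, so in those cases $X$ carries a singularity of multiplicity at least three, which is immediately worse than a double point. Case (3) forces $X$ to contain a triple line, producing a one-dimensional locus of multiplicity-three points. In the remaining cases (1), (2), and (6), $X$ respectively contains a double plane, has a hyperplane as an irreducible component (so $\mathrm{Sing}(X)$ contains the surface where the hyperplane meets the residual quartic), or contains a double line; in each of these the singular locus is positive-dimensional, hence does not consist of isolated double points. In summary, every one of the seven families exhibits either multiplicity $\geq 3$ at some point or a singular locus of positive dimension.

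The only real subtlety is to pin down the meaning of \emph{at worst a double point}. I would take the convention, consistent with the preceding discussion of the stable locus, that this means $\mathrm{Sing}(X)$ is zero-dimensional and every singular point has multiplicity at most two. Once that convention is adopted, the proposition becomes an immediate corollary of Theorem~\ref{theorem:non-stable}, with no further computation beyond the case-by-case inspection above; there is no substantive obstacle since the hard work has already been done in Section~\ref{sec:geointermaxsemi}.
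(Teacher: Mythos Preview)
Your argument is correct and follows exactly the same contrapositive strategy as the paper: assume $X$ is non-stable, invoke the classification of non-stable quintic threefolds (the families SS1--SS7, equivalently the seven cases of Theorem~\ref{theorem:non-stable}), and observe that none of those singularity types is compatible with having only isolated double points. The only difference is that you spell out the seven cases and make the meaning of ``at worst a double point'' explicit, whereas the paper's proof is a one-line appeal to the same classification.
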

\begin{proof} 
Suppose $X$ is not stable, then it is non-stable.  Therefore, it belongs to one of the families SS1 - SS7, but $X$ does not satisfy the singularity criteria for any of these families.  Hence, it is stable.
\end{proof}

\begin{prop} If $X$ is a quintic threefold with at worst a triple point whose tangent cone is an irreducible cubic surface and $X$ does not contain a plane then it is stable.
\end{prop}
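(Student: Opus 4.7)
The plan is to follow the pattern of the preceding proposition: assume $X$ is non-stable and run through the classification in Theorem~\ref{theorem:non-stable}, ruling out each of the families $SS1$--$SS7$. The three active hypotheses are (a) every singularity of $X$ is isolated and of multiplicity at most three, (b) the tangent cone at any triple point is an irreducible cubic surface, and (c) $X$ contains no plane. I will check that each family forces a singularity type incompatible with (a), (b), or (c).

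Families $SS1$, $SS3$, $SS4$, and $SS6$ I expect to dispatch immediately using (a). $SS1$ places $X$ in the ideal $\langle x_3,x_4\rangle^2$, so every point of the plane $\{x_3=x_4=0\}$ is a singularity of multiplicity at least two; $SS3$ and $SS6$ analogously produce a triple line, respectively a double line, in the singular locus; and $SS4$ produces an isolated point of multiplicity four. Each of these contradicts (a). Family $SS2$ requires $X$ to be reducible with a hyperplane component, and hence to contain every plane lying in that hyperplane, violating (c).

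The two remaining cases $SS5$ and $SS7$ both center on a triple point, so their tangent cones at that point must be examined against hypothesis (b). For $SS5$ the tangent cone is $x_4^2\,q_1(x_1,x_2,x_3,x_4)$, which is visibly reducible as a union of a double plane and a hyperplane. For $SS7$ the tangent cone is $q_3(x_3,x_4)$, a binary cubic; over $\CC$ such a form splits as a product of three linear forms, so the tangent cone is a union of (at most three) planes and is reducible. In both cases (b) fails; $SS7$ also independently fails (c) since by construction it contains a plane $P$. With all seven families excluded, $X$ must be stable.

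I do not anticipate a serious obstacle: the proof is a straightforward case-by-case negation that exactly parallels the argument given for the preceding proposition. The only point requiring even mild care is $SS7$, where one must record the observation that a cubic form depending on just two variables necessarily factors over $\CC$, so the associated tangent cone is automatically reducible as a cubic surface.
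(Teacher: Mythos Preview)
Your proof is correct and follows essentially the same contrapositive case-check as the paper: assume non-stable, land in one of $SS1$--$SS7$, and eliminate each. The paper's version is terser---it observes that only $SS5$ and $SS7$ are compatible with ``at worst a triple point'' and then dispatches $SS5$ via the irreducible-tangent-cone hypothesis and $SS7$ via the no-plane hypothesis. Your treatment differs only in that you also rule out $SS7$ by noting its tangent cone $q_3(x_3,x_4)$ is a binary cubic and hence reducible over $\CC$; this is a valid alternative to invoking the plane condition, and indeed shows that for $SS7$ the plane hypothesis is not strictly needed once the tangent-cone hypothesis is in force.
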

\begin{proof} 
Suppose $X$ is not stable, then it is non-stable.  Therefore, it belongs to one of the families SS1 - SS7.  The only families, which have at worst a triple point are families SS5 and SS7.  Since the tangent cone of $X$ is irreducible then it is not in SS5.  Since $X$ does not contain a plane it is not in SS7, therefore it belong to neither family.  Hence, it is stable.
\end{proof}

\begin{prop} If $X$ is a quintic threefold with at worst a double line whose tangent cone at each point on the line is irreducible then it is stable.
\end{prop}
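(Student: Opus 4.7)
The plan is to mirror the strategy of the two preceding propositions: suppose $X$ is not stable, invoke Theorem~\ref{theorem:non-stable} to place $X$ in one of the families SS1--SS7, and then exhibit a contradiction with the hypotheses in each case.

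The hypothesis ``at worst a double line'' is designed to kill most families immediately. Specifically, I would rule out SS1 (contains a double plane, whose singular scheme is 2-dimensional, not 1-dimensional), SS2 (reducible with a hyperplane as a component, so the singular locus contains at least the 2-dimensional intersection of the hyperplane with the residual quartic), SS3 (contains a triple line, a singularity worse than a double line), SS4 (contains a quadruple point), SS5 (contains a triple point), and SS7 (contains a triple point). In each of these the singularities forced by Theorem~\ref{theorem:non-stable} are strictly worse than a double line, contradicting the hypothesis.

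The only subtle case is SS6, which also contains a double line. Here the content of the proposition kicks in: by the SS6 proposition, the tangent cone at every point of the double line is the double plane $\langle x_4 \rangle^2$. Since the defining form $x_4^2$ factors as a repeated linear form, this tangent cone is reducible (indeed non-reduced). This directly contradicts the hypothesis that the tangent cone at each point of the double line is irreducible, so $X$ cannot lie in SS6 either.

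Having eliminated all seven families, Theorem~\ref{theorem:non-stable} forces $X$ to be stable. The main obstacle — really only a bookkeeping matter — is making the case analysis airtight by being explicit that ``at worst a double line'' prohibits both higher-dimensional singular loci (as in SS1, SS2) and zero-dimensional singularities of high multiplicity that are not contained in a double line (as in SS4, SS5, SS7). Once that is clearly stated, SS6 is excluded purely on the irreducibility hypothesis, and no further geometric analysis is required.
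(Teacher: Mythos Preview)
Your proposal is correct and follows essentially the same argument as the paper: assume $X$ is non-stable, invoke Theorem~\ref{theorem:non-stable} to place it in one of SS1--SS7, note that only SS6 is compatible with the ``at worst a double line'' hypothesis, and then exclude SS6 via the irreducibility of the tangent cone. The paper's proof is terser (it simply asserts that SS6 is the only relevant family), whereas you spell out the case-by-case elimination, but the logic is identical.
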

\begin{proof} 
Suppose $X$ is not stable, then it is non-stable.  So it belongs to one of the families SS1 - SS7.  SS6 is the only family, which has at worst a double line as a singularity.  Since the tangent cone of $X$ at each point is irreducible then it is not in SS6.  Hence, it is stable.
\end{proof}

These four classes of hypersurfaces give the most generic classes of hypersurfaces which are stable.  There are also quintic threefolds which have singularities with degenerate tangent cones that do not fit into one of the classes SS1-SS7, but a complete classification is still unknown.

\medskip
\section{Minimal Orbits}

\label{sec:minimalorbit}

\medskip

\subsection{Luna's Criterion} \label{sec:lunascriterion}

The ability to degenerate the large families $SS1 - SS7$ into much smaller invariant families $MO\hyph A - MO \hyph D$ makes the problem of finding minimal orbits much more tractable.  Generically, a hypersurface in one of the families $MO\hyph A - MO \hyph D$ will be closed and thus minimal.  To explicitly determine which elements in $MO\hyph A - MO\hyph D$ are closed and which elements further degenerate one can use Luna's criterion.  This approach was used by Laza \cite{laza} in the case of cubic fourfolds.

\par
Luna's criterion is used when there is an affine $G$-variety $Y$ and a point $y \in Y$ which has a non-finite stabilizer $H \subseteq G$.  If $Y^H$ is the set of points in $Y$ which are $H$-invariant and $N_G(H)$ is the normalizer of $H$ in $G$ then there is a natural action of $N_G(H)$ on $Y^H$.  Luna's criterion reduces the problem of determining whether $Gy$ is closed in $Y$ to whether $N_G(H)$ is closed in $Y^H$.  

\begin{prop}[Luna's Criterion] \cite{vinberg,luna}

Let $Y$ be an affine variety with a $G$-action and $y \in Y$ a point stabilized by a subgroup $H \subseteq G$.  Then the orbit $Gy$ is closed in $Y$ if and only if the orbit $N_G(H)y$ is closed in $Y^H$.
\end{prop}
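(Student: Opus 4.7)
The plan is to prove the two implications separately, working throughout in the GIT setting where $G$ (and the relevant stabilizers) are reductive. Since $Y^H$ is cut out in the affine variety $Y$ by the closed conditions $\{hy' - y' = 0 : h \in H\}$, it is a closed subvariety on which $N_G(H)$ acts, and $y \in Y^H$ by hypothesis. In both directions the containment $N_G(H)y \subseteq Gy \cap Y^H$ is immediate.

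For the forward direction I would assume $Gy$ is closed in $Y$. Then $Gy \cap Y^H$ is closed in $Y^H$, and I would analyze this intersection set-theoretically: $gy \in Y^H$ iff $g^{-1}Hg \subseteq \mathrm{Stab}_G(y)$. The closedness of $Gy$ makes $\mathrm{Stab}_G(y)$ reductive by Matsushima's theorem, and then a standard conjugacy theorem for reductive subgroups inside a reductive group forces $Gy \cap Y^H$ to decompose as a finite disjoint union of $N_G(H)$-orbits. Each such orbit is therefore closed in $Y^H$; in particular so is $N_G(H)y$.

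For the reverse direction, assume $N_G(H)y$ is closed in $Y^H$ and suppose for contradiction that $Gy$ is not closed in $Y$. Since $G$ is reductive, $\overline{Gy} \setminus Gy$ contains a unique closed orbit $Gy_0$, and by the Hilbert-Mumford criterion there is a 1-PS $\lambda \colon \CC^* \to G$ with $y_0' := \lim_{t \to 0} \lambda(t)y \in Gy_0$. Provided $\lambda(\CC^*) \subseteq N_G(H)$, for every $h \in H$ and $t \in \CC^*$ one has $h\lambda(t)y = \lambda(t)\bigl(\lambda(t)^{-1} h \lambda(t)\bigr)y = \lambda(t)y$, because $\lambda(t)^{-1} h \lambda(t) \in H$ fixes $y$. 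Thus the whole curve $\lambda(\CC^*)y$, and hence its limit $y_0'$, lies in $Y^H$. But $y_0' \notin Gy \supseteq N_G(H)y$, contradicting closedness of $N_G(H)y$ in $Y^H$.

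The main obstacle is precisely the step of producing a destabilizing 1-PS whose image lies in $N_G(H)$: a generic Hilbert-Mumford $\lambda$ need not normalize $H$. I would resolve this by invoking Kempf's theorem on optimal destabilizing 1-PS, which furnishes a canonical $\lambda$ unique up to conjugation by $\mathrm{Stab}_G(y)$; since $H \subseteq \mathrm{Stab}_G(y)$, the $H$-action permutes the optimal class, and one can choose a representative whose image centralizes, hence normalizes, $H$. An equivalent route is the Luna slice theorem at $y_0'$, which realizes a neighborhood of $Gy_0$ as an associated bundle $G \times^{\mathrm{Stab}_G(y_0')} N$ and reduces the question to the linear $H$-action on the slice $N$, where $\lambda$ can manifestly be chosen to commute with $H$.
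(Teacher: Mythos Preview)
The paper does not give its own proof of this proposition: it is stated with citations to Luna and to Vinberg--Popov and then used as a black box. There is therefore no ``paper's proof'' to compare against; your sketch is an attempt to supply what the references contain.

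Your outline is essentially the standard argument from those sources. The backward implication via Kempf's optimal one-parameter subgroup is exactly the right mechanism: the optimal parabolic $P(\lambda)$ is canonical, hence normalized by $\mathrm{Stab}_G(y) \supseteq H$, and reductivity of $H$ lets one place $H$ inside a Levi of $P(\lambda)$ and choose $\lambda$ in the center of that Levi, so $\lambda$ centralizes $H$. Your phrasing ``$H$-action permutes the optimal class, and one can choose a representative whose image centralizes $H$'' is correct in spirit but hides this Levi step; as written it is an assertion rather than an argument.

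In the forward direction your decomposition of $Gy \cap Y^H$ into finitely many $N_G(H)$-orbits is the right target, but the phrase ``a standard conjugacy theorem'' is doing a lot of work. What is actually needed is Richardson's finiteness (finitely many $S$-conjugacy classes of reductive subgroups of a reductive group $S=\mathrm{Stab}_G(y)$ that are $G$-conjugate to $H$), together with the translation of $N_G(H)$-orbits in $Gy\cap Y^H$ into double cosets $N_G(H)\backslash\{g: g^{-1}Hg\subseteq S\}/S$. None of this is wrong, but if you intend this as a proof rather than a pointer to the literature, those two steps should be named explicitly.
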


\begin{remark}[\cite{vinberg}]
In the case where $H$ is reductive and connected, $N_G(H) = H \cdot Z_G(H)$ where $Z_G(H)$ is the centralizer of $H$ in $G$.  Since $N_G(H)$ acts on $Y^H$ we can quotient out by $H$.  Thus, we can study the action of $Z_G(H)$, instead of $N_G(H)$, on $Y^H$.
\end{remark}

The case of quintic threefolds consists of an $SL(5,\CC)$-action on the projective variety $\PP(V)$.  $V$ is the linearization of the $SL(5,\CC)$-action on $\PP(V)$.  The closed orbits of points in the linearization $V$ correspond to closed orbits of points in $\PP(V)$.  This correspondence between a projective variety and its linearization allows us to apply Luna's criterion to $V$.  Given a point from one of the families $MO\hyph A- MO \hyph D$, the stabilizer subgroup is the invariant 1-PS  i.e. $H=\lambda$.  The following lemma reduces the problem of finding minimal orbits to finding stable points in the $Z_G(H)$-action on $V^H$.

\begin{lemma}
Let $v\in V$ be a point with stabilizer $H$ i.e. $v \in V^H$. If $v\in V$ is stable with respect to the $Z_G(H)$-action on the $H$-invariant space $V^H$ then the orbit $Gv$ is closed.
\end{lemma}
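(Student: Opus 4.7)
The plan is to deduce the statement directly from Luna's criterion combined with the preceding remark; the argument is essentially a formal unwinding. First, I would check that the stabilizer $H$ is connected and reductive, so that the decomposition in the remark applies. For the points $v$ arising from the families $MO \hyph A$--$MO \hyph D$, the stabilizer $H$ is the invariant one-parameter subgroup $\lambda$, which is isomorphic to $\CC^*$ and hence connected and reductive. Consequently $N_G(H) = H \cdot Z_G(H)$.

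Next, since $v \in V^H$, the subgroup $H$ fixes $v$, so the $N_G(H)$-orbit collapses:
\[
N_G(H) \cdot v \;=\; (H \cdot Z_G(H)) \cdot v \;=\; Z_G(H) \cdot v.
\]
The hypothesis that $v$ is stable for the $Z_G(H)$-action on $V^H$ means, by the definition of GIT-stability, that $Z_G(H) \cdot v$ is closed in $V^H$ (with finite stabilizer). Combining these two observations shows that $N_G(H) \cdot v$ is closed in $V^H$.

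Finally, I would invoke Luna's criterion in the form stated in the previous proposition: $G \cdot v$ is closed in $V$ if and only if $N_G(H) \cdot v$ is closed in $V^H$. Since the right-hand condition has just been established, the orbit $Gv$ is closed, as required. There is no serious obstacle beyond quoting the correct statement of stability; the real value of the lemma lies not in the proof but in its later use, where it converts the problem of identifying minimal orbits on the large space $V$ into a much more tractable stability analysis for the smaller reductive group $Z_G(H)$ acting on the $\lambda$-invariant linear subspace $V^H$.
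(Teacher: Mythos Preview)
Your argument is correct and follows the same route as the paper's proof: stability gives a closed $Z_G(H)$-orbit, which by the remark coincides with the $N_G(H)$-orbit, and Luna's criterion then forces $Gv$ to be closed. In fact you have spelled out explicitly the passage from $Z_G(H)$ to $N_G(H)$ (via $N_G(H)=H\cdot Z_G(H)$ and $H\cdot v=v$) that the paper's two-line proof leaves implicit.
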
 

\begin{proof}
Let $v\in V^H$ be stable with respect to the $Z_G(H)$-action on $V^H$.  By the definition of stable point, the orbit $Z_G(H)v$ is closed.  By Luna's criterion, $Gv$ is closed.
\end{proof}

\subsection{First Level of Minimal Orbits} \label{sec:firstlevelminimalorbits}

The centralizer groups for the families $MO\hyph A - MO\hyph D$ are given in table 3.  If $v \in V$ from $MO\hyph A- MO \hyph D$ is $Z_G(H)$-stable in $V^H$, then it is a minimal orbit.  If that point $v$ is non-stable, with respect to the $Z_G(H)$-action, then it is either unstable or there is a destablizing 1-PS $\lambda$.  This destabilizing 1-PS further degenerates $v$ into a smaller family, with a different stabilizer $H'$.  The same process is then repeated for the smaller families.  The Hilbert-Mumford criterion, with respect to the $Z_G(H)$-action on $V^H$, can be applied to determine precisely the stable, semistable, and unstable points.

\begin{table}[htdp]
\centering
\renewcommand{\arraystretch}{1.0}
\scalebox{0.95}{
    \begin{tabular}{ | c | c | c | }
    \hline
    \emph{Family} & \emph{Invariant 1-PS Subgroup (H)} & \emph{Centralizer of H ($Z_G(H))$} \\  \hline
    MO-A & $\langle 3, 3, -2, -2, -2 \rangle$ & $SL(2,\CC) \times SL(3,\CC)$ \\ \hline
    
   \multicolumn{3}{| p{14cm} |}{$q_{2,3}(x_0,x_1 \parallel x_2 , x_3, x_4)$} \\ 
    \hline \hline

    MO-B & $\langle 1, 0, 0, 0, -1 \rangle$ & $\CC^* \times SL(3,\CC) \times \CC^*$ \\ \hline
    
   \multicolumn{3}{| p{14cm} |}{$q_5(x_1,x_2,x_3)+x_0x_4q_3(x_1,x_2,x_3)+x_0^2x_4^2q_1(x_1,x_2,x_3)$} \\ 
    \hline \hline

    MO-C & $\langle 4, 4, -1, -1, -6 \rangle$ & $SL(2,\CC) \times SL(2,\CC) \times \CC^*$ \\ \hline
    
   \multicolumn{3}{| p{14cm} |}{$q_{1,4}(x_0,x_1 \parallel x_2,x_3) + x_4q_{2,2}(x_0,x_1 \parallel x_2,x_3) + x_4^2q_3(x_0,x_1) 	$} \\ 
    \hline \hline

    MO-D & $\langle 4, -1, -1, -1, -1 \rangle$ & $\CC^* \times SL(4,\CC) $ \\ \hline
    
   \multicolumn{3}{| p{14cm} |}{$x_0q_4(x_1,x_2,x_3,x_4) 	$} \\ 
    \hline \hline

    \end{tabular}
    }
	\vspace{0.1cm}
\caption{First Level of Minimal Orbits MO-A - MO-D}
\label{table:firstminorbit}
\end{table}

\subsubsection{Minimal Orbit A} \label{sec:minorbita}

In the case of family $MO \hyph A$, the centralizer $Z_G(H)=\CC^2 \times SL(3,\CC) \subseteq SL(5,\CC)$ acts on polynomials in $MO\hyph A$ which are of the form $q_{2,3}(x_0,x_1 \parallel x_2,x_3,x_4)$.  The minimal orbit can be written as 

\begin{equation} \label{eq:moadecomp} x_0^2s_3(x_2,x_3,x_4) + x_0x_1t_3(x_2,x_3,x_4) + x_1^2u_3(x_2,x_3,x_4),
\end{equation}
where $s_3(x_2,x_3,x_4)$, $t_3(x_2,x_3,x_4)$, $u_3(x_2,x_3,x_4)$ are degree $3$ polynomials in the variables $x_2$, $x_3$, and $x_4$.  The polynomials which represent closed orbits in this family are stable with respect to the $SL(2,\CC) \times SL(3,\CC)$ action on (\ref{eq:moadecomp}).  The polynomials that further degenerate are semistable with respect to the $SL(2,\CC) \times SL(3,\CC)$ action.  The unstable points are unstable with respect to the $SL(2,\CC) \times SL(3,\CC)$ action.  
The set of semistable and unstable points can be found by modifying the techniques in section~\ref{sec:maxnon-stable} which involved classifying $G$-orbits by using (\ref{eq:gequivariant}) to find which polynomials which satisfy the Hilbert-Mumford criterion for the set of normalized 1-PS $\lambda$.  A normalized 1-PS in $SL(2,\CC) \times SL(3,\CC)$ is of the following form:

\begin{equation} \label{moaweylchamber}
\begin{pmatrix}
t & 0 & 0 & \cdots & 0 \\
0 & t^{-1} & 0 & \cdots & 0 \\
0 & 0 & t^a & 0 & 0 \\
\vdots & \vdots & 0 & t^b & 0 \\
0 & 0 & 0 & 0 & t^c 
\end{pmatrix}
\end{equation}

where $a+b+c=0$ and $a \geq b \geq c$.  The normalization restriction of the $SL(3,\CC)$ block gives an ordering of the degree $3$ monomials in the variables $x_2,x_3,x_4$.  The weights of $x_0^2$, $x_0x_1$, and $x_2^2$ are $2$,$0$, and $-2$ respectively.  The set of maximal non-stable polynomials $F$ are those where $\mu(F,\lambda) \leq 0$ and the maximal unstable families have $\mu(F, \lambda) < 0$.  A general polynomial in (\ref{eq:moadecomp}) will be semistable if $s_3$, $t_3$, and $u_3$ have at most weights $-2$, $0$, and $2$, with respect to the action (\ref{moaweylchamber}).  These weights are neccesary in order balance the weights arising from $x_0^2$, $x_0x_1$, $x_1^2$ so that $\mu$ is less than zero.  Similary, the weights for a polynomial in (\ref{eq:moadecomp}) the weights for $s_3$, $t_3$, and $u_3$ are at most $-3$, $-1$, and $1$ for the polynomial to be unstable.  The calculation below gives the set of semistable and unstable families.  A similar method can be used for all other minimal orbits in order to explicitly calculate the set of semistable and unstable families.

\begin{figure}[htdp]
\begin{center}
   \subfloat{\label{fig:SS7-A}\includegraphics[trim =60mm 0mm 60mm 0mm, clip, scale=0.30]{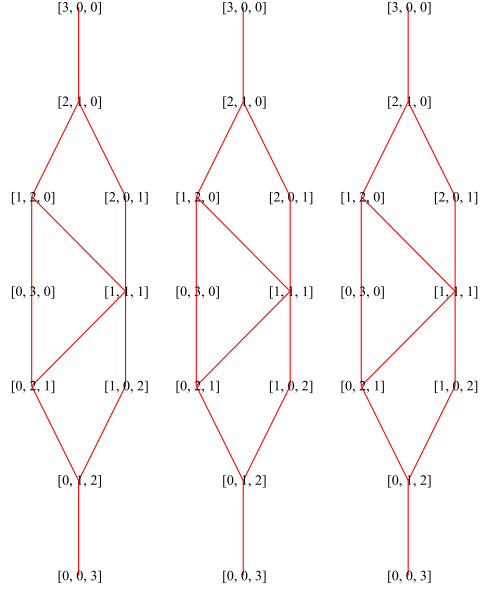}}
    \subfloat{\label{fig:SS7-B}\includegraphics[trim = 60mm 0mm 60mm 0mm, clip, scale=0.30]{family300.eps}}
    \subfloat{\label{fig:SS7-A}\includegraphics[trim = 60mm 0mm 60mm 0mm, clip, scale=0.30]{family300.eps}}
  \caption{Poset structure of equation~\ref{eq:moadecomp}}
 \end{center}
  \label{fig:SS7a}
\end{figure}


From the GIT analysis of $MO \hyph A$, the semistable hypersurfaces and their corresponding destabilizing 1-PS $\lambda$ are given below.

\begin{enumerate}

\item [(SS1-A)] $ x_0^2( q_3(x_3,x_4)+q_1(x_2,x_3)x_4^2+x_4^3) + x_0x_1 ( q_3(x_3,x_4) + x_2x_3x_4 + q_1(x_2,x_3)x_4^2+x_3x_4^2+x_3^2x_4+x_4^3 ) + x_1^2 ( x_2q_2(x_3,x_4) + q_3(x_3,x_4) )$

\begin{enumerate}
\item[-]  Destabilizing 1-PS: $\langle 1, -1, 4, -1, -3 \rangle$
\end{enumerate}

\item [(SS2-A)]$x_0^2 ( q_3(x_3,x_4) ) + x_0x_1 ( x_2q_2(x_3,x_4) + q_3(x_3,x_4) ) + x_1^2 ( x_2q_2(x_3,x_4) + q_3(x_3,x_4) )$

\begin{enumerate}
\item[-] Destabilizing 1-PS: $\langle 1, -1, 2, -1, -1 \rangle$

\end{enumerate}

\item [(SS3-A)]$x_0^2 ( q_1(x_2,x_3)x_4^2+x_4^3 ) + x_0x_1 ( q_2(x_2,x_3)x_4 + q_1(x_2,x_3)x_4^2+ x_4^3 ) + x_1^2 ( q_2(x_2,x_3)x_4 + q_1(x_2,x_3)x_4^2+ x_4^3 )$

\begin{enumerate}
\item[-] Destabilizing 1-PS: $\langle 1, -1, 1, 1, -2 \rangle$

\end{enumerate}

\item [(SS4-A)]$x_0^2 ( x_3x_4^2+x_4^3 ) + x_0x_1 ( q_3(x_3,x_4) + x_2x_3x_4 + q_1(x_2,x_3)x_4^2+x_3x_4^2+x_3^2x_4+x_4^3 ) + x_1^2 ( x_2^2q_1(x_3,x_4) + x_2q_2(x_3,x_4) + q_3(x_3,x_4) )$

\begin{enumerate}
\item[-] Destabilizing 1-PS: $\langle 1, -1, 1, 0, -1 \rangle$

\end{enumerate}

\item [(SS5-A)]$x_0^2 ( q_1(x_2,x_3)x_4^2+x_3^2x_4+x_4^3 ) + x_0x_1 ( q_3(x_3,x_4) + x_2x_3x_4 + q_1(x_2,x_3)x_4^2+x_3x_4^2+x_3^2x_4+x_4^3 ) + x_1^2 (  x_2q_2(x_3,x_4) + q_3(x_3,x_4) + q_2(x_2,x_3)x_4 + x_4^3  )$

\begin{enumerate}
\item[-] Destabilizing 1-PS: $\langle 1, -1, 2, 0, -2 \rangle$

\end{enumerate}

\end{enumerate}

From the poset analysis, the set of unstable families are given below.

\begin{enumerate}
\item [(US1-A)] $x_0^2(q_3(x_3,x_4)+x_2x_4^2) + x_0x_1(q_3(x_3,x_4)+x_2x_4^2) + x_1^2(q_3(x_3,x_4)+x_2x_3x_4+x_2x_4^2)$ 

\item [(US2-A)] $x_0^2(q_3(x_3,x_4)) + x_0x_1(q_3(x_3,x_4)) + x_1^2(q_3(x_3,x_4)+x_2q_2(x_3,x_4))$ 

\item [(US3-A)] $x_0^2(x_2x_4^2+x_3^2x_4+x_3x_4^2+x_4^3) + x_0x_1(x_2x_4^2+x_3^2x_4+x_3x_4^2+x_4^3) + x_1^2(q_3(x_3,x_4)+x_2x_3x_4+x_2x_4^2)$ 

\item [(US4-A)] $x_0^2(x_2x_4^2+x_3x_4^2+x_4^3) + x_0x_1(x_2x_4^2+x_3^2x_4+x_3x_4^2+x_4^3) + x_1^2(q_2(x_2,x_3)x_4+q_1(x_2,x_3)x_4^2+x_4^3)$

\item [(US5-A)] $x_0^2(x_4^3) + x_0x_1(q_1(x_2,x_3)x_4^2+x_3^2x_4+x_4^3) + x_1^2(q_3(x_3,x_4)+x_2^2x_4+x_2x_3x_4+x_2x_4^2)$ 

\item [(US6-A)] $x_0^2(x_3x_4^2+x_4^3) + x_0x_1(q_1(x_2,x_3)x_4^2+x_3^2x_4+x_4^3) + x_1^2(q_3(x_3,x_4)+x_2x_3x_4+x_2x_4^2)$ 

\item [(US7-A)] $x_0^2(x_3x_4^2+x_4^3) + x_0x_1(q_1(x_2,x_3)x_4^2+x_3^2x_4+x_4^3) + x_1^2(q_2(x_2,x_3)x_4+q_1(x_2,x_3)x_4^2+x_4^3)$ 

\item [(US8-A)] $x_0^2(x_3^2x_4+x_3x_4^2+x_4^3) + x_0x_1(q_1(x_2,x_3)x_4^2+x_3^2x_4+x_4^3) + x_1^2(x_2q_2(x_3,x_4)+q_3(x_3,x_4))$ 

\item [(US9-A)] $x_0^2(x_3x_4^2+x_4^3) + x_0x_1(q_1(x_2,x_3)x_4^2+x_3^2x_4+x_4^3) + x_1^2(x_2q_2(x_3,x_4)+q_3(x_3,x_4))$ 

\item [(US10-A)] $x_0^2(x_4^3) + x_0x_1(q_1(x_2,x_3)x_4^2+x_3^2x_4+x_4^3) + x_1^2(x_2q_2(x_3,x_4)+q_3(x_3,x_4))$ 

\item [(US11-A)] $x_0^2(x_3x_4^2+x_4^3) + x_0x_1(q_1(x_2,x_3)x_4^2+x_3^2x_4+x_4^3) + x_1^2(x_2x_3x_4+x_2x_4^2+q_3(x_3,x_4))$ 

\item [(US12-A)] $x_0^2(x_3x_4^2+x_4^3) + x_0x_1(q_1(x_2,x_3)x_4^2+x_3^2x_4+x_4^3) + x_1^2(q_2(x_2,x_3)x_4+q_1(x_2,x_3)x_4^2+x_4^3)$ 

\item [(US13-A)] $x_0^2(x_4^3) + x_0x_1(q_1(x_2,x_3)x_4^2+x_3^2x_4+x_4^3) + x_1^2(q_2(x_2,x_3)x_4+q_1(x_2,x_3)x_4^2+x_4^3+x_3^3)$ 

\item [(US14-A)] $x_0^2(x_4^3+x_3^2x_4) + x_0x_1(x_4^3+x_3^2x_4) + x_1^2(x_2q_2(x_3,x_4)+q_3(x_3,x_4))$ 

\item [(US15-A)] $x_0^2(x_4^3) + x_0x_1(x_4^3+x_3^2x_4) + x_1^2(x_2q_2(x_3,x_4)+q_3(x_3,x_4)+x_2^3x_4)$ 

\item [(US16-A)] $x_0^2(x_3^2x_4+x_4^3) + x_0x_1(x_4^3+x_3^2x_4) + x_1^2(q_3(x_3,x_4)+x_2x_3x_4+x_2x_4^2)$ 

\item [(US17-A)] $x_0^2(x_3^2x_4+x_4^3) + x_0x_1(x_4^3+x_3^2x_4) + x_1^2(q_2(x_2,x_3)x_4+q_1(x_2,x_3)x_4+x_4^3)$ 

\item [(US18-A)] $x_0^2(x_4^3) + x_0x_1(x_4^3+x_3^2x_4) + x_1^2(q_3(x_3,x_4)+x_2x_3x_4+x_2x_4^2+x_3^3)$ 

\end{enumerate}

\begin{prop} Let $X$, up to a coordinate transformation,  be of the form $q_{2,3}(x_0,x_1 \parallel x_2,x_3,x_4)$.   
\begin{enumerate}
\item If $X$ belongs to one of the families  US1-A - US18-A then X is unstable. 

\item If $X$ is of type SS1-A then the orbit is not closed and it degenerates to MO2-I. 

\item If $X$ is of type SS2-A then the orbit is not closed and it degenerates to MO2-IV. 

\item If $X$ is of type SS3-A then the orbit is not closed and it degenerates to MO2-IV.

\item If $X$ is of type SS4-A then the orbit is not closed and it degenerates to  MO2-I.

\item If $X$ is of type SS5-A then the orbit is not closed and it degenerates to  M02-IV.

\end{enumerate}
Otherwise, $X$ is a closed orbit.

\end{prop}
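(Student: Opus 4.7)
The plan is to combine Luna's criterion with a reduced Hilbert–Mumford analysis on the linearization $V^H$, exactly as in Section~\ref{sec:lunascriterion}. Since every $X$ in the statement is of the form $f=q_{2,3}(x_0,x_1\parallel x_2,x_3,x_4)$, the point $f$ already lies in $V^H$ for $H=\lambda=\langle 3,3,-2,-2,-2\rangle$. By Luna's criterion (and the lemma following it), it is enough to decide stability, semistability and instability of $f$ with respect to the $Z_G(H)$-action on $V^H$, where $Z_G(H)=SL(2,\CC)\times SL(3,\CC)$. Thus I would first recast the problem as a GIT problem on $V^H$: the $SL(2,\CC)$ factor acts on $(x_0,x_1)$ and the $SL(3,\CC)$ factor acts on $(x_2,x_3,x_4)$, and the corresponding normalized 1-PS in $Z_G(H)$ is the one in equation~(\ref{moaweylchamber}), giving the weight constraints $a+b+c=0$, $a\geq b\geq c$, together with the $SL(2,\CC)$-weight $\pm 1$.

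Next I would carry out a poset analysis on $V^H$ analogous to that of Section~\ref{sec:maxnon-stable}, but now restricted to the $18$-dimensional monomial set spanning $q_{2,3}(x_0,x_1\parallel x_2,x_3,x_4)$. Using the partial order induced by the normalized 1-PS in (\ref{moaweylchamber}) and linear programming exactly as for the $SS1\hyph SS7$ classification, I would produce the maximal monomial ideals $M_{\leq 0}(\lambda)$ and $M_{<0}(\lambda)$ in this restricted poset. The list of maximal $\mu\leq 0$ families produced by the procedure is precisely $SS1\hyph A,\ldots,SS5\hyph A$, with the destabilizing 1-PS listed alongside each, and the list of maximal $\mu<0$ families is $US1\hyph A,\ldots,US18\hyph A$. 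Hilbert–Mumford then immediately gives instability of every member of $US1\hyph A\hyph US18\hyph A$ and strict semistability of every member of $SS1\hyph A\hyph SS5\hyph A$; conversely, any $f$ which is not in any $SSk\hyph A$ family satisfies $\mu(f,\lambda)>0$ for every nontrivial $\lambda\in Z_G(H)$, so $f$ is $Z_G(H)$-stable on $V^H$, and Luna's criterion then yields that its $SL(5,\CC)$-orbit is closed.

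For each of the five semistable families I would compute the flat limit $\lim_{t\to 0}\lambda(t)\cdot f$ using the destabilizing 1-PS recorded with that family. Concretely, one substitutes the diagonal action into the explicit form of $f$, collects the $\mu$-maximal monomials (those whose weight equals $\mu(f,\lambda)$), and reads off the remaining polynomial. A direct verification shows that the surviving monomials of $SS1\hyph A$ and $SS4\hyph A$ assemble into the normal form of $MO2\hyph I$ (up to $Z_G(H)$-action), while the limits of $SS2\hyph A$, $SS3\hyph A$ and $SS5\hyph A$ collapse onto the normal form of $MO2\hyph IV$. That each limit is non-closed is already built into the construction: it is semistable with a nontrivial $\lambda$-action, hence has a one-parameter family of orbits flowing to a strictly smaller stratum, which is exactly the relevant $MO2$ family.

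The routine part of the argument is the bookkeeping: verifying that the 18 unstable and 5 semistable families above really exhaust the maximal $\mu<0$ and $\mu\leq 0$ strata of the $Z_G(H)$-poset on $V^H$. The main obstacle, and the only step that requires genuine computation rather than combinatorics, is the identification of the five limits with the normal forms $MO2\hyph I$ and $MO2\hyph IV$; one must check not only that the correct monomials survive the $\lambda$-filtration, but also that the residual $Z_G(H)$-action can be used to put the limit in the stated normal form. This is done by exhibiting explicit elements of $SL(2,\CC)\times SL(3,\CC)$ that simultaneously diagonalize the relevant $SL(2)$-binary cubic and $SL(3)$-ternary cubic pieces, matching the presentation in equations~(\ref{eq:MO2-I}) and (\ref{eq:MO2-IV}).
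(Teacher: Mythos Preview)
Your proposal is correct and follows essentially the same approach as the paper: the proposition in the paper is stated without proof, but the preceding discussion in Section~\ref{sec:minorbita} lays out exactly the method you describe---reduce via Luna's criterion to the $Z_G(H)=SL(2,\CC)\times SL(3,\CC)$ action on $V^H$, run the Hilbert--Mumford/poset analysis with normalized 1-PS of the form~(\ref{moaweylchamber}) to produce the $SSk\hyph A$ and $USk\hyph A$ lists, and read off each degeneration as the $\lambda$-limit under the listed destabilizing 1-PS. One small slip: the monomial basis of $V^H$ has $3\times 10=30$ elements, not $18$.
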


\subsubsection{Minimal Orbit B} \label{sec:minorbitb}

In the case of $MO \hyph B$, the centralizer $Z_G(H)=\CC^*  \times SL(3,\CC) \times \CC^* \subseteq SL(5,\CC)$ acts on polynomials in $MO\hyph B$ which are of the form $q_5(x_1,x_2,x_3)+x_0x_4q_3(x_1,x_2,x_3)+x_0^2x_4^2q_1(x_1,x_2,x_3)$.  The action on $Z_G(H)$ on $x_0x_4$ and $x_0^2x_4^2$ will have weights zero.  By modifying the procedure in the $MO \hyph A$ case, one can obtain the maximal semistable and unstable families with respect to the $Z_G(H)$ action.  The poset analysis shows that the maximal semistable families are the following families below.

\begin{enumerate}

\item [(SS1-B)]$ ( x_1q_4(x_2,x_3) + q_5(x_2,x_3) ) + x_0x_4 (  x_1q_2(x_2,x_3) + q_3(x_2,x_3)  ) + x_0^2x_4^2 (  q_1(x_2,x_3) )$
\begin{enumerate}
\item[-]  Destabilizing 1-PS: $\langle 1, 2, -1, -1, -1 \rangle$

\end{enumerate}

\item [(SS2-B)]$ ( x_1q_4(x_2,x_3) + q_5(x_2,x_3) + q_2(x_1,x_2)x_3^2 ) + x_0x_4 ( q_3(x_2,x_3) + x_1x_2x_3 + q_1(x_1,x_2)x_3^2  ) + x_0^2x_4^2 (  q_1(x_2,x_3) )$

\begin{enumerate}
\item[-]  Destabilizing 1-PS:  $\langle 1, 3, -1, -2, -1 \rangle$

\end{enumerate}

\item [(SS3-B)]$ ( q_3(x_1,x_2)x_3^2 + q_2(x_1,x_2)x_3^3 + q_1(x_1,x_2)x_3^4 + x_3^5 ) + x_0x_4 (  q_2(x_1,x_2)x_3 + q_1(x_1,x_2)x_3^2 + x_3^3  ) + x_0^2x_4^2 (  x_3  )$

\begin{enumerate}
\item[-]  Destabilizing 1-PS: $\langle 1, 1, 1, -2, -1 \rangle$

\end{enumerate}

\item [(SS4-B)]$ ( q_5(x_2,x_3) + x_1x_2^3x_3 + x_1x_2^2x_3^2 + x_1x_2x_3^3 + x_1x_3^4 + x_1^2x_2x_3^2 + q_2(x_1,x_2)x_3^3  ) + x_0x_4 ( q_3(x_2,x_3) + x_1x_2x_3 + q_1(x_1,x_2)x_3^2  ) + x_0^2x_4^2 (  q_1(x_2,x_3)  )$

\begin{enumerate}
\item[-]  Destabilizing 1-PS:  $\langle 1, 1, 0, -1, -1 \rangle$

\end{enumerate}

\end{enumerate}

From the poset analysis, the set of unstable families are given below.

\begin{enumerate}
\item [(US1-B)] $(x_1q_4(x_2,x_3)+x_1^2x_3)+x_0x_4(q_3(x_2,x_3)+x_1x_3^2)+x_0^2x_4^2(x_2+x_3)$

\item [(US2-B)] $(q_5(x_2,x_3)+x_1x_2^3x_3+x_1x_2^2x_3^2+x_1^2x_3^3+x_1x_2x_3^3+x_1x_3^4)+x_0x_4(q_1(x_1,x_2)x_3^2+x_2^2x_3+x_3^3)+x_0^2x_4^2(x_3)$ 

\item [(US3-B)] $(x_1^2x_2x_3^2+x_1x_2^2x_3^2+x_2^4x_3+x_2^3x_3^2+x_2^2x_3^3+q_2(x_1,x_2)x_3^3+q_1(x_1,x_2)x_3^4+x_3^5)+x_0x_4(q_1(x_1,x_2)x_3^2+x_2^2x_3+x_3^3)+x_0^2x_4^2(x_3)$

\item [(US4-B)] $(q_3(x_1,x_2)x_3^2+q_2(x_1,x_2)x_3^3+q_1(x_1,x_2)x_3^4+x_4^5)+x_0x_4(q_1(x_1,x_2)x_3^2+x_2^2x_3+x_3^3)+x_0^2x_4^2(x_3)$ 

\end{enumerate}

\begin{prop} Let $X$, up to a coordinate transformation, be of the form  $q_5(x_1,x_2,x_3)+x_0q_3(x_1,x_2,x_3)x_4+x_0^2q_1(x_1,x_2,x_3)x_4^2$.   
\begin{enumerate}
\item If $X$ belongs to one of the families  US1-B - US4-B then $X$ is unstable. 

\item If $X$ is of type SS1-B then the orbit is not closed and it degenerates to MO2-IV. 

\item If $X$ is of type SS2-B then the orbit is not closed and it degenerates to MO2-V. 

\item If $X$ is of type SS3-B then the orbit is not closed and it degenerates to MO2-IV.

\item If $X$ is of type SS4-B then the orbit is not closed and it degenerates to  MO2-VI.

\end{enumerate}
Otherwise, $X$ is a closed orbit.

\end{prop}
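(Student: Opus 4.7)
The plan is to imitate the proof for Minimal Orbit A in Section~\ref{sec:minorbita}, applying Luna's criterion together with a Hilbert--Mumford analysis of the centralizer action. By the lemma in Section~\ref{sec:lunascriterion}, a hypersurface $X$ of the form $q_5(x_1,x_2,x_3)+x_0x_4\,q_3(x_1,x_2,x_3)+x_0^2x_4^2\,q_1(x_1,x_2,x_3)$ has a closed $SL(5,\CC)$-orbit if and only if the corresponding point of $V^H$ is stable for the action of $Z_G(H) = \CC^*\times SL(3,\CC)\times\CC^*$, and otherwise it either degenerates under some normalized 1-PS of $Z_G(H)$ or is $Z_G(H)$-unstable (hence $G$-unstable by Luna's criterion).

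First I would parametrize a normalized 1-PS of $Z_G(H)$ as $\mathrm{diag}(t^a,t^{b_1},t^{b_2},t^{b_3},t^{-a})$ with $b_1+b_2+b_3=0$ and $b_1\geq b_2\geq b_3$. A crucial simplification is that the factors $x_0x_4$ and $x_0^2x_4^2$ carry total weight $0$ under both $\CC^*$ factors (the $x_0$-weight $a$ cancels the $x_4$-weight $-a$), so the numerical function on a monomial $x_0^{i_0}x_4^{i_4}\cdot x_1^{j_1}x_2^{j_2}x_3^{j_3}$ appearing in the decomposition $(q_5,q_3,q_1)$ reduces to the $SL(3,\CC)$-weight $b_1 j_1+b_2 j_2+b_3 j_3$ shifted by $a(i_0-i_4)$; but since $i_0=i_4$ on each of the three blocks, $\mu$ is governed entirely by the $SL(3,\CC)$-chamber weights on the triple $(q_5,q_3,q_1)$ of forms in $x_1,x_2,x_3$.

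Next I would build the poset on the monomials of degrees $5,3,1$ in $x_1,x_2,x_3$ using the analogue of the lemma in Section~\ref{sec:maxnon-stable} for $SL(3,\CC)$ (here the inequalities become $j_1\geq k_1$ and $j_1+j_2\geq k_1+k_2$), then run the linear-programming procedure from Appendix~\ref{appendix:linearprogram} to enumerate the top monomials whose lower sets admit a common 1-PS with $\mu\leq 0$ (respectively $\mu<0$). This produces the maximal semistable families $SS1\hyph B$--$SS4\hyph B$ and unstable families $US1\hyph B$--$US4\hyph B$. For each $SSk\hyph B$, I would then verify the listed destabilizing 1-PS $\lambda$ directly (by checking $\mu\leq 0$ monomial-by-monomial), compute the limit $\lim_{t\to 0}\lambda(t)\cdot f$, which retains only the $\mu=0$ monomials, and match the resulting invariant form against the list $MO2\hyph I$--$MO2\hyph X$ under a final coordinate change.

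The main obstacle is the same as in the $MO\hyph A$ analysis: the bookkeeping of the combinatorial enumeration. Here it is slightly harder because the three weight-zero blocks $x_0^2x_4^2,x_0x_4,1$ couple degree-$1$, degree-$3$ and degree-$5$ monomials in $x_1,x_2,x_3$ into a single poset on $3+10+21=34$ monomials, so there are many candidate maximal semistable sets; the claim that exactly four of them survive (and only four unstable ones above the stable threshold) must be verified with the Maple computation, and the identification of the correct second-level degeneration $MO2\hyph IV$, $MO2\hyph V$, $MO2\hyph IV$, $MO2\hyph VI$ for $SS1\hyph B$--$SS4\hyph B$ respectively requires explicitly exhibiting an $SL(5,\CC)$-coordinate change on each limit form.
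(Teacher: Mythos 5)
Your proposal follows essentially the same route as the paper: Luna's criterion reduces the question to stability for the $Z_G(H)=\CC^*\times SL(3,\CC)\times\CC^*$ action on $V^H$, the observation that $x_0x_4$ and $x_0^2x_4^2$ carry weight zero collapses the problem to an $SL(3,\CC)$-weight analysis of the triple $(q_5,q_3,q_1)$, and the maximal semistable/unstable families and their degenerations are then read off from the same poset and linear-programming computation used for $MO\hyph A$. The only caveat is that your normalization $\mathrm{diag}(t^a,t^{b_1},t^{b_2},t^{b_3},t^{-a})$ with $b_1+b_2+b_3=0$ omits the second independent $\CC^*$-direction of the centralizer's torus, but this is exactly the simplification the paper itself makes, so the two arguments coincide.
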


\subsubsection{Minimal Orbit C} \label{sec:minorbitc}

In the case of $MO \hyph C$, the centralizer $Z_G(H)= SL(2,\CC) \times SL(2,\CC) \times \CC^* \subseteq SL(5,\CC)$ acts on polynomials in $MO\hyph C$ which are of the form $q_{1,4}(x_0,x_1 \parallel x_2,x_3) + x_4q_{2,2}(x_0,x_1 \parallel x_2,x_3) + x_4^2q_3(x_0,x_1)$.  By modifying the procedure in the $MO \hyph A$, one can obtain the maximal semistable and unstable families with respect to the $Z_G(H)$ action.  The poset analysis shows that the maximal semistable families are the following families below.

\begin{enumerate}
\item [(SS1-C)]$ ( x_0(x_2x_3^3 + x_3^4) + x_1( x_2^2x_3^2+x_2x_3^3 + x_3^4)  ) + x_4^2 ( x_0x_1^2 + x_1^3 ) + x_4 ( x_0^2 (x_3^2) + x_0x_1 ( x_2x_3 + x_3^2)  + x_1^2(x_2^2+x_2x_3 + x_3^2) )$

\begin{enumerate}
\item[-]  Destabilizing 1-PS:  $\langle 1, -1, 1, -1, 0 \rangle$

\end{enumerate}
\end{enumerate}

From the poset analysis, the set of unstable families are given below.

\begin{enumerate}
\item [(US1-C)] $x_0(x_2x_3^3+x_3^4)+x_1(x_2^2x_3^2+x_2x_3^3+x_3^3)+x_4^2(x_0x_1^2+x_1^3)+x_4x_0x_1(x_3^2)+x_4x_1^2(x_2x_3+x_3^2)$
\end{enumerate}

\begin{prop} Let $X$, up to a coordinate transformation, be of the form $q_{1,4}(x_0,x_1 \parallel x_2,x_3) + q_3(x_0,x_1)x_4^2+ q_{2,2}(x_0,x_1 \parallel x_2,x_3)x_4 	$.
  
\begin{enumerate}
\item If $X$ belongs to the family  US1-C then $X$ is unstable.

\item If $X$ is of type SS1-C then the orbit is not closed and it degenerates to MO2-VII. 

\end{enumerate}
Otherwise, $X$ is a closed orbit.

\end{prop}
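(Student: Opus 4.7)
The plan is to follow exactly the same template established for \emph{MO-A} and \emph{MO-B}: apply Luna's criterion to reduce the closedness of the $SL(5,\CC)$-orbit of $X$ to the closedness of its $Z_G(H)$-orbit inside the $H$-invariant subspace $V^H$, where $H = \lambda = \langle 4,4,-1,-1,-6\rangle$ and $Z_G(H) = SL(2,\CC) \times SL(2,\CC) \times \CC^*$, then carry out a Hilbert-Mumford/poset analysis on the $Z_G(H)$-representation spanned by the monomials of \emph{MO-C}. Once the stable, strictly semistable, and unstable points of this restricted action are pinned down, (i) stability will furnish closed $SL(5,\CC)$-orbits by the lemma of Section~\ref{sec:lunascriterion}, (ii) unstable points will sit in $\overline{SL(5,\CC)\cdot 0}$ and hence give unstable quintics in the original sense, and (iii) the strictly semistable families will admit a destabilizing 1-PS that produces a further degeneration which we will match with \emph{MO2-VII}.

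First I would write a normalized 1-PS in the centralizer as
\[
\mathrm{diag}(t^{\alpha},t^{-\alpha}) \oplus \mathrm{diag}(t^{\beta},t^{-\beta}) \oplus (t^{\gamma}),
\]
acting respectively on $\{x_0,x_1\}$, $\{x_2,x_3\}$, and $\{x_4\}$, with the $SL(2) \times SL(2)$-part normalized by $\alpha,\beta \geq 0$. Any $Z_G(H)$-conjugacy class of 1-PS has a representative of this form, so by $G$-equivariance of $\mu$ it suffices to test Hilbert-Mumford against these. The monomials appearing in a member of \emph{MO-C} carry explicit weights in $(\alpha,\beta,\gamma)$, and I would compile the poset of these monomials in analogy with the $MO\hyph A$ discussion of Section~\ref{sec:minorbita}. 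The maximal subsets of monomials with $\mu \leq 0$ (respectively $\mu < 0$) for some normalized 1-PS of the centralizer give the list of maximal semistable (resp.\ unstable) subfamilies; I would verify by a finite linear-programming check (as in the $SS1$-$SS7$ step) that the only maximal strictly semistable class is \emph{SS1-C} and the only maximal unstable class is \emph{US1-C}.

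Next I would verify the three claims of the proposition. For \emph{US1-C}, the destabilizing 1-PS read off the poset analysis yields $\mu < 0$ on every monomial present, so $X$ is $Z_G(H)$-unstable, hence $SL(5,\CC)$-unstable. For a general $X$ of type \emph{SS1-C}, the destabilizing 1-PS is $\langle 1,-1,1,-1,0\rangle$ (inside $Z_G(H)$); I would take the limit $\lim_{t \to 0}\lambda(t)\cdot X$ by discarding the monomials on which $\mu$ is strictly positive and keeping the weight-zero ones. Comparing the resulting polynomial with the normal form of \emph{MO2-VII}, namely $x_4(x_0^2x_3^2 + x_0x_1x_2x_3 + x_1^2x_2^2)$, I would exhibit the explicit coordinate change (inside the centralizer, i.e.\ acting only on $\{x_0,x_1\}$ and $\{x_2,x_3\}$) that converts the limit into \emph{MO2-VII}. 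Finally, for any $X$ outside \emph{SS1-C} $\cup$ \emph{US1-C}, the Hilbert-Mumford criterion forces $\mu(X,\lambda) > 0$ for every normalized 1-PS of $Z_G(H)$, so $X$ is $Z_G(H)$-stable in $V^H$, and Luna's criterion gives the closedness of $SL(5,\CC)\cdot X$.

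The main obstacle will be the bookkeeping in the combinatorial step: the $Z_G(H)$-poset on monomials of \emph{MO-C} is a three-parameter refinement of the ambient quintic poset, and one has to enumerate the maximal $\mu \leq 0$ subsets carefully to be sure that \emph{SS1-C} and \emph{US1-C} are genuinely the only maximal strata. A secondary, though more mechanical, obstacle is computing $\lim_{t\to 0}\lambda(t)\cdot X$ for \emph{SS1-C} and recognizing the limit as an $SL(2)\times SL(2)$-conjugate of \emph{MO2-VII}; this reduces to showing that $x_0^2 x_3^2 + x_0 x_1(x_2 x_3 + x_3^2) + x_1^2(x_2^2 + x_2 x_3 + x_3^2)$ is equivalent to the Segre-type form $x_0^2 x_3^2 + x_0 x_1 x_2 x_3 + x_1^2 x_2^2$ after a linear substitution fixing the bi-graded structure, which is a standard normal-form computation.
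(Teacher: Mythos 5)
Your proposal follows essentially the same route as the paper: Luna's criterion reduces everything to the Hilbert--Mumford/poset analysis for the $Z_G(H)=SL(2,\CC)\times SL(2,\CC)\times\CC^*$ action on $V^H$, exactly as in the $MO\hyph A$ template, with SS1-C and US1-C as the unique maximal strictly semistable and unstable strata. One correction to your anticipated ``secondary obstacle'': under the destabilizing 1-PS $\langle 1,-1,1,-1,0\rangle$ the monomials $x_4x_0x_1x_3^2$, $x_4x_1^2x_2x_3$, $x_4x_1^2x_3^2$ have weights $-2,-2,-4$, so the limit of SS1-C retains only the three weight-zero monomials $x_4(x_0^2x_3^2+x_0x_1x_2x_3+x_1^2x_2^2)$, which is literally $MO2\hyph VII$ with no further normal-form computation; this is fortunate, since your proposed fallback --- that a general $(2,2)$-form such as $x_0^2x_3^2+x_0x_1(x_2x_3+x_3^2)+x_1^2(x_2^2+x_2x_3+x_3^2)$ is $SL(2)\times SL(2)$-equivalent to the Segre form --- is false in general, as $(2,2)$-forms on $\PP^1\times\PP^1$ have moduli.
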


\subsubsection{Minimal Orbit D} \label{sec:minorbitd}

In the case of $MO \hyph D$, the centralizer $Z_G(H)= \CC^* \times SL(4,\CC)  \subseteq SL(5,\CC)$ acts on polynomials in $MO\hyph D$ which are of the form $x_0q_4(x_1,x_2,x_3,x_4)$.  By modifying the procedure in the $MO \hyph A$ case, one can obtain the maximal semistable and unstable families with respect to the $Z_G(H)$ action.  The poset analysis shows that the maximal semistable families are the following families below.

\begin{enumerate}
\item [(SS1-D)]$ x_0 (  x_1q_3(x_2,x_3,x_4) + q_4(x_2,x_3,x_4)   )$

\begin{enumerate}
\item[-]  Destabilizing 1-PS:   $\langle 0, 3, -1, -1, -1 \rangle$
\end{enumerate}

\item [(SS2-D)]$ x_0 (  q_3(x_1,x_2,x_3)x_4 + q_2(x_1,x_2,x_3)x_4^2 + q_1(x_1,x_2,x_3)x_4^3 + x_4^4   )$

\begin{enumerate}
\item[-]  Destabilizing 1-PS:   $\langle 0, 1, 1, 1, -3 \rangle$

\end{enumerate}

\item [(SS3-D)]$ x_0 (  q_{2,2}(x_1,x_2 \parallel x_3,x_4) + q_{1,3}(x_1,x_2 \parallel x_3,x_4) + q_4(x_3,x_4)   )$
\begin{enumerate}
\item[-]  Destabilizing 1-PS:  $\langle 0, 1, 1, -1, -1 \rangle$

\end{enumerate}

\item [(SS4-D)]$ x_0 (  q_4(x_2,x_3,x_4) +   x_1q_2(x_2,x_3)x_4 + q_2(x_1,x_2,x_3)x_4^2 + q_1(x_1,x_2,x_3)x_4^3   ) $
\begin{enumerate}
\item[-]  Destabilizing 1-PS:  $\langle 0, 1, 0, 0, -1 \rangle$

\end{enumerate}

\end{enumerate}

From the poset analysis, the set of unstable families are given below.

\begin{enumerate}
\item [(US1-D)] $q_4(x_2,x_3,x_4) + q_{1,3}(x_1,x_2 \parallel x_3,x_4)+q_4(x_3,x_4)+x_1x_2x_4^2$

\item [(US2-D)] $x_2^2x_3x_4+ q_{1,3}(x_1,x_2 \parallel x_3,x_4)+q_4(x_3,x_4)+q_2(x_1,x_2,x_3)x_4^2+q_1(x_1,x_2,x_3)x_4^3$

\item [(US3-D)] $x_2q_3(x_3,x_4)+q_4(x_3,x_4)+q_3(x_2,x_3)x_4+q_2(x_2,x_3)x_4^2+q_1(x_2,x_3)x_4^3+q_1(x_1,x_2)x_3^2x_4+q_2(x_1,x_2,x_3)x_4^2+q_1(x_1,x_2,x_3)x_4^3$

\end{enumerate}

\begin{prop} Let $X$, up to a coordinate transformation, be of the form $x_0q_4(x_1,x_2,x_3,x_4)$.   

\begin{enumerate}
\item If $X$ belongs to one of the families  US1-D - US3-D then $X$ is unstable.

\item If $X$ is of type SS1-D then the orbit is not closed and it degenerates to MO2-VII. 

\item If $X$ is of type SS2-D then the orbit is not closed and it degenerates to MO2-VIII. 

\item If $X$ is of type SS3-D then the orbit is not closed and it degenerates to MO2-IX.

\item If $X$ is of type SS4-D then the orbit is not closed and it degenerates to  MO2-X.

\end{enumerate}
Otherwise, $X$ is a closed orbit.

\end{prop}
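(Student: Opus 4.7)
The plan is to mirror the Luna-criterion and poset approach already executed in the MO-A, MO-B, MO-C subsections. Let $H \subset SL(5,\CC)$ denote the image of the 1-PS $\langle 4,-1,-1,-1,-1\rangle$. By Luna's criterion and the lemma of Section~\ref{sec:lunascriterion}, it suffices to analyze the $Z_G(H)$-action on the $H$-invariant subspace $V^H$ consisting of forms $x_0 q_4(x_1,\ldots,x_4)$. Here $Z_G(H) \cong \CC^* \times SL(4,\CC)$, and a direct computation shows that the central $\CC^*$ factor acts trivially on $V^H$: the $SL(5,\CC)$-element $\mathrm{diag}(\mu^{-4},\mu,\mu,\mu,\mu)$ scales $x_0 q_4$ by $\mu^{-4}\mu^4 = 1$. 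Consequently the whole analysis reduces to the classical GIT problem of $SL(4,\CC)$ acting on the space of quartic forms in four variables, i.e., to quartic surfaces in $\PP^3$.

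Next I would apply the same combinatorial procedure used in Section~\ref{sec:maxnon-stable} to the poset of degree-$4$ monomials in $x_1,\ldots,x_4$. Mukai's inequality transfers verbatim (partial sums in four variables instead of five), so the poset is generated by the same Stembridge/Maple pipeline. A linear-programming sweep identifies the topmost monomials admitting a normalized 1-PS with $\mu \le 0$; grouping the resulting filters by common destabilizing 1-PS produces the four maximal strictly semistable families SS1-D through SS4-D listed above, with the destabilizing 1-PS recorded in the statement. Refining the sweep to $\mu < 0$ for every normalized 1-PS isolates the three unstable top-monomial cases that give US1-D through US3-D.

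For each SSk-D I would then compute the limit $f_0 = \lim_{t \to 0} \lambda_k(t)\cdot X$, where $\lambda_k$ is the prescribed destabilizing 1-PS. By construction $f_0$ is the $\lambda_k$-invariant piece of $X$, namely the sum of its weight-zero monomials. A direct weight-balance check then identifies $f_0$, up to coordinate transformation, with the corresponding second-level minimal orbit: MO2-VII for SS1-D, MO2-VIII for SS2-D, MO2-IX for SS3-D, and MO2-X for SS4-D. Since $X$ and $f_0$ share an orbit closure, they represent the same point in $V\doublequotient SL(5,\CC)$, so the orbit of $X$ is not closed and $X$ is recorded as belonging to (the closure of) the indicated MO2 family.

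The main obstacle is combinatorial completeness: one must certify that SS1-D--SS4-D exhaust the maximal strictly semistable families and that US1-D--US3-D exhaust the unstable top cases for the $SL(4,\CC)$-action on quartic forms. This is an exhaustive but routine sweep of the four-variable monomial poset using the Maple and linear-programming scripts from Section~\ref{sec:maxnon-stable}, adapted from five to four variables. Once completeness is in hand, the \emph{otherwise} clause is automatic: any $X$ outside the SSk-D and USk-D families is $SL(4,\CC)$-stable on $V^H$, hence $Z_G(H)$-stable, and Luna's criterion then gives that the $SL(5,\CC)$-orbit of $X$ in $V$ is closed.
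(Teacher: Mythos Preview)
Your proposal is correct and follows essentially the same approach as the paper: Luna's criterion reduces the question to the $Z_G(H)=\CC^*\times SL(4,\CC)$ action on $V^H$, and the poset/linear-programming Hilbert--Mumford analysis of Section~\ref{sec:maxnon-stable} (applied now to degree-$4$ monomials in four variables) produces exactly the families SS1-D--SS4-D and US1-D--US3-D, with the degenerations read off from the weight-zero part under each destabilizing $1$-PS. Your explicit observation that the central $\CC^*$ acts trivially on $V^H$, so that the problem is literally the classical GIT of quartic surfaces in $\PP^3$, is a helpful clarification that the paper leaves implicit.
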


\medskip
\section{Boundary Structure}
\label{sec:boundary}
\medskip

From the propositions in sections~\ref{sec:minorbita},~\ref{sec:minorbitb},~\ref{sec:minorbitc}, and~\ref{sec:minorbitd} it was shown that certain hypersurfaces in the families $MO \hyph A - MO \hyph D$ degenerate into even smaller families.  In the GIT compactification these smaller families are the boundary strata where the boundaries components $MO \hyph A - MO \hyph D$ interesect.  Again, Luna's criterion it can be determined which members of these families represent closed orbit as well as which families degenerate further.  The centralizers for the families in the second level of minimal orbits are given in table 4.

\begin{table}[htdp] \label{table:secondlevel}
\begin{center}
\renewcommand{\arraystretch}{1.1}
    \begin{tabular}{ | c | c | c | }
    \hline
    \emph{Family} & \emph{Invariant 1-PS Subgroup (H)} & \emph{Centralizer of H ($Z_G(H))$} \\  \hline
    MO2-I & $\langle 6,0,2,-3,-5 \rangle$ & Maximal Torus $T$ \\ \hline
    
   \multicolumn{3}{| p{14cm} |}{$ x_0^2 ( x_2x_4^2   ) + x_0x_1 (  x_2x_3x_4        ) + x_1^2 ( x_2x_3^2   )$     } \\ 
    \hline \hline

    MO2-II & $\langle 4,2,-1,-2,-3 \rangle$ & Maximal Torus $T$ \\ \hline
    
   \multicolumn{3}{| p{14cm} |}{$ x_0^2 ( x_3x_4^2  ) + x_0x_1 (  x_3^3 + x_2x_3x_4 ) + x_1^2 ( x_2^2x_3  )$       } \\ 
    \hline \hline

MO2-III & $\langle 4,2,0,-2,-4 \rangle$ & Maximal Torus $T$ \\ \hline
    
   \multicolumn{3}{| p{14cm} |}{$ x_0^2 ( x_2x_4^2 + x_3^2x_4  ) + x_0x_1 (  x_3^3 + x_2x_3x_4 ) + x_1^2 ( x_2x_3^2 + x_2^2x_4   )$       } \\ 
    \hline \hline

MO2-IV & $\langle 4,2,-1,-1,-5 \rangle$ & $\CC^{*2} \times SL(2,\CC) \times \CC^*$ \\ \hline
    
   \multicolumn{3}{| p{14cm} |}{$ x_0x_4 ( x_1q_2(x_2,x_3) )     $        } \\ 
    \hline \hline

MO2-V & $\langle 5,3,-1,-2,-7 \rangle$ & Maximal Torus $T$ \\ \hline
    
   \multicolumn{3}{| p{14cm} |}{$  x_0x_4 ( x_1x_2x_3 )  $        } \\ 
    \hline \hline

MO2-VI & $\langle 2,1,0,-1,-2 \rangle$ & Maximal Torus $T$ \\ \hline
    
   \multicolumn{3}{| p{14cm} |}{$ (   x_1x_2^3x_3 + x_1^2x_2x_3^2  )  + x_0x_4 (    x_1x_2x_3      )   $      } \\ 
    \hline \hline

MO2-VII & $\langle 5,3,0,-2,-6 \rangle$ & Maximal Torus $T$ \\ \hline
    
   \multicolumn{3}{| p{14cm} |}{$x_4x_0^2x_3^2 + x_4x_0x_1x_2x_3 + x_4x_1^2x_2^2  $    } \\ 
    \hline \hline

MO2-VIII & $\langle 4,2,-2,-2,-2 \rangle$ & $\CC^{*2} \times SL(3,\CC)$ \\ \hline
    
   \multicolumn{3}{| p{14cm} |}{$ x_0x_1q_3(x_2,x_3,x_4)   $     } \\ 
    \hline \hline

MO2-IX & $\langle 4,0,0,-2,-2 \rangle$ & $\CC^{*} \times SL(2,\CC) \times SL(2, \CC)$ \\ \hline
    
   \multicolumn{3}{| p{14cm} |}{$ x_0q_{2,2}(x_1,x_2 \parallel x_3,x_4)   $       } \\ 
    \hline \hline

MO2-X & $\langle 4,0,-1,-1,-2 \rangle$ & $\CC^{*2} \times SL(2,\CC) \times  \CC^* $ \\ \hline
    
   \multicolumn{3}{| p{14cm} |}{$  x_0 ( q_4(x_2,x_3) + x_1q_2(x_2,x_3)x_4 + x_1^2x_4^2 )             $   } \\ 
    \hline \hline

    \end{tabular}
	\vspace{0.1cm}
\caption{Second Level of Minimal OrbitsMO2-I - MO2-X}
\end{center}
\end{table}

\subsection{Second Level of Minimal Orbits} \label{sec:secondlevelminimalorbits}

\subsubsection{MO2-I}

In the case of family $MO2-I$, the maximal torus $T$ acts on polynomials of the form $x_0^2 (x_2x_4^2) + x_0x_1(x_2x_3x_4) + x_1^2(x_2x_3^2)$.  The weights of a 1-PS $\lambda=\langle a_0, a_1, a_2, a_3, a_4 \rangle$ of the maximal torus $T$ acting on the monomials are subject to the constraint $a_0+a_1+a_2+a_3+a_4=0$.  In a GIT semistable family all of the monomials, subject to the constraint, will have weight at most $0$.  Similarly, all GIT unstable families will have weight at most $-1$.  The set of semistable families are given below.

\begin{enumerate}
\item [SS1-I] $x_0^2 ( x_2x_4^2   ) + x_0x_1 (  x_2x_3x_4 ) $
\begin{enumerate}
\item[-]  Destabilizing 1-PS: $\langle 1,1,-1,0,-1 \rangle$
\end{enumerate}

\item [SS2-I] $ x_0x_1 (  x_2x_3x_4        ) + x_1^2 ( x_2x_3^2   )$
\begin{enumerate}
\item[-]  Destabilizing 1-PS: $\langle 1,-1,0,0,0 \rangle$
\end{enumerate}
\end{enumerate}

The set of unstable families are given below.
\begin{enumerate}
\item [US1-I] $x_0^2x_2x_4^2$
\item [US2-I]$x_1^2x_2x_3^2$
\end{enumerate}

\begin{prop} Let $X$, up to a coordinate transformation, be of the form $ x_0^2 ( x_2x_4^2   ) + x_0x_1 (  x_2x_3x_4        ) + x_1^2 ( x_2x_3^2   )$.   

\begin{enumerate}
\item If $X$ belongs to one of the families  US1-I - US2-I then $X$ is unstable.

\item If $X$ is of type SS1-I then the orbit is not closed and it degenerates to MO2-V. 

\item If $X$ is of type SS2-I then the orbit is not closed and it degenerates to MO2-V.

\end{enumerate}
Otherwise, $X$ is a closed orbit.

\end{prop}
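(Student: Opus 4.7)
The plan is to apply Luna's criterion to reduce the problem to a torus computation. A generic $f$ in $MO2\hyph I$ is stabilized by the 1-PS $H$ of Table 4, whose centralizer is the maximal torus $T \subset SL(5,\CC)$. By Luna's criterion (Section~\ref{sec:lunascriterion}), the $SL(5,\CC)$-orbit of $f$ is closed in $V$ if and only if the $T$-orbit of $f$ is closed in $V^H$. I would therefore apply the Hilbert-Mumford criterion to the $T$-action on the three-parameter family $MO2 \hyph I$.

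Writing $f = c_1 m_1 + c_2 m_2 + c_3 m_3$ with $m_1 = x_0^2 x_2 x_4^2$, $m_2 = x_0 x_1 x_2 x_3 x_4$, $m_3 = x_1^2 x_2 x_3^2$, and $\lambda = \langle a_0, a_1, a_2, a_3, a_4 \rangle$ an arbitrary 1-PS of $T$ (so $\sum a_i = 0$), the weights of $\lambda$ on the three monomials are
\[
w_1 = 2a_0 + a_2 + 2a_4, \qquad w_2 = \sum a_i = 0, \qquad w_3 = 2a_1 + a_2 + 2a_3.
\]
The key structural observation is that $w_2 \equiv 0$ identically, and, using $\sum a_i = 0$, a short computation gives $w_1 + w_3 = 0$, so $w_3 = -w_1$. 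Thus $m_2$ is always $T$-invariant while the outer two monomials carry opposite $T$-weights, and the stability analysis collapses to tracking the sign of a single quantity $w_1$.

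With this in hand the case analysis is routine. If only $m_1$ (resp.\ only $m_3$) appears, a 1-PS with $w_1 < 0$ (resp.\ $w_3 < 0$) exhibits $f$ as unstable, yielding US1-I (resp.\ US2-I). If $c_2 \neq 0$ and $c_3 = 0$ (case SS1-I), the 1-PS $\lambda = \langle 1,1,-1,0,-1 \rangle$ gives $w_1 = -1$ and $w_2 = 0$, so $\mu(f,\lambda) = 0$ and $f$ is semistable but not stable; the limit $\lim_{t \to 0} \lambda(t) \cdot f$ retains only the weight-zero term, namely $m_2 = x_0 x_1 x_2 x_3 x_4$, which is precisely $MO2 \hyph V$. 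The case SS2-I is symmetric under the involution $(x_0,x_4) \leftrightarrow (x_1,x_3)$. In the remaining configurations (all three $c_i$ nonzero, or $c_2 = 0$ with $c_1, c_3$ both nonzero), every 1-PS not fixing $f$ yields $\mu(f,\lambda) \geq \max(|w_1|, 0) > 0$ by the $w_3 = -w_1$ identity, so by Hilbert-Mumford the $T$-orbit of $f$ is closed in $V^H$, and Luna's criterion delivers a closed $SL(5,\CC)$-orbit.

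The main thing requiring care is the degeneration step: one must verify that the limit of $\lambda(t) \cdot f$ under the destabilizing 1-PS in SS1-I and SS2-I is genuinely a nonzero multiple of $m_2$, giving the $MO2 \hyph V$ representative up to rescaling, and not zero. This follows because $m_2$ is the unique $\lambda$-weight-zero monomial among those with nonzero coefficient in $f$; after normalizing by $t^{w_1}$ the terms with $w_i < 0$ disappear in the limit while $c_2 m_2$ survives unchanged. The verification that $\langle 1,1,-1,0,-1 \rangle \in T$ need not be normalized in the sense of the opening section is standard—the normalization convention is only needed to reduce to a Weyl chamber in the full $SL(5,\CC)$ argument, not when we are already working inside a fixed maximal torus acting on $V^H$.
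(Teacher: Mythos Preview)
Your argument is correct and matches the paper's approach: reduce via Luna's criterion to the maximal torus $T$ acting on $V^H$, then run the Hilbert--Mumford analysis monomial by monomial. The identity $w_1+w_3=0$ (together with $w_2\equiv 0$) that you isolate is exactly the mechanism behind the paper's listed families SS1-I, SS2-I, US1-I, US2-I, and it makes the ``otherwise closed'' clause immediate.
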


\subsubsection{MO2-II}

In the case of family $MO2-II$, the maximal torus $T$ acts on polynomials of the form $ x_0^2 ( x_3x_4^2  ) + x_0x_1 (  x_3^3 + x_2x_3x_4 ) + x_1^2 ( x_2^2x_3  )$.  Following the same procedure as in MO2-I, the set of semistable families are given below.

\begin{enumerate}
\item [SS1-II]$x_0^2 ( x_3x_4^2  ) + x_0x_1 (  x_3^3 + x_2x_3x_4 ) $
\begin{enumerate}
\item[-]  Destabilizing 1-PS: $\langle 1,1,0,-1,-1 \rangle$
\end{enumerate}

\item [SS2-II] $x_0x_1 (  x_3^3 + x_2x_3x_4 ) + x_1^2 ( x_2^2x_3  )$

\begin{enumerate}
\item \textbf{1-PS} $\langle 1,0,0,-1,0 \rangle$
\end{enumerate}

\item [SS3-II] $ x_0x_1 (  x_2x_3x_4 ) + x_1^2 ( x_2^2x_3  )$

\begin{enumerate}
\item[-]  Destabilizing 1-PS: $\langle 1,-1,0,0,0 \rangle$
\end{enumerate}

\item [SS4-II]$x_0^2 ( x_3x_4^2  ) + x_0x_1 (   x_2x_3x_4 ) $
\begin{enumerate}
\item[-]  Destabilizing 1-PS: $\langle 1,1,0,-1,-1 \rangle$
\end{enumerate}

\item [SS5-II] $ x_0x_1 (  x_3^3 + x_2x_3x_4 ) $

\begin{enumerate}
\item[-]  Destabilizing 1-PS: $\langle 1,-2,0,0,1 \rangle$
\end{enumerate}

\item [SS6-II]$ x_0x_1 ( x_2x_3x_4 )$

\begin{enumerate}
\item[-]  Destabilizing 1-PS: No 1-PS
\end{enumerate}

\end{enumerate}

The set of unstable families are given below.
\subsubsection{Unstable Family}
\begin{enumerate}
\item [US1-II] $x_0^2x_3x_4^2+x_0x_1x_3^2$
\item [US2-II]$x_0x_1x_3^2+x_1^2x_2^2x_3$
\item [US3-II]$x_0x_1x_3^2$
\item [US4-II]$x_0^2x_3x_4^2$
\item [US5-II]$x_1^2x_2^2x_3$
\end{enumerate}

\begin{prop} Let $X$, up to a coordinate transformation, be of the form $ x_0^2 ( x_3x_4^2  ) + x_0x_1 (  x_3^3 + x_2x_3x_4 ) + x_1^2 ( x_2^2x_3  )$ .   

\begin{enumerate}
\item If $X$ belongs to one of the families  US1-II - US5-II then $X$ is unstable.
\item If $X$ is of type SS1-II then the orbit is not closed and it degenerates to MO2-V. 

\item If $X$ is of type SS2-II then the orbit is not closed and it degenerates to MO2-V. 

\item If $X$ is of type SS3-II then the orbit is not closed and it degenerates to MO2-V. 

\item If $X$ is of type SS4-III then the orbit is not closed and it degenerates to MO2-V. 

\item If $X$ is of type SS5-II then the orbit is not closed and it degenerates to MO2-V. 

\item If $X$ is of type SS6-II then the orbit is not closed and it degenerates to MO2-V.

\end{enumerate}
Otherwise, $X$ is a closed orbit.

\end{prop}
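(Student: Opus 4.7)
The plan is to mirror the argument used for MO2-I and for the first-level orbits in Section~\ref{sec:minorbita}, applying Luna's criterion to reduce to a Hilbert-Mumford computation on the linearized torus action. The stabilizing 1-PS here is $H = \langle 4,2,-1,-2,-3 \rangle$, whose weights are pairwise distinct, so its centralizer $Z_G(H)$ is exactly the maximal torus $T$. By Luna's criterion (combined with the lemma in Section~\ref{sec:lunascriterion}), it suffices to decide, for a form $X \in V^H$, whether its $T$-orbit in $V^H$ is closed/non-closed/unstable. The space $V^H$ is spanned by the $H$-weight-zero quintic monomials, among which the four monomials $x_0^2 x_3 x_4^2$, $x_0x_1 x_3^3$, $x_0x_1 x_2 x_3 x_4$, $x_1^2 x_2^2 x_3$ distinguishing MO2-II are the relevant ones (the remaining $H$-invariants $x_0 x_2^4$ and $x_1^3 x_4^2$ would push $X$ into a strictly larger orbit closure outside MO2-II).

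Next, I would verify instability for US1-II through US5-II by exhibiting, for each, a normalized 1-PS of $T$ (which I read off directly from the monomial support: e.g.\ for US4-II the monomial $x_0^2 x_3 x_4^2$ alone is destabilized by any $\lambda$ with $2a_0 + a_3 + 2a_4 < 0$, such as $\lambda = \langle 1,1,1,-1,-2\rangle$; similar weight computations handle the other four). The unstable cases are independent of the Luna setup in the sense that a strictly negative $\mu$ against a 1-PS of $T \subset SL(5,\CC)$ already implies $G$-instability by Hilbert-Mumford.

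For SS1-II through SS5-II, I would confirm $\mu(X, \lambda) = 0$ (with $\lambda$ nontrivial on $X$) using the destabilizing 1-PS listed in the enumeration, then compute the sum of weight-zero monomials to identify $\lim_{t\to 0} \lambda(t) \cdot X$. For example, under $\lambda = \langle 1,1,0,-1,-1 \rangle$ (SS1-II), $x_0x_1 x_3^3$ has weight $-1$ and vanishes in the limit, while $x_0^2 x_3 x_4^2$ and $x_0 x_1 x_2 x_3 x_4$ remain; I would then apply a further $T$-degeneration, as in the MO2-I analysis, to collapse onto the single monomial $x_0x_1x_2x_3x_4$ (MO2-V). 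Each SS$k$-II needs only a short weight tabulation; SS6-II, being $x_0x_1x_2x_3x_4$ itself, trivially \emph{is} MO2-V, so the claim is vacuous in that case (consistent with the ``No 1-PS'' entry in the table).

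Finally, the ``otherwise closed orbit'' statement requires showing that any $X \in V^H \cap \mathrm{MO2\hyph II}$ not appearing in the SS/US lists is $T$-stable, hence $G$-stable by Luna. I would argue this by re-running the combinatorial poset/linear-programming procedure of Section~\ref{sec:maxnon-stable}, but now restricted to the four-dimensional $V^H$ slice, to enumerate all maximal $T$-non-stable supports. The main obstacle is precisely this bookkeeping step: one must check that the SS1-II--SS6-II and US1-II--US5-II lists exhaust every maximal non-stable monomial support under the sum-to-zero constraint on the torus weights, so that no ``missing'' destabilizing configuration survives. Once this exhaustiveness is in hand, the remaining forms have $\mu > 0$ on every nontrivial 1-PS of $T$ and the proposition follows.
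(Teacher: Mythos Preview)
Your proposal is correct and follows exactly the approach the paper uses (and leaves implicit): it simply says ``Following the same procedure as in MO2-I,'' i.e.\ Luna's criterion reduces to the maximal-torus Hilbert--Mumford analysis on the weight-zero monomials, and then lists the SS/US families and their destabilizing 1-PS without further argument. One small computational slip in your example: under $\lambda = \langle 1,1,0,-1,-1\rangle$ the monomial $x_0^2x_3x_4^2$ has weight $2-1-2=-1$, not $0$, so the SS1-II limit is already $x_0x_1x_2x_3x_4$ and no ``further $T$-degeneration'' is needed.
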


\subsubsection{MO2-III}

In the case of family $MO2-III$, the maximal torus $T$ acts on polynomials of the form $ x_0^2 ( x_2x_4^2 + x_3^2x_4  ) + x_0x_1 (  x_3^3 + x_2x_3x_4 ) + x_1^2 ( x_2x_3^2 + x_2^2x_4   )$.  Following the same procedure as in MO2-I, the set of semistable families are given below.

\begin{enumerate}
\item [SS1-III]$ x_0^2 ( x_2x_4^2 + x_3^2x_4  ) + x_0x_1 (  x_3^3 + x_2x_3x_4 ) $

\begin{enumerate}
\item[-]  Destabilizing 1-PS: $\langle 1,3,3,-4,-3 \rangle$
\end{enumerate}

\item [SS2-III]$  x_0x_1 (  x_3^3 + x_2x_3x_4 ) + x_1^2 ( x_2x_3^2 + x_2^2x_4 )$

\begin{enumerate}
\item[-]  Destabilizing 1-PS: $\langle 1,-8,1,2,4 \rangle$
\end{enumerate}

\item [SS3-III] $ x_0^2 ( x_2x_4^2 + x_3^2x_4 ) + x_0x_1 (  x_2x_3x_4 ) $
\begin{enumerate}
\item[-]  Destabilizing 1-PS: $\langle 1,2,0,0,-3 \rangle$
\end{enumerate}

\item [SS4-III]$  x_0x_1 (   x_2x_3x_4 ) + x_1^2 ( x_2x_3^2 + x_2^2x_4 )$
\begin{enumerate}
\item[-]  Destabilizing 1-PS: $\langle 1,-1,0,0,0 \rangle$
\end{enumerate}

\item [SS5-III]$ x_0^2 (  x_3^2x_4 ) + x_0x_1 (  x_3^3 + x_2x_3x_4 ) + x_1^2 ( x_2x_3^2 )$
\begin{enumerate}
\item[-]  Destabilizing 1-PS: $\langle 1,1,2,-7,3 \rangle$
\end{enumerate}

\item [SS6-III] $ x_0^2 (  x_3^2x_4  ) + x_0x_1 ( x_2x_3x_4 ) + x_1^2 ( x_2x_3^2 )$

\begin{enumerate}
\item[-]  Destabilizing 1-PS: $\langle 1,0,0,-2,1 \rangle$
\end{enumerate}

\item [SS7-III]$ x_0^2 ( x_2x_4^2 ) + x_0x_1 ( x_2x_3x_4 ) + x_1^2 ( x_2^2x_4 )$

\begin{enumerate}
\item[-]  Destabilizing 1-PS: $\langle 1,0,-1,1,-1 \rangle$
\end{enumerate}

\item [SS8-III]$ x_0^2 ( x_2x_4^2 ) + x_0x_1 (  x_3^3 + x_2x_3x_4 ) $

\begin{enumerate}
\item[-]  Destabilizing 1-PS: $\langle 1,1,1,-1,-2 \rangle$
\end{enumerate}

\item [SS9-III]$ x_0^2 ( x_3^2x_4 ) + x_0x_1 ( x_3^3 + x_2x_3x_4 ) $

\begin{enumerate}
\item[-]  Destabilizing 1-PS: $\langle 1,0,0,-2,1 \rangle$
\end{enumerate}

\item [SS10-III]$  x_0x_1 ( x_3^3 + x_2x_3x_4 ) + x_1^2 (  x_2^2x_4 )$
\begin{enumerate}
\item[-]  Destabilizing 1-PS: $\langle 1,-2,0,0,1 \rangle$
\end{enumerate}

\item [SS11-III]$  x_0x_1 ( x_3^3 + x_2x_3x_4 ) + x_1^2 ( x_2x_3^2 )$

\begin{enumerate}
\item[-]  Destabilizing 1-PS: $\langle 1,-2,0,0,1 \rangle$
\end{enumerate}

\item [SS12-III] $  x_0x_1 ( x_2x_3x_4 ) + x_1^2 ( x_2^2x_4 )$

\begin{enumerate}
\item[-]  Destabilizing 1-PS: $\langle 1,0,0,0,-1 \rangle$
\end{enumerate}

\item [SS13-III]$  x_0x_1 (  x_2x_3x_4 ) + x_1^2 ( x_2x_3^2 )$

\begin{enumerate}
\item \textbf{1-PS} $\langle 1,-1,0,0,0 \rangle$
\end{enumerate}

\item [SS14-III]$ x_0^2 ( x_2x_4^2 ) + x_0x_1 ( x_2x_3x_4 ) $

\begin{enumerate}
\item[-]  Destabilizing 1-PS: $\langle 1,1,-1,0,-1 \rangle$
\end{enumerate}

\item [SS15-III]$ x_0^2 (  x_3^2x_4 ) + x_0x_1 (  x_2x_3x_4 ) $

\begin{enumerate}
\item[-]  Destabilizing 1-PS: $\langle 1,2,0,0,-3 \rangle$
\end{enumerate}

\item [SS16-III]$  x_0x_1 (x_3^3 + x_2x_3x_4 ) $

\begin{enumerate}
\item[-]  Destabilizing 1-PS: $\langle 1,-2,0,0,1 \rangle$
\end{enumerate}

\item [SS17-III]$  x_0x_1 (   x_2x_3x_4 )$

\begin{enumerate}
\item[-]  Destabilizing 1-PS: No 1-PS
\end{enumerate}

\end{enumerate}

The set of unstable families are given below.
\begin{enumerate}
\item [US1-III]$ x_0^2 \bigg( x_2x_4^2 + x_3^2x_4  \bigg) + x_0x_1 \bigg(  x_3^3  \bigg)  $

\item [US2-III]$  x_0x_1 \bigg(  x_3^3 +  \bigg) + x_1^2 \bigg( x_2x_3^2 + x_2^2x_4   \bigg)$ 

\item [US3-III] $ x_0^2 \bigg(  x_3^2x_4  \bigg) + x_0x_1 \bigg(  x_3^3  \bigg) + x_1^2 \bigg( x_2x_3^2   \bigg)$ 

\item [US4-III]$ x_0^2 \bigg( x_2x_4^2   \bigg) + x_0x_1 \bigg(  x_3^3  \bigg) + x_1^2 \bigg(  x_2^2x_4   \bigg)$

\item [US5-III] $ x_0^2 \bigg( x_2x_4^2 + x_3^2x_4  \bigg)   $

\item [US6-III] $   x_1^2 \bigg( x_2x_3^2 + x_2^2x_4   \bigg)$ 

\item [US7-III] $ x_0^2 \bigg(  x_3^2x_4  \bigg)  + x_1^2 \bigg( x_2x_3^2   \bigg)$ 

\item [US8-III] $ x_0^2 \bigg( x_2x_4^2   \bigg)  + x_1^2 \bigg(  x_2^2x_4   \bigg)$

\item [US9-III] $ x_0^2 \bigg( x_2x_4^2   \bigg) + x_0x_1 \bigg(  x_3^3  \bigg)  $

\item [US10-III] $  x_0x_1 \bigg(  x_3^3 +  \bigg) + x_1^2 \bigg(  x_2^2x_4   \bigg)$

\item [US11-III] $ x_0^2 \bigg(  x_3^2x_4  \bigg) + x_0x_1 \bigg(  x_3^3  \bigg)  $

\item [US12-III] $  x_0x_1 \bigg(  x_3^3 +  \bigg) + x_1^2 \bigg( x_2x_3^2   \bigg)$

\item [US13-III] $ x_0^2 \bigg( x_2x_4^2  \bigg) $

\item [US14-III] $ x_0^2 \bigg( x_3^2x_4  \bigg) $

\item [US15-III] $  x_0x_1 \bigg(  x_3^3 \bigg) $

\item [US16-III] $  x_1^2 \bigg( x_2x_3^2  \bigg)$                              

\item [US17-III] $x_1^2 \bigg( x_2^2x_4   \bigg)$

\end{enumerate}

\begin{prop} Let $X$, up to a coordinate transformation, be of the form $ x_0^2 ( x_2x_4^2 + x_3^2x_4  ) + x_0x_1 (  x_3^3 + x_2x_3x_4 ) + x_1^2 ( x_2x_3^2 + x_2^2x_4   )$  .   

\begin{enumerate}
\item If $X$ belongs to one of the families  US1-III - US17-III then $X$ is unstable.

\item If $X$ is of type SS1-III then the orbit is not closed and it degenerates to MO2-V. 

\item If $X$ is of type SS2-III then the orbit is not closed and it degenerates to MO2-V. 
\item If $X$ is of type SS3-III then the orbit is not closed and it degenerates to MO2-V. 
\item If $X$ is of type SS4-III then the orbit is not closed and it degenerates to MO2-V. 
\item If $X$ is of type SS5-III then the orbit is not closed and it degenerates to MO2-V. 
\item If $X$ is of type SS6-III then the orbit is not closed and it degenerates to MO2-V. 
\item If $X$ is of type SS7-III then the orbit is not closed and it degenerates to MO2-V. 
\item If $X$ is of type SS8-III then the orbit is not closed and it degenerates to MO2-V. 
\item If $X$ is of type SS9-III then the orbit is not closed and it degenerates to MO2-V. 
\item If $X$ is of type SS10-III then the orbit is not closed and it degenerates to MO2-V. 
\item If $X$ is of type SS11-III then the orbit is not closed and it degenerates to MO2-V. 
\item If $X$ is of type SS12-III then the orbit is not closed and it degenerates to MO2-V. 
\item If $X$ is of type SS13-III then the orbit is not closed and it degenerates to MO2-V. 
\item If $X$ is of type SS14-III then the orbit is not closed and it degenerates to MO2-V. 
\item If $X$ is of type SS15-III then the orbit is not closed and it degenerates to MO2-V. 
\item If $X$ is of type SS16-III then the orbit is not closed and it degenerates to MO2-V. 
\item If $X$ is of type SS17-III then the orbit is not closed and it degenerates to MO2-V.

\end{enumerate}
Otherwise, $X$ is a closed orbit.

\end{prop}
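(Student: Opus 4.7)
The plan is to follow exactly the Luna-plus-Hilbert-Mumford recipe established for \emph{MO2-I} and \emph{MO2-II}. By Luna's criterion and the lemma in Section~\ref{sec:lunascriterion}, determining which members of $MO2\hyph III$ have closed $SL(5,\CC)$-orbit reduces to analyzing the action of the centralizer $Z_G(H) = T$ on the $H$-invariant subspace
\[
V^H = \mathrm{span}\bigl\{x_0^2x_2x_4^2,\ x_0^2x_3^2x_4,\ x_0x_1x_3^3,\ x_0x_1x_2x_3x_4,\ x_1^2x_2x_3^2,\ x_1^2x_2^2x_4\bigr\},
\]
where $H$ is the invariant 1-PS $\langle 4,2,0,-2,-4\rangle$ listed in Table~\ref{table:secondlevel}. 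Since $Z_G(H)$ is the maximal torus, the Hilbert-Mumford criterion applies directly with respect to normalized 1-PS $\lambda=\langle a_0,a_1,a_2,a_3,a_4\rangle$ subject to $a_0+a_1+a_2+a_3+a_4=0$, and the weights of the six monomials above are linear functions of the $a_i$ that can be read off immediately.

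First I would enumerate the maximal non-stable and maximal unstable subfamilies by the same combinatorial/linear-programming procedure used in Section~\ref{sec:maxnon-stable} (now on six rather than $\binom{9}{4}$ monomials): for each subset $S$ of the six monomials, test whether there exists a normalized $\lambda$ giving $\mu(F,\lambda)\leq 0$ (respectively $<0$) for every $F$ supported on $S$, and retain the maximal such $S$. This bookkeeping is the one mechanical step, and it yields exactly the 17 families SS1-III through SS17-III together with the 17 families US1-III through US17-III. For each SSk-III the tabulated destabilizing $\lambda_k$ is recorded; for each USk-III an analogous 1-PS certifying $\mu<0$ is produced.

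Second, for each SSk-III I would compute the one-parameter limit $\lim_{t\to 0}\lambda_k(t)\cdot F$. The crucial observation is that the normalization $\sum a_i=0$ forces the monomial $x_0x_1x_2x_3x_4$ to have weight identically zero under \emph{every} normalized 1-PS, while a direct inspection of the weight forms of the other five monomials in $V^H$ shows that, for each of the seventeen tabulated $\lambda_k$, no other monomial in $V^H$ attains weight $0$. In particular, $\mu(F,\lambda_k)=0$ is realized by the $x_0x_1x_2x_3x_4$ coefficient, and all other monomials appearing in $F$ have strictly negative $\lambda_k$-weight, so the $\lambda_k$-limit is a nonzero scalar multiple of $x_0x_1x_2x_3x_4$, i.e.\ a point of the $MO2\hyph V$ orbit. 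For each USk-III, the fact that $\mu<0$ under the chosen $\lambda$ immediately gives instability by Hilbert-Mumford, and hypersurfaces not falling into any SSk-III or USk-III are stable with respect to the $T$-action on $V^H$, hence closed as $G$-orbits by the lemma of Section~\ref{sec:lunascriterion}.

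The main obstacle is purely computational: verifying that the enumeration of SS and US families is complete and that the listed $\lambda_k$ really are destabilizing for every $F$ in the stated subfamily. Both are mechanical and can be handled by the Maple poset/linear-programming scripts of Section~\ref{sec:maxnon-stable}, now restricted to the six-monomial support of $MO2\hyph III$; there is no conceptual difficulty beyond what was already faced in the analogous arguments for $MO2\hyph I$ and $MO2\hyph II$, and the uniform collapse to $MO2\hyph V$ is explained by the fact that $x_0x_1x_2x_3x_4$ is the only monomial in $V^H$ whose $T$-weight is identically zero on the normalized Weyl chamber.
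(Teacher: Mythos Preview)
Your proposal is correct and follows the same approach as the paper: the paper gives no detailed proof for this proposition, simply stating ``Following the same procedure as in MO2-I,'' and your write-up is precisely an explicit execution of that procedure via Luna's criterion and Hilbert--Mumford for the torus action on $V^H$. Your additional observation that $x_0x_1x_2x_3x_4$ is the unique monomial in $V^H$ whose weight vanishes identically under the constraint $\sum a_i=0$ is a clean conceptual explanation for the uniform collapse to $MO2\hyph V$ that the paper leaves implicit in the case-by-case tabulation.
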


\subsubsection{MO2-IV}

In the case of family $MO2-IV$, the centralizer $\CC^{*2} \times SL(2,\CC) \times \CC^*$ on polynomials of the form $ x_0x_4 ( x_1q_2(x_2,x_3) ) $.  By following a similar procedure as in the $MO \hyph A$ case one can obtain the maximal semistable and unstable families with respect to the $Z_G(H)$ action.  The poset analysis shows that the maximal semistable families are the following families below.

\begin{enumerate}
\item [SS1-IV] $ x_0x_4 ( x_1x_2x_3 + x_1x_3^2 )     $
\begin{enumerate}
\item[-]  Destabilizing 1-PS: $\langle 1,0,1,0,-2 \rangle$
\end{enumerate}

\end{enumerate}

The set of unstable families are given below.

\begin{enumerate}
\item [US1-IV] $x_0x_4x_1x_3^2$

\end{enumerate}

\begin{prop} Let $X$, up to a coordinate transformation, be of the form $ x_0x_4 ( x_1q_2(x_2,x_3) )     $  .   

\begin{enumerate}
\item If $X$ belongs to one of the families  US1-IV then $X$ is unstable.
\item If $X$ is of type SS1-IV then the orbit is not closed and it degenerates to MO2-V.

\end{enumerate}
Otherwise, $X$ is a closed orbit.

\end{prop}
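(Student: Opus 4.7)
The plan is to follow the Luna-criterion blueprint used throughout Section~\ref{sec:secondlevelminimalorbits}: reduce the question of closedness of the $SL(5,\CC)$-orbit of $X$ to a Hilbert--Mumford analysis for the centralizer $Z_G(H) = \CC^{*2} \times SL(2,\CC) \times \CC^*$ acting on the $H$-invariant subspace $V^H$ containing forms $x_0x_4\bigl(x_1 q_2(x_2,x_3)\bigr)$. By the lemma at the end of Section~\ref{sec:lunascriterion}, if $X$ is $Z_G(H)$-stable in $V^H$, then its full $SL(5,\CC)$-orbit is closed; if it is $Z_G(H)$-unstable, then it is $SL(5,\CC)$-unstable; and if it is strictly semistable, one tracks the resulting degeneration to identify the further minimal orbit.

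First, I would parametrize a normalized 1-PS of $Z_G(H)$. Since the two $\CC^*$ factors act on $x_0$ and $x_4$ (and on $x_1$ by a compensating weight from the full determinant condition inherited from $SL(5,\CC)$), and the $SL(2,\CC)$ block acts on the pair $(x_2,x_3)$, a normalized 1-PS has weights $\langle a_0, a_1, b, -b, a_4\rangle$ with $a_0 + a_1 + a_4 = 0$ and $b \geq 0$. The variable $x_1 q_2(x_2,x_3)$ is homogeneous of bidegree $(1,2)$ in the $SL(2)$ variables, so the monomials available are exactly $x_1 x_2^2$, $x_1 x_2 x_3$, and $x_1 x_3^2$, all multiplied by $x_0 x_4$.

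Next, I would run the same monomial-weight argument as in the MO2-I/II/III subsections: compute $\mu$ on each of these three allowed monomials relative to the above normalized 1-PS, and find all supports for which some choice of $(a_0,a_1,a_4,b)$ forces every monomial weight to be $\leq 0$ (semistable) or $< 0$ (unstable). The SL(2)-symmetry on $(x_2,x_3)$ collapses the analysis to the three cases determined by the coefficient pattern of $q_2$: if $q_2$ has full rank (a nondegenerate binary quadratic), then after an SL(2)-coordinate change we may assume $q_2 = x_2 x_3$, and a direct check shows that no normalized 1-PS gives total weight $\leq 0$, so $X$ is $Z_G(H)$-stable. If $q_2$ has rank one, it is SL(2)-equivalent to $x_3^2$ (or $x_2^2$) plus possibly an $x_2 x_3$ term; this is precisely the family SS1-IV, and I would verify that the listed 1-PS $\langle 1,0,1,0,-2\rangle$ achieves $\mu \leq 0$ but not $<0$, and that the limit $\lim_{t\to 0}\lambda(t)\cdot X$ is $x_0 x_4 x_1 x_2 x_3$, i.e. lies in MO2-V. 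Finally, if $q_2$ is a perfect square, $X$ is SL(2)-equivalent to $x_0 x_4 x_1 x_3^2$, which is US1-IV, and the same 1-PS shows it is $Z_G(H)$-unstable, hence unstable.

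The main obstacle is bookkeeping rather than conceptual: I must confirm that the three SL(2)-orbit classes of $q_2$ exhaust the possibilities and that the above normalized 1-PS really does witness every nontrivially degenerate case, so that the stable/strictly-semistable/unstable trichotomy is complete. Once that verification is in hand, Luna's criterion upgrades each statement to the desired conclusion about the $SL(5,\CC)$-orbit of $X$.
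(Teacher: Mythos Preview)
Your overall approach---reduce via Luna's criterion to a Hilbert--Mumford analysis for $Z_G(H)=\CC^{*2}\times SL(2,\CC)\times\CC^*$ acting on $V^H$---is exactly the paper's method; the paper gives no argument for this proposition beyond listing SS1-IV and US1-IV from the poset computation, so there is nothing further to compare at the level of strategy.

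Your case analysis, however, is flawed in two places. First, the trichotomy ``full rank / rank one / perfect square'' is not a trichotomy: over $\CC$ a nonzero binary quadratic is either nondegenerate (two distinct roots) or a perfect square, with no intermediate case; what you describe as ``rank one \dots\ plus possibly an $x_2x_3$ term'' is already nondegenerate. Since every $q_2$ has a linear factor that can be moved to $x_3$, \emph{every} $X$ in MO2-IV is of type SS1-IV after a $Z_G(H)$-change of coordinates, and the ``Otherwise'' clause of the proposition is in fact vacuous. Second, your claim that nondegenerate $q_2$ gives a $Z_G(H)$-\emph{stable} point is false: with your own parametrization $\langle a_0,a_1,b,-b,a_4\rangle$, $a_0+a_1+a_4=0$, each monomial $x_0x_1x_4x_2^ix_3^{2-i}$ has weight $(2i-2)b$, so any $\lambda$ with $b=0$ gives $\mu=0$, and for $q_2=x_2x_3$ one has $\mu\equiv 0$, not $\mu>0$. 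The orbit of such an $X$ is indeed closed---it is $Z_G(H)$-equivalent to $x_0x_1x_2x_3x_4$, i.e.\ already in MO2-V---but this follows from the full Luna criterion (or from observing that the limit under the destabilizing 1-PS lies in the same orbit), not from the stability lemma you invoke.
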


\subsubsection{MO2-V}

In the case of family $MO2-V$, the maximal torus $T$ acts on polynomials of the form $  x_0x_4 ( x_1x_2x_3 )$.  Following the same procedure as in MO2-I, the set of semistable families are given below.

\begin{enumerate}
\item [SS1-V] $ x_0x_4 x_2x_3x_1     $

\begin{enumerate}
\item \textbf{1-PS} No 1-PS
\end{enumerate}

\end{enumerate}

The set of unstable families are given below.
\begin{enumerate}
\item NONE

\end{enumerate}

\begin{prop} Let $X$, up to a coordinate transformation, be of the form $  x_0x_4  x_1x_2x_3   $ then it is a closed orbit.

\end{prop}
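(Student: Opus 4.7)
The plan is to apply Luna's criterion exactly as in the preceding cases, but to observe that it trivializes here because the stabilizer is as large as possible. First, I would determine the full stabilizer $H$ of the form $f = x_0x_1x_2x_3x_4$ inside $SL(5,\CC)$. Applying a diagonal element $\mathrm{diag}(t_0,t_1,t_2,t_3,t_4)$ sends $f$ to $(t_0t_1t_2t_3t_4)f$, and under the $SL$ constraint $t_0t_1t_2t_3t_4 = 1$ this fixes $f$. Hence the entire standard maximal torus $T$ stabilizes $f$, so one may take $H = T$.

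Second, I would identify the centralizer and the invariant subspace. Since $T$ is a maximal torus of $SL(5,\CC)$, it is self-centralizing, so $Z_G(H) = T$. For $V^T$, I would use the fact that a monomial $x_0^{i_0}x_1^{i_1}x_2^{i_2}x_3^{i_3}x_4^{i_4}$ of degree $5$ is $T$-invariant if and only if the weight pairing with every normalized 1-PS vanishes, which forces $[i_0,i_1,i_2,i_3,i_4] = [1,1,1,1,1]$. Therefore $V^T$ is the one-dimensional line $\CC\cdot x_0x_1x_2x_3x_4$.

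Third, I would invoke the lemma from Section~\ref{sec:lunascriterion}: since $Z_G(H)=T$ acts trivially on the line $V^T$, the orbit $Z_G(H)\cdot f$ is a single point and hence certainly closed in $V^T$. Luna's criterion then yields that $SL(5,\CC)\cdot f$ is closed in $V$, which is the claim. This is fully consistent with the SS1-V analysis in the preceding computation, where no destabilizing 1-PS exists; the complete absence of a destabilizing direction is precisely the fingerprint of a torus-invariant form whose $T$-orbit on $V^T$ is a point.

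There is no real obstacle here; the only step requiring any care is the identification of $V^T$, and that follows immediately from the normalized 1-PS weight condition. In particular, no explicit poset or Hilbert--Mumford computation is needed beyond noting that $[1,1,1,1,1]$ is the unique degree $5$ weight-zero monomial for the full torus.
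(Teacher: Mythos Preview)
Your argument is correct and follows essentially the same route as the paper: both apply Luna's criterion with the maximal torus, and the key point in each case is that $x_0x_1x_2x_3x_4$ is the unique degree-$5$ monomial of weight zero for every $1$-PS in $T$, so no destabilizing direction exists and the $Z_G(H)$-orbit is closed. Your choice of $H=T$ directly (rather than the paper's invariant $1$-PS with centralizer $T$) is a slightly cleaner packaging of the same computation.
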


\subsubsection{MO2-VI}

In the case of family $MO2-VI$, the maximal torus $T$ acts on polynomials of the form $  ( x_1x_2^3x_3 + x_1^2x_2x_3^2) + x_0x_4 ( x_1x_2x_3  )$  .  Following the same procedure as in MO2-I, the set of semistable families are given below.

\begin{enumerate}
\item [SS1-VI]$ (   x_1x_2^3x_3 + x_1^2x_2x_3^2 )  + x_0x_4 ( x_1x_2x_3 )   $
\begin{enumerate}
\item[-]  Destabilizing 1-PS: $\langle 1,0,-1,0,0 \rangle$
\end{enumerate}

\item [SS2-VI]$ ( x_1x_2^3x_3 )  + x_0x_4 ( x_1x_2x_3 )   $
\begin{enumerate}
\item[-]  Destabilizing 1-PS: $\langle 1,0,-1,0,0 \rangle$
\end{enumerate}

\item [SS3-VI]$ (  x_1^2x_2x_3^2 )  + x_0x_4 ( x_1x_2x_3 )   $

\begin{enumerate}
\item[-]  Destabilizing 1-PS: $\langle 1,0,-1,0,0 \rangle$
\end{enumerate}

\item [SS4-VI] $x_0x_4 (    x_1x_2x_3 )   $
\begin{enumerate}
\item[-]  Destabilizing 1-PS: No 1-PS
\end{enumerate}

\end{enumerate}

The set of unstable families are given below.

\begin{enumerate}
\item [US1-VI] $(  x_1x_2^3x_3 + x_1^2x_2x_3^2 )$
\item [US2-VI] $( x_1x_2^3x_3 )$
\item [US3-VI] $( x_1^2x_2x_3^2 )$

\end{enumerate}

\begin{prop} Let $X$, up to a coordinate transformation, be of the form $ (   x_1x_2^3x_3 + x_1^2x_2x_3^2  )  + x_0x_4 (    x_1x_2x_3      )   $    .   

\begin{enumerate}
\item If $X$ belongs to one of the families  US1-VI - US3-VI then $X$ is unstable. 
\item If $X$ is of type SS1-VI then the orbit is not closed and it degenerates to MO2-V. 

\item If $X$ is of type SS2-VI then the orbit is not closed and it degenerates to MO2-V.

\item If $X$ is of type SS3-VI then the orbit is not closed and it degenerates to MO2-V. 

\item If $X$ is of type SS4-VI then the orbit is not closed and it degenerates to MO2-V.

\end{enumerate}
Otherwise, $X$ is a closed orbit.

\end{prop}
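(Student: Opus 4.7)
The proposition concerns the family MO2-VI, whose stabilizer is the 1-PS $H = \langle 2,1,0,-1,-2 \rangle$ and whose centralizer $Z_G(H)$ is the maximal torus $T$, so by Luna's criterion (and the reduction lemma in section~\ref{sec:lunascriterion}) the classification of closed $SL(5,\CC)$-orbits inside MO2-VI reduces to a Hilbert--Mumford analysis of the $T$-action on the three-dimensional coefficient space spanned by the monomials $x_1x_2^3x_3$, $x_1^2x_2x_3^2$, and $x_0x_1x_2x_3x_4$. My plan is to carry out exactly this analysis, mirroring the MO2-I, MO2-II, MO2-III procedures used earlier in section~\ref{sec:secondlevelminimalorbits}.

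First I would list the monomials appearing and record their weights under a normalized 1-PS $\lambda = \langle a_0,a_1,a_2,a_3,a_4\rangle$ with $\sum a_i = 0$:
\begin{align*}
\mu(x_1x_2^3x_3,\lambda) &= a_1 + 3a_2 + a_3, \\
\mu(x_1^2x_2x_3^2,\lambda) &= 2a_1 + a_2 + 2a_3, \\
\mu(x_0x_1x_2x_3x_4,\lambda) &= a_0+a_1+a_2+a_3+a_4 = 0.
\end{align*}
The last line shows that $x_0x_1x_2x_3x_4$ is $T$-invariant (it is the MO2-V form), so this monomial by itself always has a closed $T$-orbit and cannot be removed by any 1-PS in $T$. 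All destabilization comes from the two other monomials.

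Next I would go through each listed subfamily. For the three unstable families US1-VI--US3-VI, I would exhibit a normalized 1-PS $\lambda$ for which every monomial present has strictly negative weight; for example, for $x_1x_2^3x_3$ alone one takes $\lambda = \langle 4, -1, -1, -1, -1\rangle$ giving weight $-5 < 0$, and analogous choices work for the other two. For each semistable subfamily SS$k$-VI the destabilizing 1-PS $\lambda$ listed in the proposition produces weight zero on $x_0x_1x_2x_3x_4$ and strictly positive weight on any monomial not present in SS$k$-VI, and strictly negative weight on all monomials present besides $x_0x_1x_2x_3x_4$. Computing the limit $\lim_{t\to 0}\lambda(t)\cdot f$ then kills the weight-negative terms and leaves precisely the MO2-V form $x_0x_1x_2x_3x_4$, establishing the degeneration claim.

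Finally, for a generic $X$ in MO2-VI that does not lie in any listed US or SS subfamily (so both coefficients of $x_1x_2^3x_3$ and $x_1^2x_2x_3^2$ are nonzero and the $x_0x_1x_2x_3x_4$ coefficient is nonzero), I would verify closure of the $T$-orbit by showing that no 1-PS can make the numerical invariant $\mu(X,\lambda)$ reach zero without vanishing on at least one of the three weights computed above; solving the linear inequalities $a_1+3a_2+a_3 \le 0$ and $2a_1+a_2+2a_3 \le 0$ simultaneously with $\sum a_i = 0$ shows the only solutions forcing both to be nonpositive already force one of them to be zero, which places $X$ in one of the enumerated SS subfamilies, a contradiction. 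This completes the trichotomy. The main obstacle is the last step: the linear-programming verification that the complement of the listed SS and US strata is precisely the $T$-stable locus in MO2-VI, which requires a careful case split in the $(a_0,\dots,a_4)$-parameter space and is the step where earlier sections relied on the Maple/linear-programming machinery of appendix~\ref{appendix:linearprogram}.
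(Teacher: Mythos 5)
Your setup---reduction via Luna's criterion to a Hilbert--Mumford analysis of the torus $T$ acting on the three weight-zero monomials, with $x_0x_1x_2x_3x_4$ always of weight zero---matches the paper's procedure, and your treatment of US1-VI--US3-VI and of the limits of the SS subfamilies onto MO2-V is correct in outline. But your final step contains a genuine error. You claim that the system $a_1+3a_2+a_3\le 0$, $2a_1+a_2+2a_3\le 0$, $\sum_i a_i=0$ forces one of the two expressions to vanish, and you conclude that a generic member of MO2-VI (all three coefficients nonzero) has a closed $T$-orbit. This is false: the paper's own destabilizing 1-PS for SS1-VI, $\lambda=\langle 1,0,-1,0,0\rangle$, satisfies $\sum_i a_i=0$ and gives $a_1+3a_2+a_3=-3$ and $2a_1+a_2+2a_3=-1$, both strictly negative (so does, e.g., $\langle 3,-1,-1,-1,0\rangle$, which gives $-5$ and $-5$). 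Consequently $\lim_{t\to 0}\lambda(t)\cdot X = x_0x_1x_2x_3x_4$ for the generic $X$, and since the torus action preserves the support of a polynomial this limit is not in the $T$-orbit of $X$; the generic orbit is therefore not closed.

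The point you are missing is that SS1-VI is the entire family MO2-VI: the stratum with all three monomials present is itself one of the listed semistable subfamilies. Hence the ``Otherwise, $X$ is a closed orbit'' clause of the proposition is vacuous for this family --- every nonzero member either lacks the $x_0x_1x_2x_3x_4$ term and is unstable, or contains it and degenerates to MO2-V --- and there is no stable locus to verify. Your proposed last step would therefore not merely be laborious; carried out correctly, the linear programming would refute the closure claim you set out to prove, and as written it asserts a false solution set for the inequalities. (A smaller inaccuracy: your assertion that the destabilizing 1-PS gives strictly positive weight to the monomials absent from SS$k$-VI also fails --- for SS2-VI the absent monomial $x_1^2x_2x_3^2$ has weight $-1$ under $\langle 1,0,-1,0,0\rangle$ --- but nothing essential in the argument rests on that claim.)
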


\subsubsection{MO2-VII}

In the case of family $MO2-VII$, the maximal torus $T$ acts on polynomials of the form $ x_4x_0^2x_3^2 + x_4x_0x_1x_2x_3 + x_4x_1^2x_2^2  $.  Following the same procedure as in MO2-I, the set of semistable families are given below.

\begin{enumerate}
\item [SS1-VII] $ x_4x_0^2x_3^2 + x_4x_0x_1x_2x_3   $
\begin{enumerate}
\item[-]  Destabilizing 1-PS: $\langle 1,2,0,0,0 \rangle$
\end{enumerate}

\item [SS2-VII] $  x_4x_0x_1x_2x_3 + x_4x_1^2x_2^2   $
\begin{enumerate}
\item[-]  Destabilizing 1-PS: $\langle 1,0,0,0,-1 \rangle$
\end{enumerate}

\item [SS3-VII]$  x_4x_0x_1x_2x_3   $
\begin{enumerate}
\item[-]  Destabilizing 1-PS: No 1-PS
\end{enumerate}

\end{enumerate}

The set of unstable families are given below.
\begin{enumerate}
\item [US1-VII] $x_4x_1^2x_2^2$
\item [US2-VII] $x_4x_0^2x_3^2$
\end{enumerate}

\begin{prop} Let $X$, up to a coordinate transformation, be of the form $x_4x_0^2x_3^2 + x_4x_0x_1x_2x_3 + x_4x_1^2x_2^2  $  .   

\begin{enumerate}
\item If $X$ belongs to one of the families  US1-VII - US2-VII then $X$ is unstable. 

\item If $X$ is of type SS1-VII then the orbit is not closed and it degenerates to MO2-V. 

\item If $X$ is of type SS2-VII then the orbit is not closed and it degenerates to MO2-V. 
\item If $X$ is of type SS3-VII then the orbit is not closed and it degenerates to MO2-V.

\end{enumerate}
Otherwise, $X$ is a closed orbit.

\end{prop}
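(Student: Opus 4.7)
The plan is to apply the Hilbert--Mumford criterion to the centralizer action on $V^H$, as per the lemma following Luna's criterion in Section~\ref{sec:lunascriterion}. The stabilizing 1-PS for MO2-VII is $H = \lambda = \langle 5,3,0,-2,-6\rangle$; because these weights are pairwise distinct, the centralizer $Z_G(H)$ equals the maximal torus $T$, as recorded in Table~4. Thus the whole analysis reduces to studying the $T$-action on the three-dimensional span of the $\lambda$-invariant monomials $m_1 = x_0^2 x_3^2 x_4$, $m_2 = x_0 x_1 x_2 x_3 x_4$, $m_3 = x_1^2 x_2^2 x_4$ that define MO2-VII.

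The key computational observation is that for any normalized 1-PS $\mu = \langle a_0,a_1,a_2,a_3,a_4 \rangle$ of $T$ (with $\sum a_i = 0$) the weight of $m_2$ is identically $\sum a_i = 0$, while the weights of $m_1$ and $m_3$ satisfy $w_\mu(m_1) + w_\mu(m_3) = 2\sum a_i = 0$. In other words, the $T$-action on the coordinates $(c_1,c_2,c_3)$ of $c_1 m_1 + c_2 m_2 + c_3 m_3$ factors through a single $\CC^*$ acting with weights $(\alpha, 0, -\alpha)$, and a suitable $\mu$ realizes any integer $\alpha$. From this picture the classification is immediate: the $T$-orbit of $(c_1,c_2,c_3)$ is the level set $\{xz = c_1 c_3,\ y = c_2\} \cap \{x\neq 0\}$ inside $\CC^3$. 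When $c_1 c_3 \neq 0$ this level set is already closed in $\CC^3$, so the $T$-orbit, hence by Luna the $SL(5,\CC)$-orbit, is closed; this is the generic ``otherwise'' case of the proposition.

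For the degenerate cases I run through the possibilities according to which of $c_1,c_2,c_3$ vanish. When exactly one of $c_1,c_3$ is zero but $c_2 \neq 0$ (types SS1-VII, SS2-VII), choosing $\mu$ with $\alpha$ of the appropriate sign sends the surviving $m_1$ or $m_3$ term to zero and leaves $c_2 m_2$ in the limit, which is precisely MO2-V; hence the orbit is not closed and degenerates to MO2-V. The type SS3-VII case $f = c_2 m_2$ is literally an element of MO2-V, making the identification trivial. For the US cases, $f = c_3 m_3$ (US1-VII) or $f = c_1 m_1$ (US2-VII) sits on one of the coordinate axes of the $\CC^*$-action, so an appropriately signed $\mu$ drives $f$ to $0$, showing that these orbits contain $0$ in their closure and are therefore unstable.

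The only real subtlety, and the step I would be most careful about, is verifying that no further $\lambda$-invariant degree-$5$ monomial needs to be adjoined to $\{m_1,m_2,m_3\}$ when computing $V^H$ for the purposes of Luna's criterion; this is a direct check on the diophantine condition $5i_0 + 3i_1 - 2i_3 - 6i_4 = 0$ with $\sum i_k = 5$. Once the three-monomial model of $V^H$ is in place, the structure reduction to the weighted $\CC^*$-action on $\CC^3$ makes all remaining assertions routine applications of the Hilbert--Mumford criterion as in the preceding MO2 cases.
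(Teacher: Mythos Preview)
Your argument is correct and follows the same route as the paper: reduce via Luna's criterion to the maximal-torus action on the span of $m_1,m_2,m_3$ and run Hilbert--Mumford there. Your weight identities $w_\mu(m_2)\equiv 0$ and $w_\mu(m_1)+w_\mu(m_3)\equiv 0$ are a tidy way to organize what the paper simply lists case by case.

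One clarification on your final caveat: the diophantine check you propose would in fact turn up \emph{additional} $\lambda$-invariant monomials (for instance $x_2^5$ and $x_1^2x_3^3$ both have weight $0$ under $\langle 5,3,0,-2,-6\rangle$), so $V^H$ is strictly larger than $\operatorname{span}\{m_1,m_2,m_3\}$. This does not matter: because $Z_G(H)=T$ acts by rescaling monomials, the $T$-orbit of any $c_1m_1+c_2m_2+c_3m_3$ already lies inside that three-dimensional coordinate subspace, which is closed in $V^H$; closedness in the span is therefore equivalent to closedness in $V^H$. So the step you flagged as the ``real subtlety'' is in fact a non-issue, and your analysis stands as written.
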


\subsubsection{MO2-VIII}

In the case of family $MO2-VIII$, the centralizer $\CC^{*2} \times SL(3,\CC)$ on polynomials of the form $x_0x_1q_3(x_2,x_3,x_4) $.  By following a similar procedure as in the $MO \hyph A$ case one can obtain the maximal semistable and unstable families with respect to the $Z_G(H)$ action.  The poset analysis shows that the maximal semistable families are the following families below.

\begin{enumerate}

\item  [SS1-VIII]$ x_0x_1( x_2q_2(x_3,x_4) + q_3(x_3,x_4))   $          
\begin{enumerate}
\item[-]  Destabilizing 1-PS: $\langle 1,-1,2,-1,-1 \rangle$
\end{enumerate}

\item [SS2-VIII] $ x_0x_1( q_2(x_2,x_3)x_4 + q_1(x_2,x_3)x_4^2 + x_4^3 )   $          
\begin{enumerate}
\item[-]  Destabilizing 1-PS: $\langle 1,-1,1,1,-2 \rangle$
\end{enumerate}

\end{enumerate}

The set of unstable families are given below.
\begin{enumerate}
\item [US1-VIII] $x_0x_1 \bigg( q_3(x_3,x_4)+x_2x_4^2 \bigg)$
\end{enumerate}

\begin{prop} Let $X$, up to a coordinate transformation, be of the form $ x_0x_1q_3(x_2,x_3,x_4)   $    .   

\begin{enumerate}
\item If $X$ belongs to one of the families  US1-VIII then $X$ is unstable.
\item If $X$ is of type SS1-VIII then the orbit is not closed and it degenerates to MO2-IV.
 
\item If $X$ is of type SS2-VIII then the orbit is not closed and it degenerates to MO2-IV.

\end{enumerate}
Otherwise, $X$ is a closed orbit.

\end{prop}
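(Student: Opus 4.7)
The plan is to invoke the lemma of Section~\ref{sec:lunascriterion}: the $SL(5,\CC)$-orbit of $f \in MO2\hyph VIII$ is closed if and only if $f$ is stable for the $Z_G(H)$-action on $V^H$, where $H = \langle 4,2,-2,-2,-2 \rangle$ and $Z_G(H) = \CC^{*2} \times SL(3,\CC)$. The invariant subspace $V^H$ is exactly the ten-dimensional span of monomials $x_0 x_1 \cdot m$ with $m$ a cubic in $x_2, x_3, x_4$, so stability can be analysed by the Hilbert-Mumford criterion inside $V^H$.

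First I would reduce $1$-PS testing to a Weyl chamber of $Z_G(H)$. Up to the Weyl group $S_2 \times S_3$, a normalized $\lambda$ has the form $\langle a_0, a_1, b_0, b_1, b_2 \rangle$ with $a_0 \geq a_1$, $b_0 \geq b_1 \geq b_2$, and $\sum = 0$. The common prefactor $x_0 x_1$ contributes $a_0 + a_1$ to every weight and therefore cancels from monomial comparisons, so the induced partial order on $V^H$ reduces to a partial order on the cubics $x_2^{i_2} x_3^{i_3} x_4^{i_4}$, characterized by the Mukai-style cumulative conditions $i_2 \geq j_2$ and $i_2 + i_3 \geq j_2 + j_3$. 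I would then run the poset and linear-programming routines from Section~\ref{sec:maxnon-stable} on this smaller sublattice to locate every topmost monomial admitting a normalized $\lambda$ with $\mu \leq 0$ (giving the maximal non-stable families) and with $\mu < 0$ (giving the maximal unstable ones). These should come out to $x_0 x_1 x_2 x_3^2$ with destabilizer $\langle 1,-1,2,-1,-1 \rangle$, $x_0 x_1 x_2^2 x_4$ with destabilizer $\langle 1,-1,1,1,-2 \rangle$, and the unstable family $x_0 x_1 (q_3(x_3,x_4) + x_2 x_4^2)$, matching SS1-VIII, SS2-VIII, and US1-VIII respectively.

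Next, for each semistable family the degeneration to a minimal orbit is the weight-zero part of a generic member under its destabilizing $\lambda$. A direct weight computation gives: for SS1-VIII the surviving monomials are those with $i_2 = 1$, producing $x_0 x_1 x_2 q_2(x_3,x_4)$; for SS2-VIII the surviving monomials have $i_4 = 1$, producing $x_0 x_1 x_4 q_2(x_2,x_3)$. Both are of the shape $MO2\hyph IV = x_0 x_4 (x_1 q_2(x_2,x_3))$ after a Weyl-group relabelling of coordinates (the transposition $x_2 \leftrightarrow x_4$ for SS1-VIII, the identity for SS2-VIII). For US1-VIII I would exhibit an explicit $\lambda \in Z_G(H)$, say $\langle 1,1,2,-1,-3 \rangle$, and verify that each of the five monomials in a generic US1-VIII element has strictly negative weight, confirming instability under $Z_G(H)$ and hence under $G$.

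The main obstacle I expect is completeness: one must be sure the poset and LP analysis exhausts every maximal $Z_G(H)$-semistable and $Z_G(H)$-unstable family, so that every non-closed orbit in $V^H$ specializes into SS1-VIII, SS2-VIII, or US1-VIII up to $Z_G(H)$-action. This reduces to a finite linear programming verification on the Weyl chamber of $\CC^{*2} \times SL(3,\CC)$, identical in spirit to the combinatorial procedure of Section~\ref{sec:maxnon-stable} but on a considerably smaller input space.
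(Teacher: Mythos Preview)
Your proposal is correct and follows essentially the same approach as the paper: Luna's criterion reduces closedness of the $SL(5,\CC)$-orbit to the $Z_G(H)$-action on $V^H$, and the Hilbert--Mumford/poset analysis of Section~\ref{sec:maxnon-stable}, restricted to the cubic factor $q_3(x_2,x_3,x_4)$, produces exactly the families SS1-VIII, SS2-VIII, US1-VIII and their degenerations. One small correction: the Weyl group of $Z_G(H)=\CC^{*2}\times SL(3,\CC)$ is $S_3$, not $S_2\times S_3$, since $x_0$ and $x_1$ lie in distinct $H$-eigenspaces; this is harmless here because every monomial in $V^H$ carries the common factor $x_0x_1$, so the $(a_0,a_1)$-weights enter only through their sum.
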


\subsubsection{MO2-IX}

In the case of family $MO2-IX$, the centralizer $\CC^{*} \times SL(2,\CC) \times SL(2, \CC)$ on polynomials of the form $ x_0q_{2,2}(x_1,x_2 \parallel x_3,x_4)  $.  By following a similar procedure as in the $MO \hyph A$ case one can obtain the maximal semistable and unstable families with respect to the $Z_G(H)$ action.  The poset analysis shows that the maximal semistable families are the following families below.

\begin{enumerate}
\item [SS1-IX]$x_0 ( x_1x_2 + x_2^2 \parallel x_3x_4 + x_4^2 )$

\begin{enumerate}
\item[-]  Destabilizing 1-PS: $\langle 0,1,-1,1,-1 \rangle$
\end{enumerate}

\end{enumerate}

The set of unstable families are given below.
\begin{enumerate}
\item [US1-IX] $x_0 \bigg(  x_2^2 \parallel x_3x_4 + x_4^2 \bigg)$
\item [US2-IX] $x_0 \bigg( x_1x_2 + x_2^2 \parallel  x_4^2 \bigg)$
\item [US3-IX] $x_0 \bigg(  x_2^2 \parallel  x_4^2 \bigg)$

\end{enumerate}

\begin{prop} Let $X$, up to a coordinate transformation, be of the form $ x_0q_{2,2}(x_1,x_2 \parallel x_3,x_4)$.   

\begin{enumerate}
\item If $X$ belongs to one of the families  US1-IX - US3-IX then $X$ is unstable.
\item If $X$ is of type SS1-IX then the orbit is not closed and it degenerates to MO2-V.

\end{enumerate}
Otherwise, $X$ is a closed orbit.

\end{prop}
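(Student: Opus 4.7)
The plan is to apply Luna's criterion exactly as in the other second--level minimal orbit propositions of section~\ref{sec:secondlevelminimalorbits}. Since the stabilizer $H=\langle 4,0,0,-2,-2\rangle$ has centralizer $Z_G(H)=\CC^{*}\times SL(2,\CC)\times SL(2,\CC)$, the closedness of the $SL(5,\CC)$-orbit of $f=x_0\,q_{2,2}(x_1,x_2\parallel x_3,x_4)$ reduces to the closedness of the $Z_G(H)$-orbit on the $H$-fixed subspace $V^{H}$. The $\CC^{*}$ factor (acting on $x_0$) only rescales $f$ by a nonzero constant, so it contributes nothing projectively and the analysis collapses to the $SL(2,\CC)\times SL(2,\CC)$-action on the nine--dimensional space of bidegree $(2,2)$ forms in $\{x_1,x_2\}$ and $\{x_3,x_4\}$.

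First I would run the Hilbert--Mumford/poset procedure of section~\ref{sec:maxnon-stable} for this $SL(2,\CC)\times SL(2,\CC)$ representation. A normalized 1-PS has the shape $(t^a,t^{-a})\oplus(t^b,t^{-b})$ with $a,b\geq 0$, and these weights induce a poset on the bidegree--$(2,2)$ monomials $x_1^{i}x_2^{2-i}x_3^{j}x_4^{2-j}$. Walking down the poset from the top to find the highest monomial with $\mu\leq 0$ (via linear programming, as in appendix~\ref{appendix:linearprogram}) yields a single maximal semistable family up to the symmetries $x_1\leftrightarrow x_2$, $x_3\leftrightarrow x_4$, namely SS1-IX with destabilizing 1-PS $\langle 0,1,-1,1,-1\rangle$. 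The associated maximal unstable families (those with $\mu<0$) are obtained by deleting the neutral monomials $x_1x_2$ or $x_3x_4$ (or both), recovering US1-IX, US2-IX, US3-IX.

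Second, to verify that SS1-IX degenerates to MO2-V, I would compute $\lim_{t\to 0}\lambda(t)\cdot f$ for $\lambda=\langle 0,1,-1,1,-1\rangle$. Under this 1-PS the four monomials composing SS1-IX have weights
\begin{align*}
x_0 x_1 x_2 x_3 x_4 &: \ 0, & x_0 x_1 x_2 x_4^{2} &: \ -2, \\
x_0 x_2^{2} x_3 x_4 &: \ -2, & x_0 x_2^{2} x_4^{2} &: \ -4,
\end{align*}
so only $x_0 x_1 x_2 x_3 x_4$ survives in the limit; this is precisely MO2-V. For the remaining cases (the complement of SS1-IX, US1-IX, US2-IX, US3-IX in $V^{H}$) the Hilbert--Mumford criterion applied inside $V^{H}$ gives $\mu>0$ for every normalized 1-PS in $SL(2,\CC)\times SL(2,\CC)$, so the $Z_G(H)$-orbit is stable, hence closed, and by Luna's criterion the original $SL(5,\CC)$-orbit is closed.

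The main obstacle is making the poset enumeration exhaustive: since the representation is small (bidegree $(2,2)$ has only nine monomials) this is entirely tractable, but some care is needed to verify that the two independent $SL(2,\CC)$-symmetries exhaust the coordinate freedom, so that SS1-IX really is the \emph{unique} maximal semistable stratum and US1-IX--US3-IX really are the \emph{only} maximal unstable strata. After that, the degeneration computation is immediate from the weight table above.
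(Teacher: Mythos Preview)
Your proposal is correct and follows exactly the approach the paper uses throughout section~\ref{sec:secondlevelminimalorbits}: apply Luna's criterion with $Z_G(H)=\CC^{*}\times SL(2,\CC)\times SL(2,\CC)$, reduce to the Hilbert--Mumford/poset analysis for bidegree $(2,2)$ forms, read off SS1-IX and the US families, and degenerate via the destabilizing 1-PS. Your explicit weight table verifying that $\lim_{t\to 0}\lambda(t)\cdot f=x_0x_1x_2x_3x_4$ is a detail the paper leaves implicit, but otherwise the argument is the same.
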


\subsubsection{MO2-X}

In the case of family $MO2-X$, the centralizer $\CC^{*2} \times SL(2,\CC) \times  \CC^* $ on polynomials of the form $  x_0 ( q_4(x_2,x_3) + x_1q_2(x_2,x_3)x_4 + x_1^2x_4^2 ) $ .  By following a similar procedure as in the $MO \hyph A$ case one can obtain the maximal semistable and unstable families with respect to the $Z_G(H)$ action.  The poset analysis shows that the maximal semistable families are the following families below.

\begin{enumerate}

\item [SS1-X]$  x_0 ( q_4(x_2,x_3) + x_1 (  x_2x_3 + x_3^2  ) x_4 + x_1^2x_4^2 )   $       
\begin{enumerate}
\item[-]  Destabilizing 1-PS: $\langle a,b,1,-1,c \rangle$
\end{enumerate}

\item [SS2-X]$  x_0 ( q_4(x_2,x_3) + x_1 (x_2x_3 + x_2^2  ) x_4 + x_1^2x_4^2 ) $       
\begin{enumerate}
\item[-]  Destabilizing 1-PS: $\langle a,b,1,-1,c \rangle$
\end{enumerate}

\end{enumerate}

The set of unstable families are given below.
\begin{enumerate}
\item [US1-X] $ x_0 \bigg( x_2^2x_3^2+x_2x_3^3+x_3^4 + x_1x_3^2x_4 + x_1^2x_4^2 \bigg) $
\item [US2-X]  $ x_0 \bigg( x_2x_3^3+x_3^4 + x_1(x_2x_3+x_3^2)x_4 + x_1^2x_4^2 \bigg) $

\end{enumerate}

\begin{prop} Let $X$, up to a coordinate transformation, be of the form $  x_0 ( q_4(x_2,x_3) + x_1q_2(x_2,x_3)x_4 + x_1^2x_4^2 )             $  .   

\begin{enumerate}
\item If $X$ belongs to one of the families  US1-X - US2-X then $X$ is unstable. 

\item If $X$ is of type SS1-X then the orbit is not closed and it degenerates to MO2-V. 

\item If $X$ is of type SS2-X then the orbit is not closed and it degenerates to MO2-V.

\end{enumerate}
Otherwise, $X$ is a closed orbit.

\end{prop}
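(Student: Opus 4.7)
The plan is to apply Luna's criterion followed by the Hilbert--Mumford criterion to the $Z_G(H)$-action on $V^H$, exactly parallel to the earlier second-level propositions. Here $H$ is the invariant 1-PS $\langle 4,0,-1,-1,-2\rangle$ of $MO2\hyph X$ and $Z_G(H)\cong\CC^{*2}\times SL(2,\CC)\times\CC^*$. The invariant subspace $V^H$ is the nine-dimensional affine span of the monomials appearing in $MO2\hyph X$, and a generic element has the shape
\begin{equation*}
f = x_0\bigl(q_4(x_2,x_3)+x_1 q_2(x_2,x_3)x_4+\gamma\,x_1^2 x_4^2\bigr),
\end{equation*}
where $q_4$ is a binary quartic in $(x_2,x_3)$, $q_2$ is a binary quadratic, and $\gamma\in\CC$.

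First, I would parametrize a 1-PS of the maximal torus of $Z_G(H)$ by $\lambda=\langle a,b,m,n,c\rangle$ with $a+b+m+n+c=0$ and $m\geq n$, and compute the weights of each of the nine monomials of $V^H$. A poset/linear-programming analysis analogous to that of section~\ref{sec:maxnon-stable} then enumerates the maximal monomial subsets on which every weight is $\leq 0$ (producing $SS1\hyph X$ and $SS2\hyph X$, which correspond to the two $SL(2,\CC)$-translates specializing $q_2$ to a reducible quadratic divisible by $x_3$ or by $x_2$) and the maximal subsets on which every weight is strictly negative (producing $US1\hyph X$ and $US2\hyph X$).

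Second, for each case I would establish the corresponding conclusion. For $US1\hyph X$ and $US2\hyph X$, the destabilizing 1-PS obtained from the enumeration yields $\mu<0$ on every monomial, so by Hilbert--Mumford these forms are $G$-unstable. For $SS1\hyph X$ and $SS2\hyph X$, I would compute the limit $\lim_{t\to 0}\lambda(t)\cdot f$ under the destabilizing 1-PS $\langle a,b,1,-1,c\rangle$; only the weight-zero monomials survive, and the surviving part has the shape $x_0 x_1 x_3 x_4\cdot \ell(x_2,x_3)$ for some linear form $\ell$, which under an elementary $SL(2,\CC)$-change of variables on $(x_2,x_3)$ is identified with $x_0 x_1 x_2 x_3 x_4$, i.e.~with $MO2\hyph V$. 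Since $MO2\hyph V$ was shown in the preceding subsection to be a closed orbit, $\overline{G\cdot f}$ meets $MO2\hyph V$ and so $f$ has non-closed orbit degenerating to $MO2\hyph V$. Finally, any $f\in MO2\hyph X$ outside these four families admits no $Z_G(H)$-1-PS on $V^H$ with $\mu\leq 0$, so by Luna's criterion $G\cdot f$ is closed.

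The main obstacle is the combinatorial enumeration itself. The kernel of the $Z_G(H)$-action on $V^H$ has positive dimension (the 1-PS $H$ acts trivially by construction, and a direct check shows a second independent 1-PS proportional to $\langle 0,1,0,0,-1\rangle$ also acts trivially on every monomial of $V^H$); consequently the effective acting torus has only rank two and the poset enumeration must be carried out on this quotient rather than on the full four-dimensional maximal torus of $Z_G(H)$. Once the maximal semistable and unstable families are correctly identified, verifying each numbered claim of the proposition reduces to a direct weight computation together with the coordinate change identifying the limit with $MO2\hyph V$.
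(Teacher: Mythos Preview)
Your proposal is correct and follows exactly the method the paper uses throughout section~\ref{sec:secondlevelminimalorbits} (Luna's criterion plus a Hilbert--Mumford poset analysis for the $Z_G(H)$-action on $V^H$), which the paper itself does not spell out for this proposition beyond referring back to the $MO\hyph A$ template. One small correction: under $\langle a,b,1,-1,c\rangle$ with $a+b+c=0$ the monomial $x_0x_1x_3^2x_4$ has weight $-2$, so the weight-zero limit is already a scalar multiple of $x_0x_1x_2x_3x_4$ and no further $SL(2,\CC)$-change is needed; your observation that the torus kernel on $V^H$ is two-dimensional is a helpful refinement not made explicit in the paper.
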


\subsection{Boundary Stratification}
From the results above, the most degenerate point in the GIT compactification is the normal crossing singularities hypersurface $x_0x_1x_2x_3x_4$. The following chart show how the various degenerations occur on the boundary of the GIT compactification.  The non-closed orbits of families $MO\hyph A - MO \hyph D$ degenerate further into the families $MO2 \hyph I - MO2 \hyph X$, which all eventually degenerate to the family of $x_0x_1x_2x_3x_4$.

\pagestyle{empty}

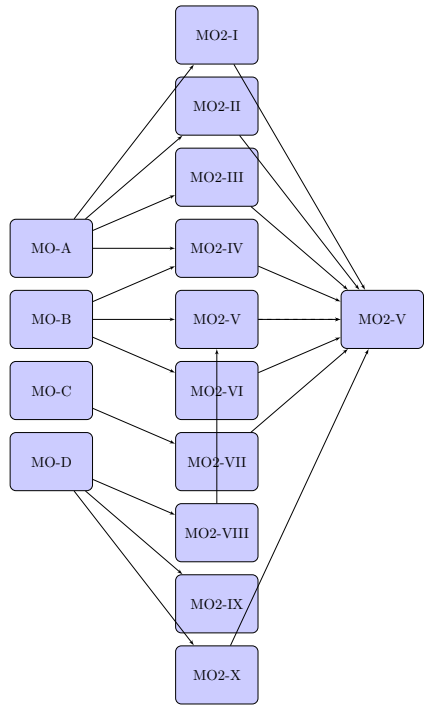
\begin{figure}[htdp]
\scalebox{0.55}{
\tikzstyle{block} = [rectangle, draw, fill=blue!20, 
    text width=5em, text centered, rounded corners, minimum height=4em]
\tikzstyle{line} = [draw, -latex']
    
\begin{tikzpicture}[node distance = 1.72cm, auto]
    \node [block] (MO-A) {MO-A};
    \node [block] (MO-B) [below of=MO-A]  {MO-B};

    \node [block] (MO-C) [below of=MO-B]  {MO-C};
    \node [block] (MO-D) [below of=MO-C]  {MO-D};

    \node [block] (MO2-IV) [right of=MO-A, node distance = 4cm]  {MO2-IV};
    \node [block] (MO2-V) [below of=MO2-IV]  {MO2-V};
    \node [block] (MO2-VI) [below of=MO2-V]  {MO2-VI};
    \node [block] (MO2-VII) [below of=MO2-VI]  {MO2-VII};

    \node [block] (MO2-VIII) [below of=MO2-VII]  {MO2-VIII};
    \node [block] (MO2-IX) [below of=MO2-VIII]  {MO2-IX};
    \node [block] (MO2-X) [below of=MO2-IX]  {MO2-X};
    \node [block] (MO2-III) [above of=MO2-IV]  {MO2-III};
    \node [block] (MO2-II) [above of=MO2-III]  {MO2-II};
    \node [block] (MO2-I) [above of=MO2-II]  {MO2-I};
    
    \node [block] (MO2-V2) [right of=MO2-V, node distance=4cm]  {MO2-V};

		\path[line] (MO-A) -- (MO2-I);
		\path[line] (MO-A) -- (MO2-IV);
		\path[line] (MO-A) -- (MO2-II);
		\path[line] (MO-A) -- (MO2-III);
		\path[line] (MO-B) -- (MO2-IV);
		\path[line] (MO-B) -- (MO2-V);
		\path[line] (MO-B) -- (MO2-VI);
		\path[line] (MO-C) -- (MO2-VII);
		\path[line] (MO-D) -- (MO2-VIII);
		\path[line] (MO-D) -- (MO2-IX);
		\path[line] (MO-D) -- (MO2-X);
		
		\path[line] (MO2-I) -- (MO2-V2);
    \path[line] (MO2-II) -- (MO2-V2);
    \path[line] (MO2-III) -- (MO2-V2);
    \path[line] (MO2-IV) -- (MO2-V2);
    \path[line] (MO2-V) -- (MO2-V2);
    \path[line] (MO2-VI) -- (MO2-V2);
    \path[line] (MO2-VII) -- (MO2-V2);
    \path[line] (MO2-VIII) -- (MO2-V);
    \path[line] (MO2-X) -- (MO2-V2);
    
    \path [line, dashed] (MO2-V) -- (MO2-V2);

\end{tikzpicture}
}
\caption{Boundary Stratification of GIT Compactification}
\end{figure}
\newpage

\newpage
\appendix

\section{Code for Poset Structure}
\label{appendix:posetcode}
\hspace{ 4 in}

\pagestyle{empty}
\DefineParaStyle{Maple Heading 1}
\DefineParaStyle{Maple Text Output}
\DefineParaStyle{Maple Dash Item}
\DefineParaStyle{Maple Bullet Item}
\DefineParaStyle{Maple Normal}
\DefineParaStyle{Maple Heading 4}
\DefineParaStyle{Maple Heading 3}
\DefineParaStyle{Maple Heading 2}
\DefineParaStyle{Maple Warning}
\DefineParaStyle{Maple Title}
\DefineParaStyle{Maple Error}
\DefineCharStyle{Maple Hyperlink}
\DefineCharStyle{Maple 2D Math}
\DefineCharStyle{Maple Maple Input}
\DefineCharStyle{Maple 2D Output}
\DefineCharStyle{Maple 2D Input}
\begin{maplegroup}
\begin{mapleinput}
\mapleinline{active}{1d}{withposets();

poset := proc (n) local P, z1, z2, z3, z4, z5, z6, x0, x1, x2, x3, x4, x5, y1, y2, y3, y4, y5, y6; 

P := NULL; 
for z1 from 0 to n do 
for z2 from 0 to n do 
for z3 from 0 to n do 
for z4 from 0 to n do 
for z5 from 0 to n do 

for y1 from 0 to n do 
for y2 from 0 to n do 
for y3 from 0 to n do 
for y4 from 0 to n do 
for y5 from 0 to n do 

}{}
\end{mapleinput}
\end{maplegroup}
\begin{maplegroup}
\begin{mapleinput}
\mapleinline{active}{1d}{if z1+z2+z3+z4+z5 = n and y1+y2+y3+y4+y5 = n and z1 <= y1 and z1+z2 <= y1+y2 and z1+z2+z3 <= y1+y2+y3 and z1+z2+z3+z4 <= y1+y2+y3+y4 and x0\symbol{94}z1*x1\symbol{94}z2*x2\symbol{94}z3*x3\symbol{94}z4*x4\symbol{94}z5 <> x0\symbol{94}y1*x1\symbol{94}y2*x2\symbol{94}y3*x3\symbol{94}y4*x4\symbol{94}y5 then
 
P := P, [[z1, z2, z3, z4, z5], [y1, y2, y3, y4, y5]] 
else P := P 
end if 
end do 
end do 
end do 
end do 
end do 
end do 
end do 
end do 
end do 
end do; 
P := covers(\{P\}) 
end;}{}
\end{mapleinput}
\end{maplegroup}

\section{Sample Linear Programming Calculation}
\label{appendix:linearprogram}
\hspace{ 4 in}

\pagestyle{empty}
\DefineParaStyle{Maple Heading 1}
\DefineParaStyle{Maple Text Output}
\DefineParaStyle{Maple Dash Item}
\DefineParaStyle{Maple Bullet Item}
\DefineParaStyle{Maple Normal}
\DefineParaStyle{Maple Heading 4}
\DefineParaStyle{Maple Heading 3}
\DefineParaStyle{Maple Heading 2}
\DefineParaStyle{Maple Warning}
\DefineParaStyle{Maple Title}
\DefineParaStyle{Maple Error}
\DefineCharStyle{Maple Hyperlink}
\DefineCharStyle{Maple 2D Math}
\DefineCharStyle{Maple Maple Input}
\DefineCharStyle{Maple 2D Output}
\DefineCharStyle{Maple 2D Input}
\begin{maplegroup}
\begin{mapleinput}
\mapleinline{active}{1d}{with(Optimization);
}{}
\end{mapleinput}
\end{maplegroup}
\begin{maplegroup}
\begin{mapleinput}
\mapleinline{active}{1d}{with(LinearAlgebra);}{}
\end{mapleinput}
\end{maplegroup}
\begin{maplegroup}
\begin{mapleinput}
\mapleinline{active}{1d}{with(VectorCalculus);
}{}
\end{mapleinput}
\end{maplegroup}
\begin{maplegroup}
\begin{mapleinput}
\mapleinline{active}{1d}{with(ListTools);
}{}
\end{mapleinput}
\end{maplegroup}
\begin{maplegroup}
\begin{mapleinput}
\mapleinline{active}{1d}{v1:=[a,b,c,d,e];}{}
\end{mapleinput}
\end{maplegroup}
\begin{maplegroup}
\begin{mapleinput}
\mapleinline{active}{1d}{v2:=[4,1,0,0,0];
}{}
\end{mapleinput}
\end{maplegroup}
\begin{maplegroup}
\begin{mapleinput}
\mapleinline{active}{1d}{mon:=DotProduct(v1,v2);}{}
\end{mapleinput}
\end{maplegroup}
\begin{maplegroup}
\begin{mapleinput}
\mapleinline{active}{1d}{constraints:=\{mon<=0,a+b+c+d+e=0,a>=1,a>=b,b>=c,c>=d,d>=e\};}{}
\end{mapleinput}
\end{maplegroup}
\begin{maplegroup}
\begin{mapleinput}
\mapleinline{active}{1d}{LPSolve(1,constraints,assume=integer);
}{}
\end{mapleinput}
\end{maplegroup}

\newpage

\begin{figure}[htdp]
	\centering
		\includegraphics[scale=0.85]{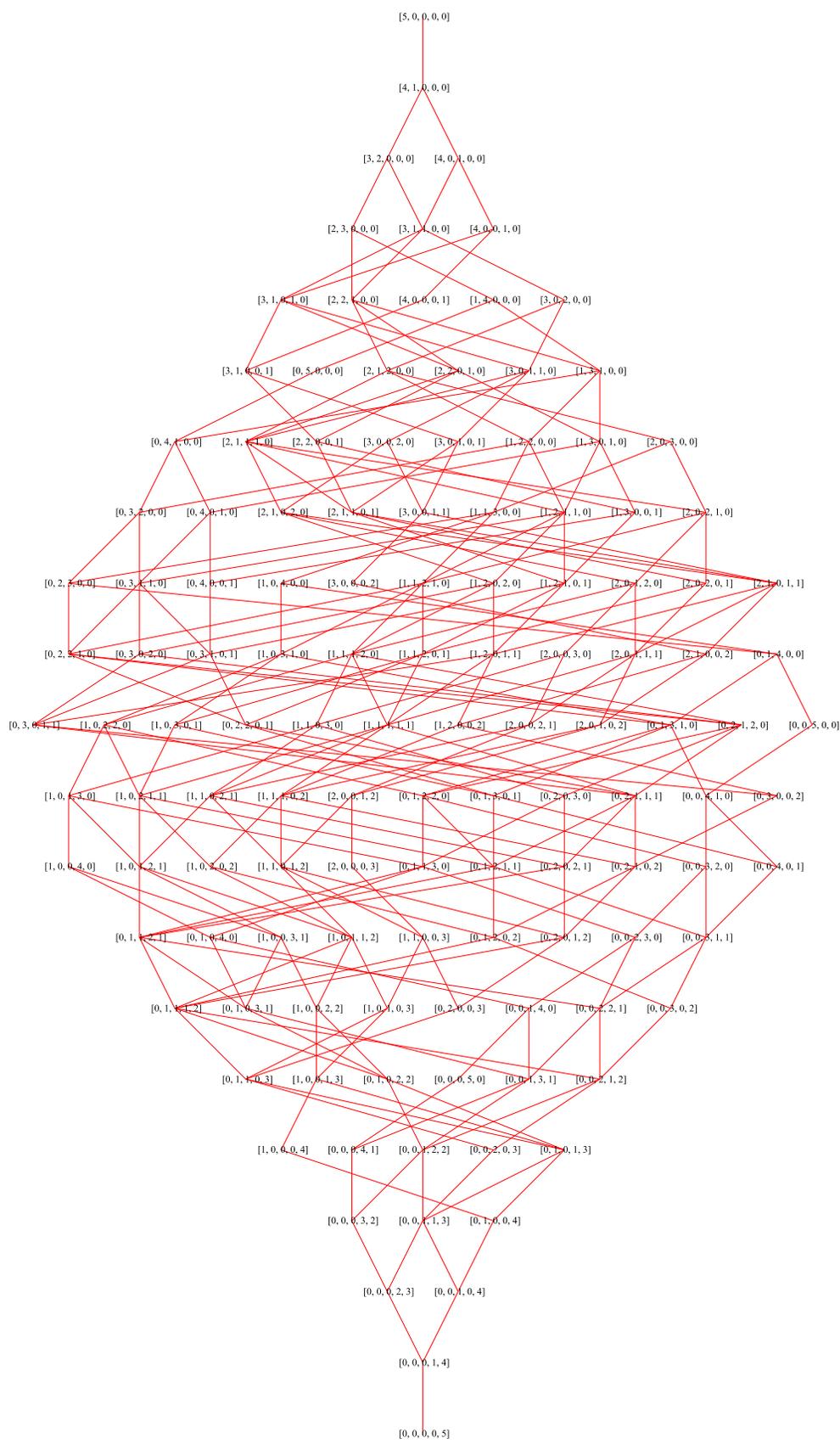}
	\caption{Poset structure of quintic monomials}
	\label{fig:calabiyau}
\end{figure}

\newpage

\begin{figure}[htdp]
 \centering
		\includegraphics[scale=.80]{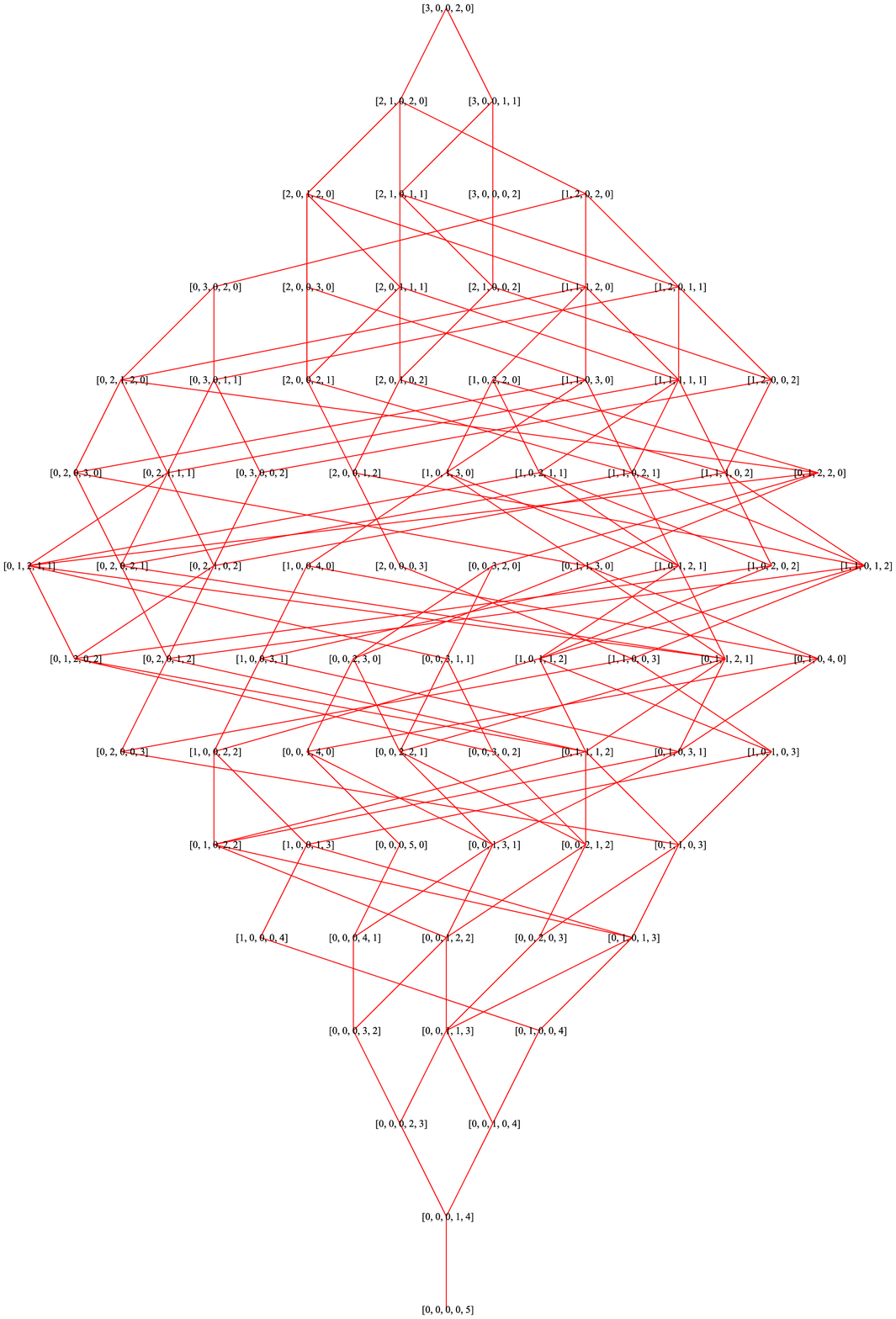}
	\caption{Poset structure of family SS1 }
	\label{fig:SS1}
\end{figure}

\newpage
\begin{figure}[htdp]
	\centering
		\includegraphics[scale=.80]{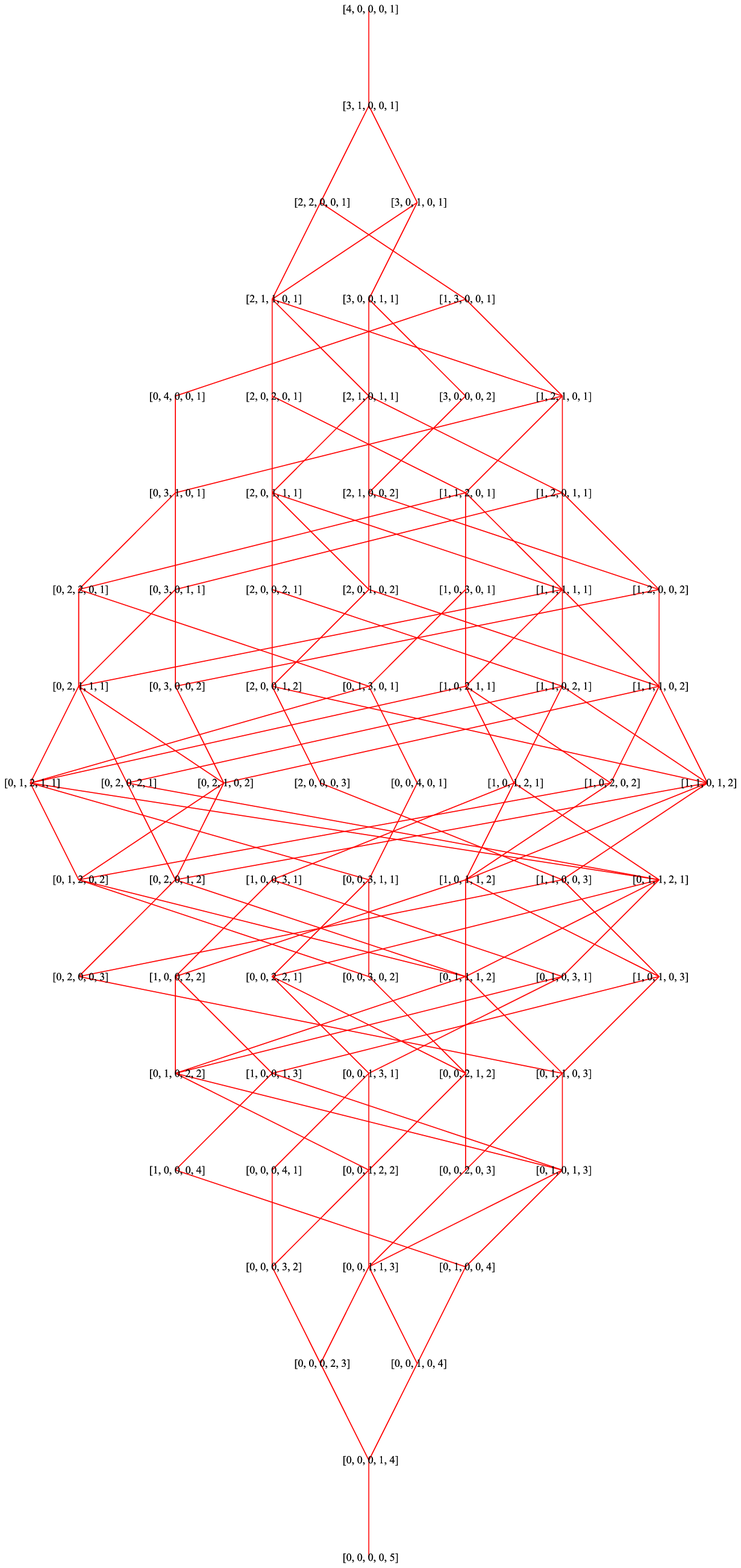}
	\caption{Poset structure of family SS2 }
	\label{fig:SS2}
\end{figure}

\newpage

\begin{figure}[htdp]
	\centering
		\includegraphics[scale=.90]{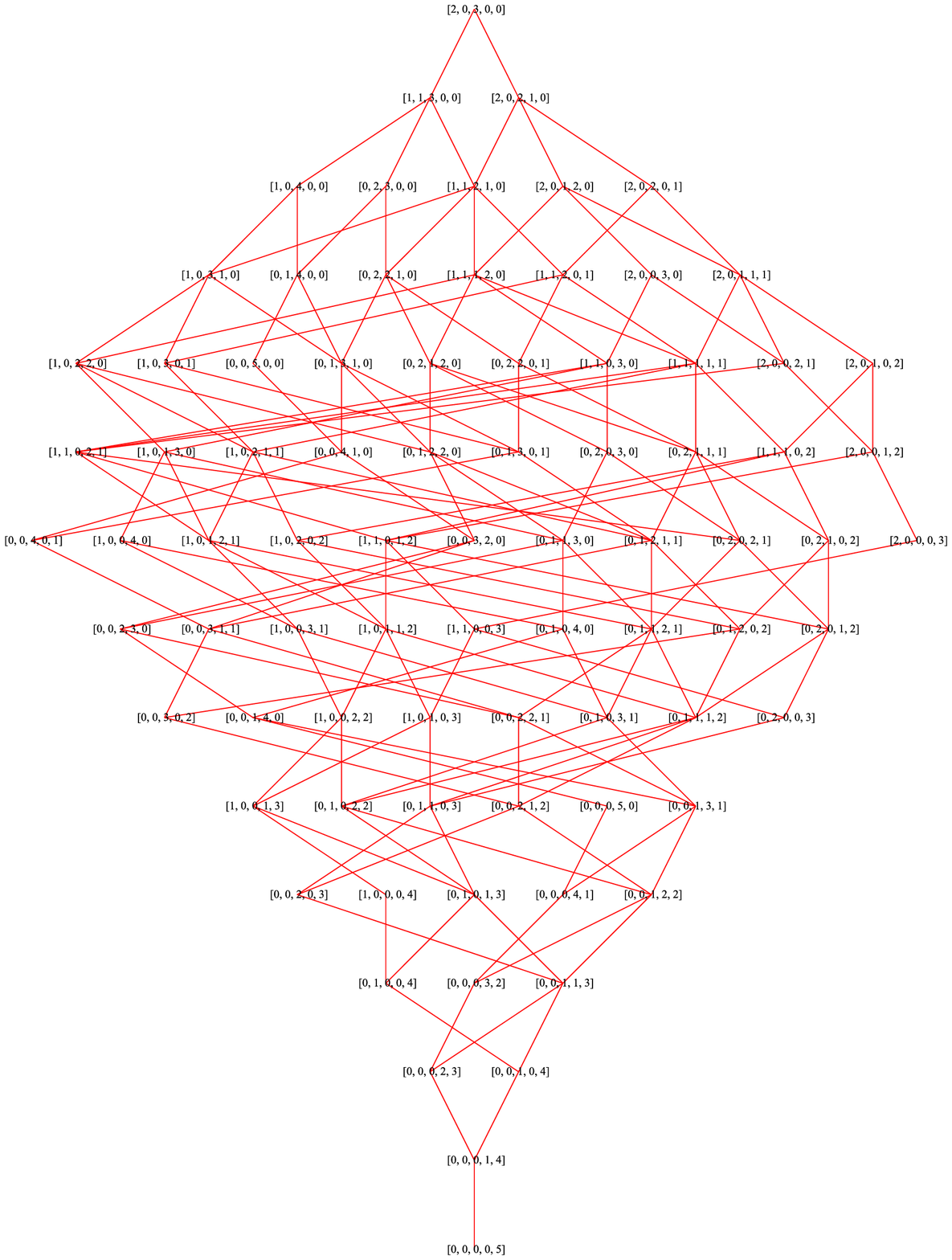}
	\caption{Poset structure of family SS3 }
	\label{fig:SS3}
\end{figure}

\newpage
\begin{figure}[htdp] 
	\centering
		\includegraphics[scale=.90]{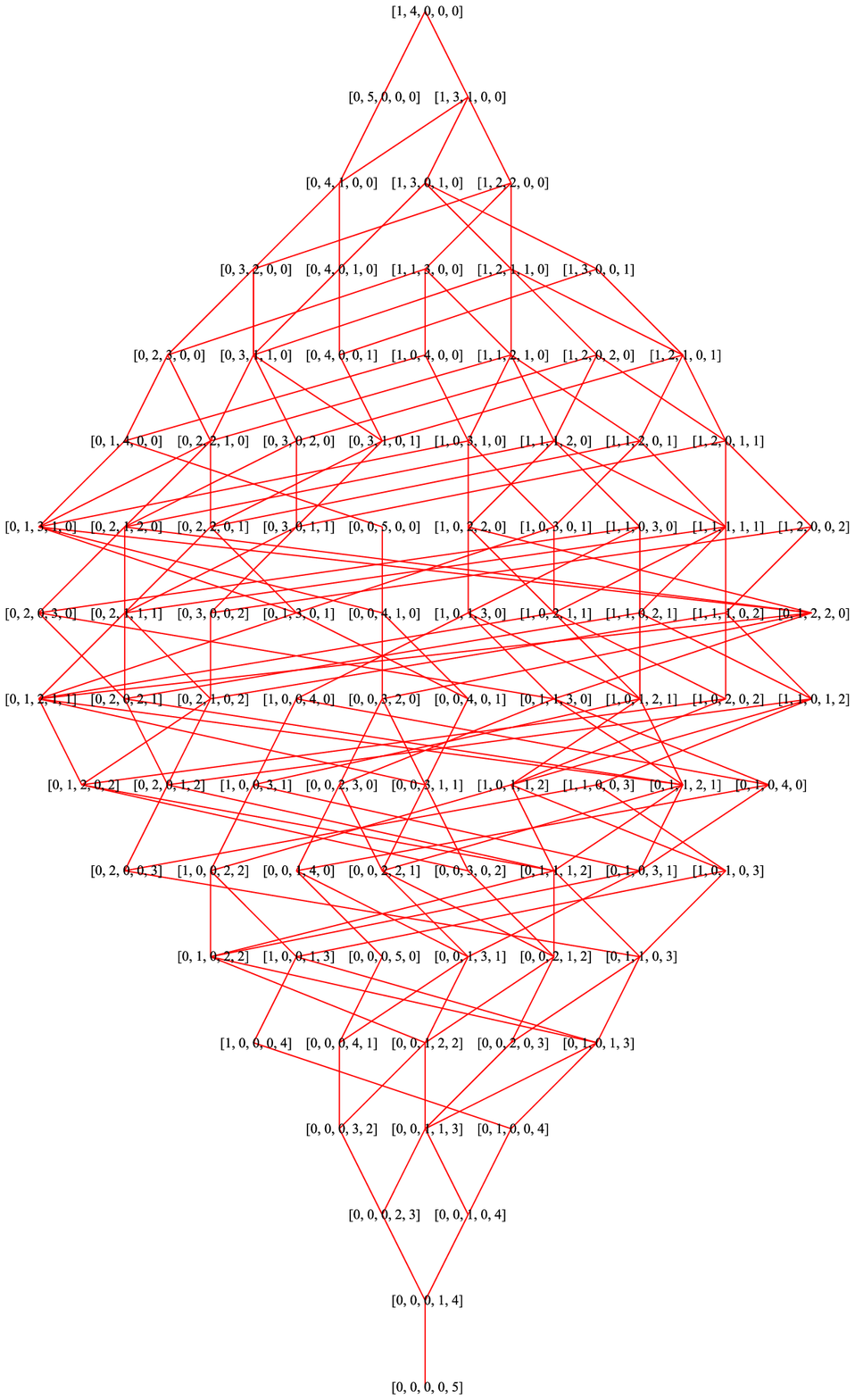}
	\caption{Poset structure of family SS4 }
	\label{fig:SS4}
\end{figure}

\newpage

\begin{figure}[htdp]   
\centering
    \subfloat { \includegraphics[trim = 50mm 0mm 50mm 0mm, clip, scale=0.85]{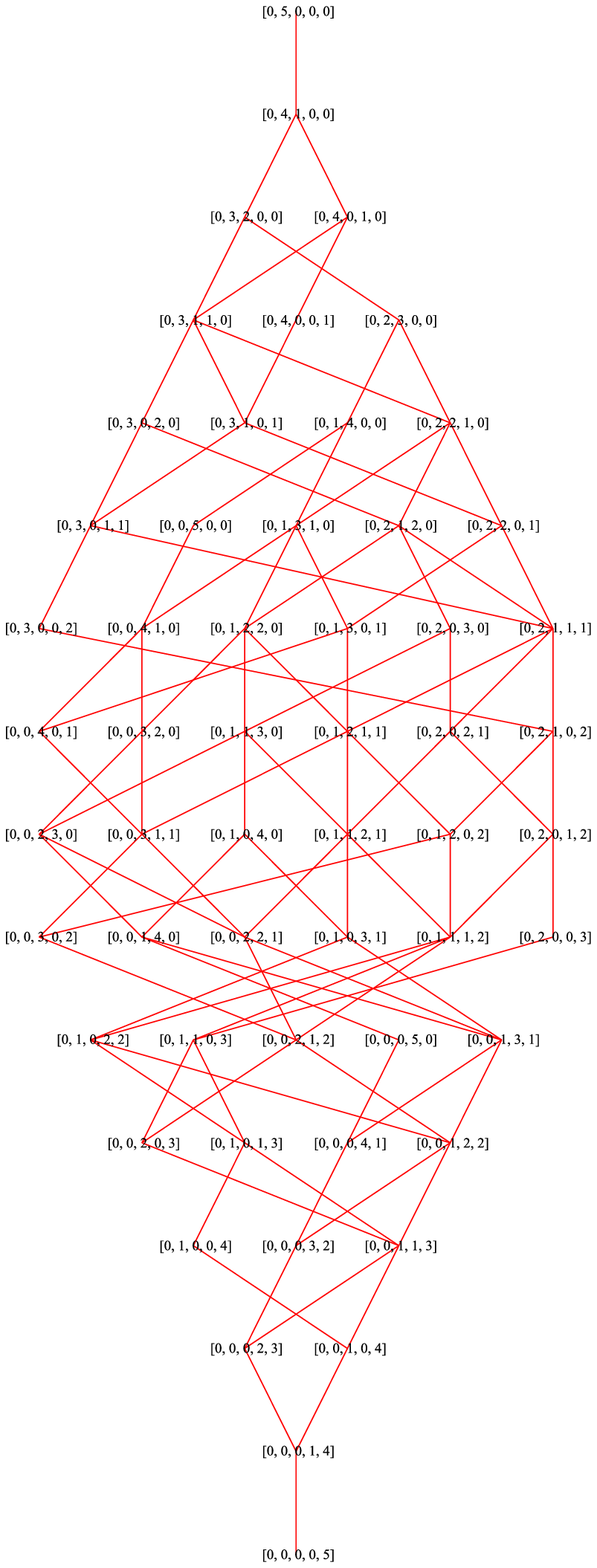}}
    \subfloat { \includegraphics[trim = 50mm 0mm 50mm 0mm, clip, scale=0.85]{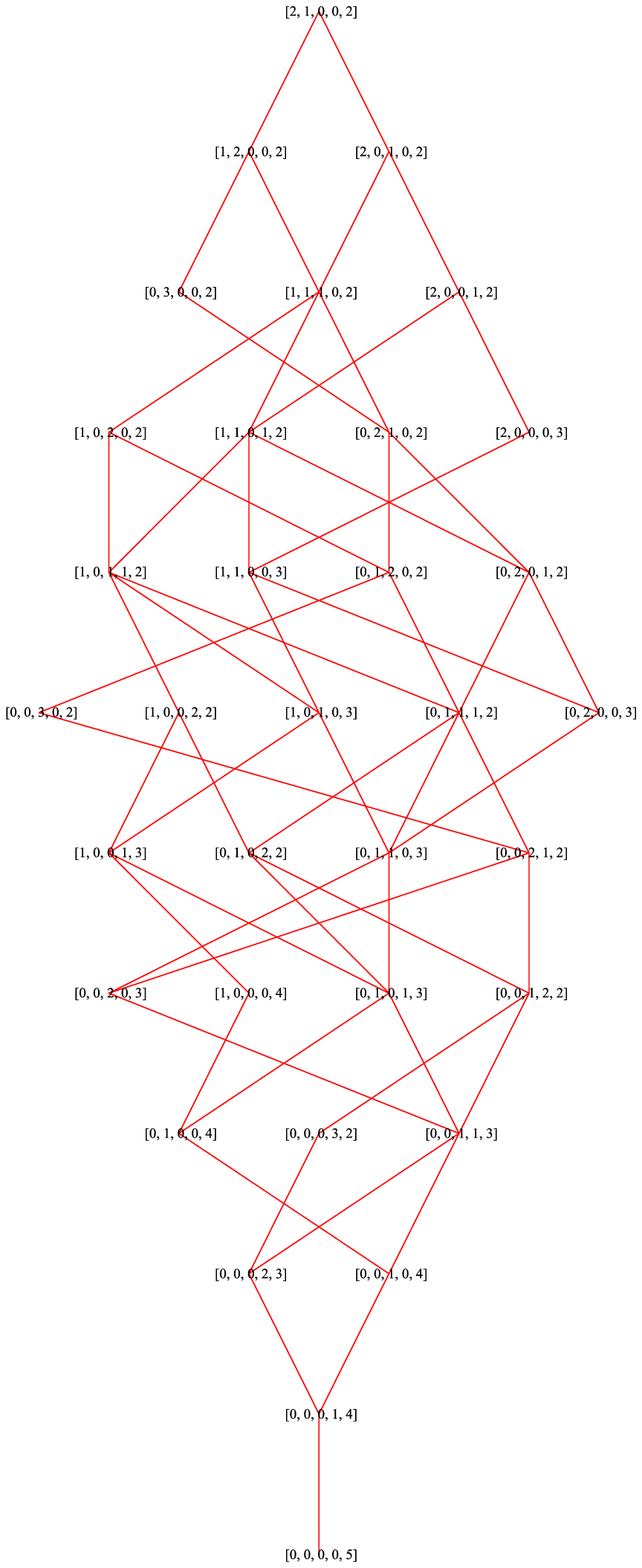}}
  \caption{Poset structure of family SS5}
\label{fig:SS5a}
\end{figure}

\newpage

\begin{figure}[htdp] 
	\centering
		\includegraphics[scale=.95]{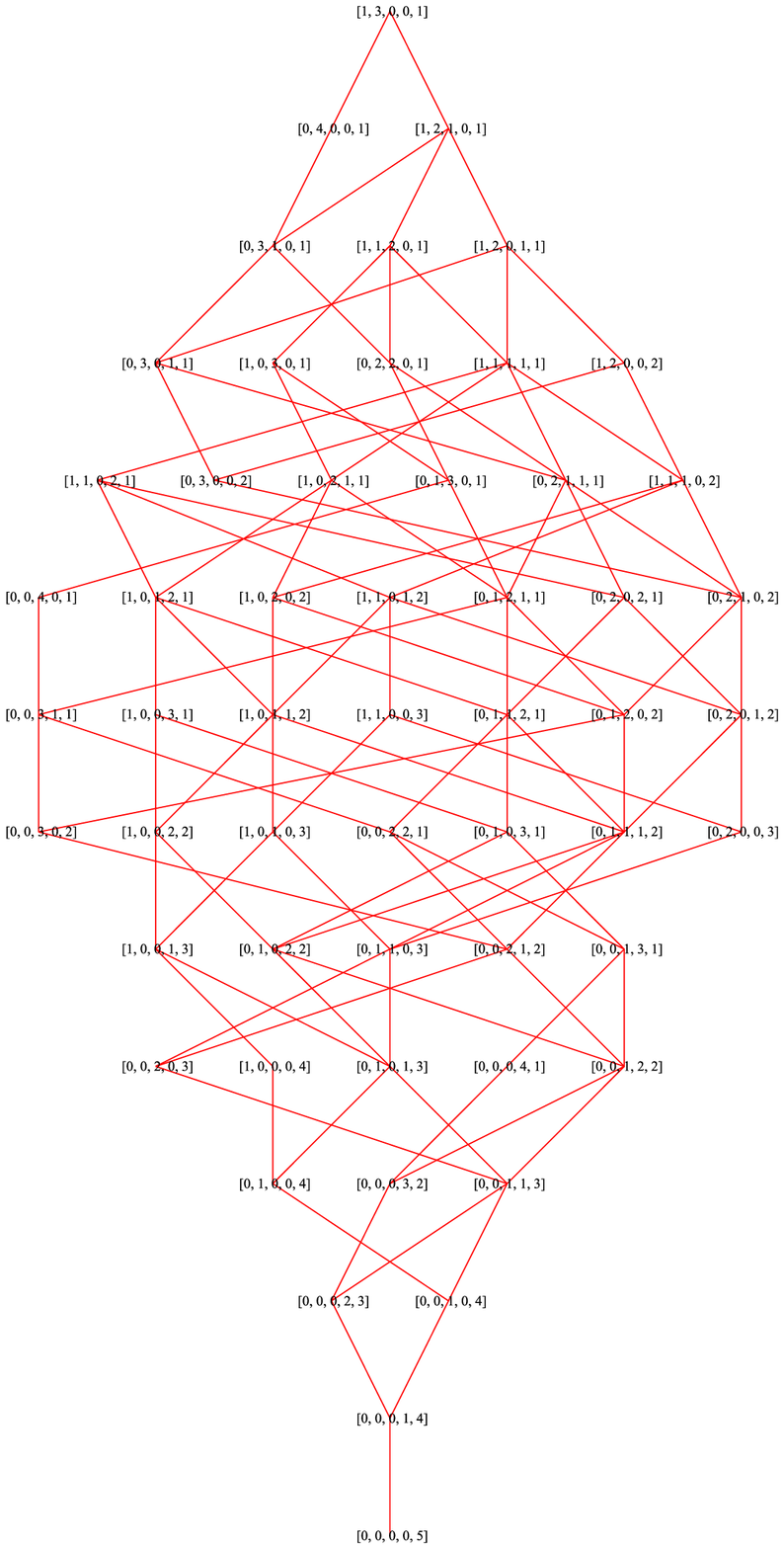}
	\caption{Poset structure of family SS5 }
	\label{fig:SS5b}
\end{figure}

\newpage

\begin{figure}[htdp]
\centering
    \subfloat{\includegraphics[trim = 40mm 0mm 32mm 0mm, clip, scale=0.75]{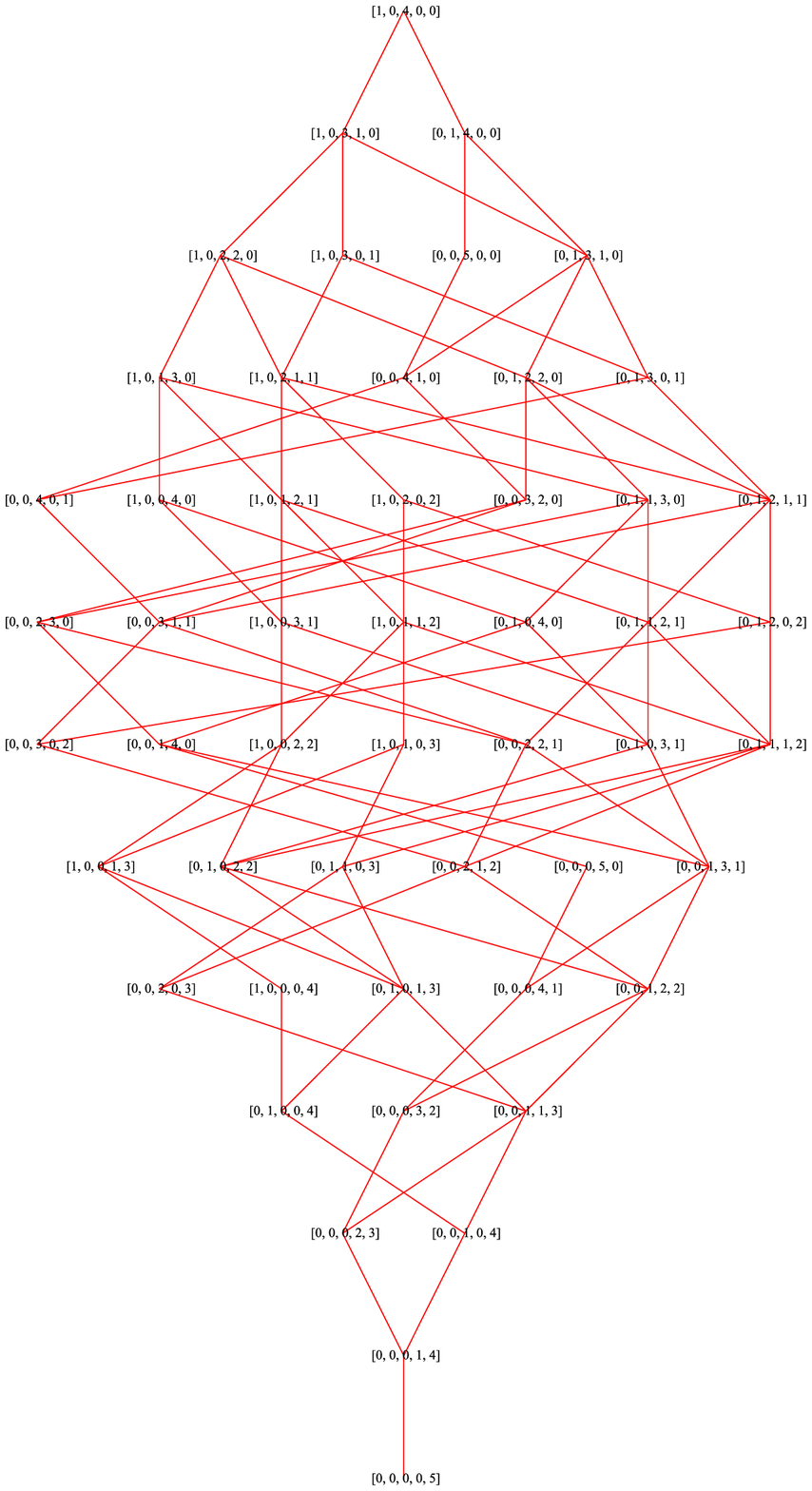}}
    \subfloat{\includegraphics[trim = 60mm 0mm 60mm 0mm, clip, scale=0.75]{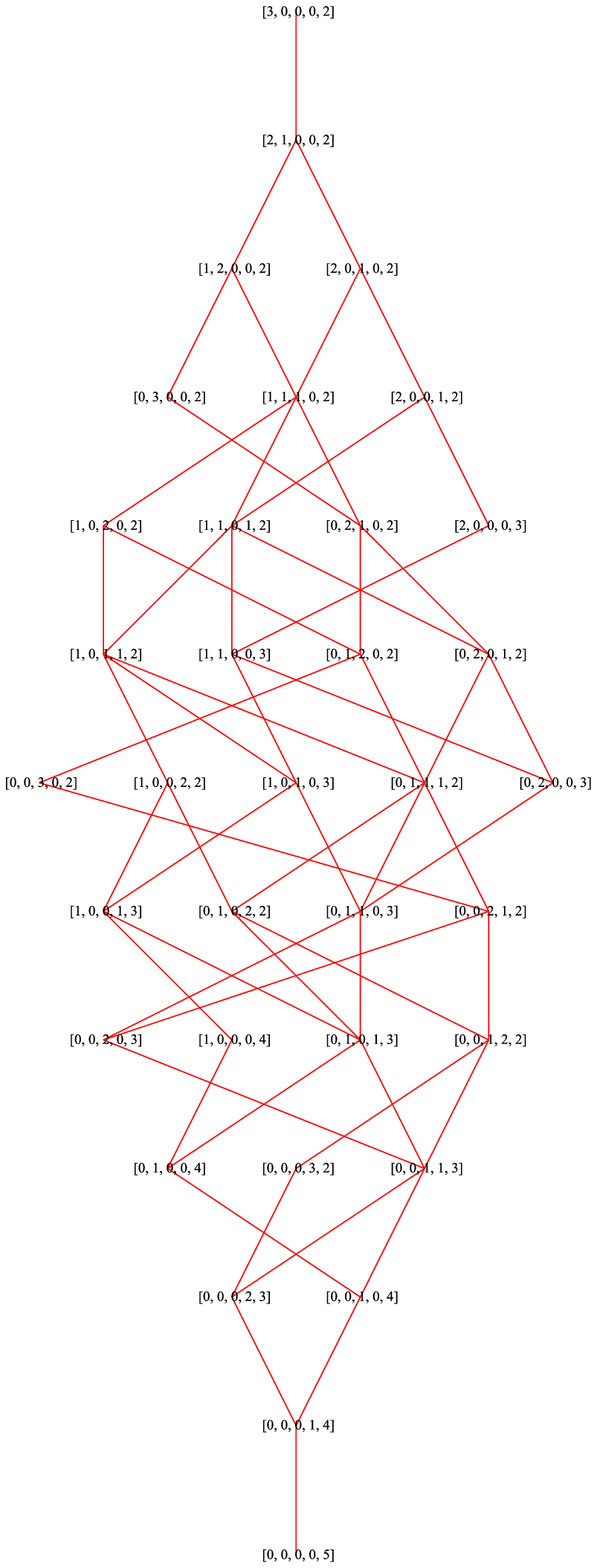}}
  \caption{Poset structure of family SS6}
  \label{fig:SS6a}
\end{figure}

\newpage

\begin{figure}[htdp] 
	\centering
		\includegraphics[scale=.85]{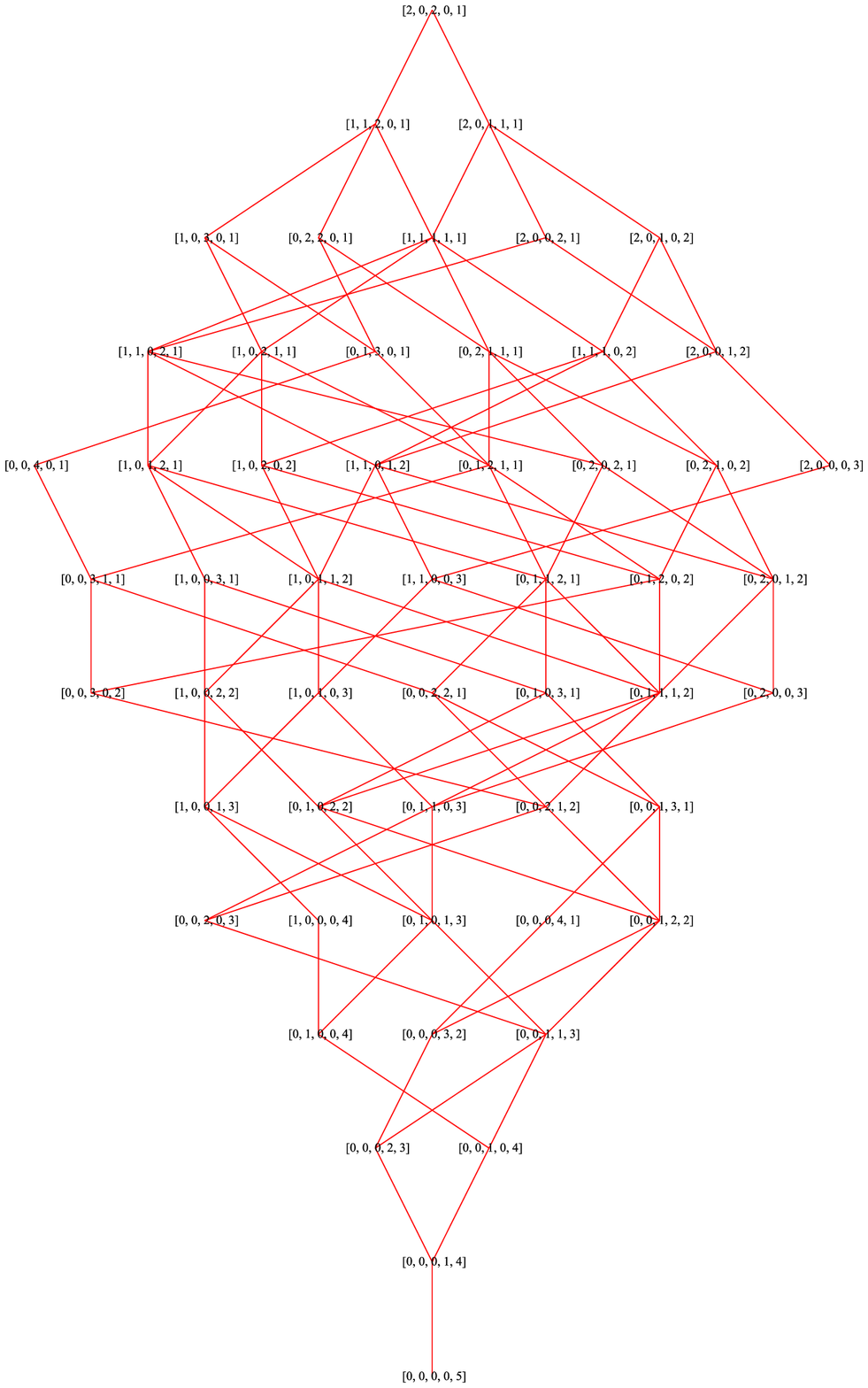}
	\caption{Poset structure of family SS6 }
	\label{fig:SS6b}
\end{figure}

\newpage
\begin{figure}[htdp]
\centering
    \subfloat{\includegraphics[trim = 52mm 0mm 50mm 0mm, clip, scale=0.75]{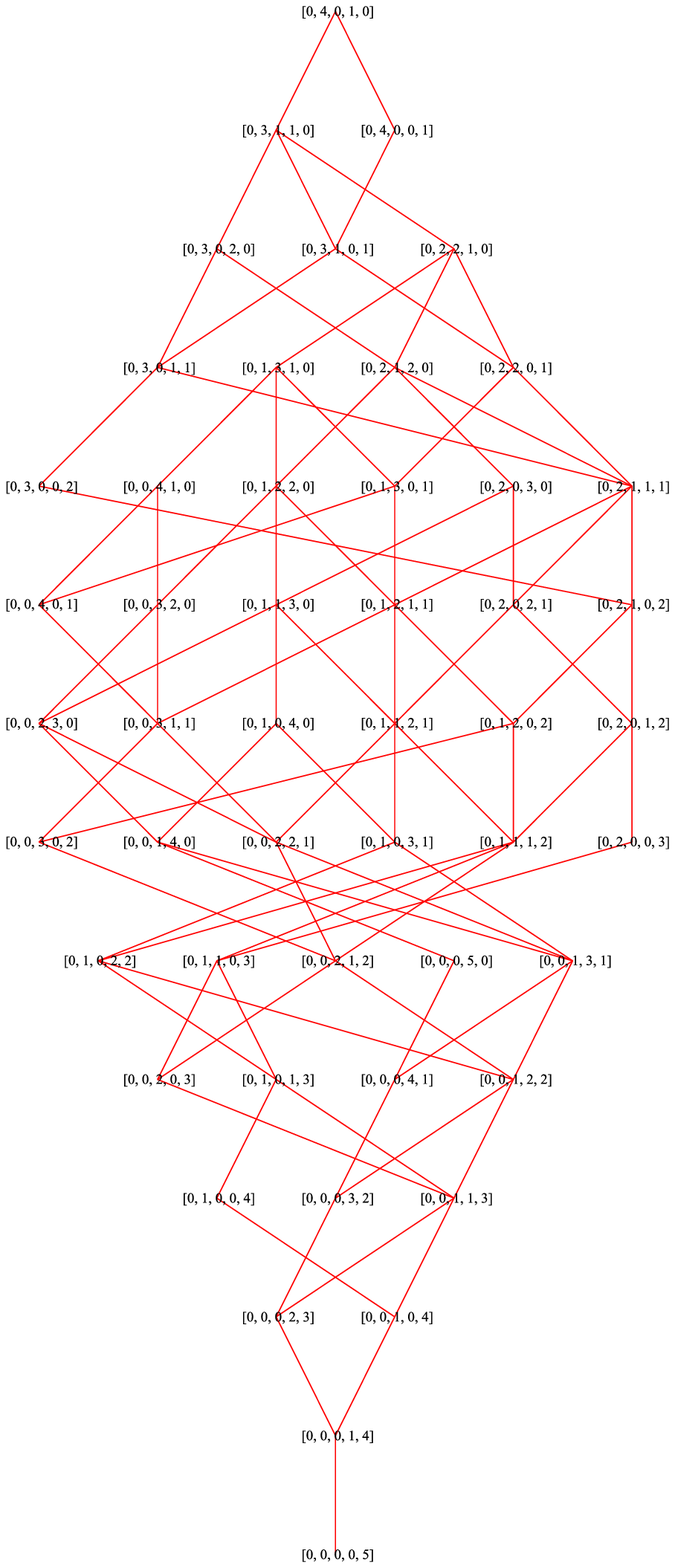}}
    \subfloat{\includegraphics[trim = 36mm 0mm 0mm 0mm, clip, scale=0.75]{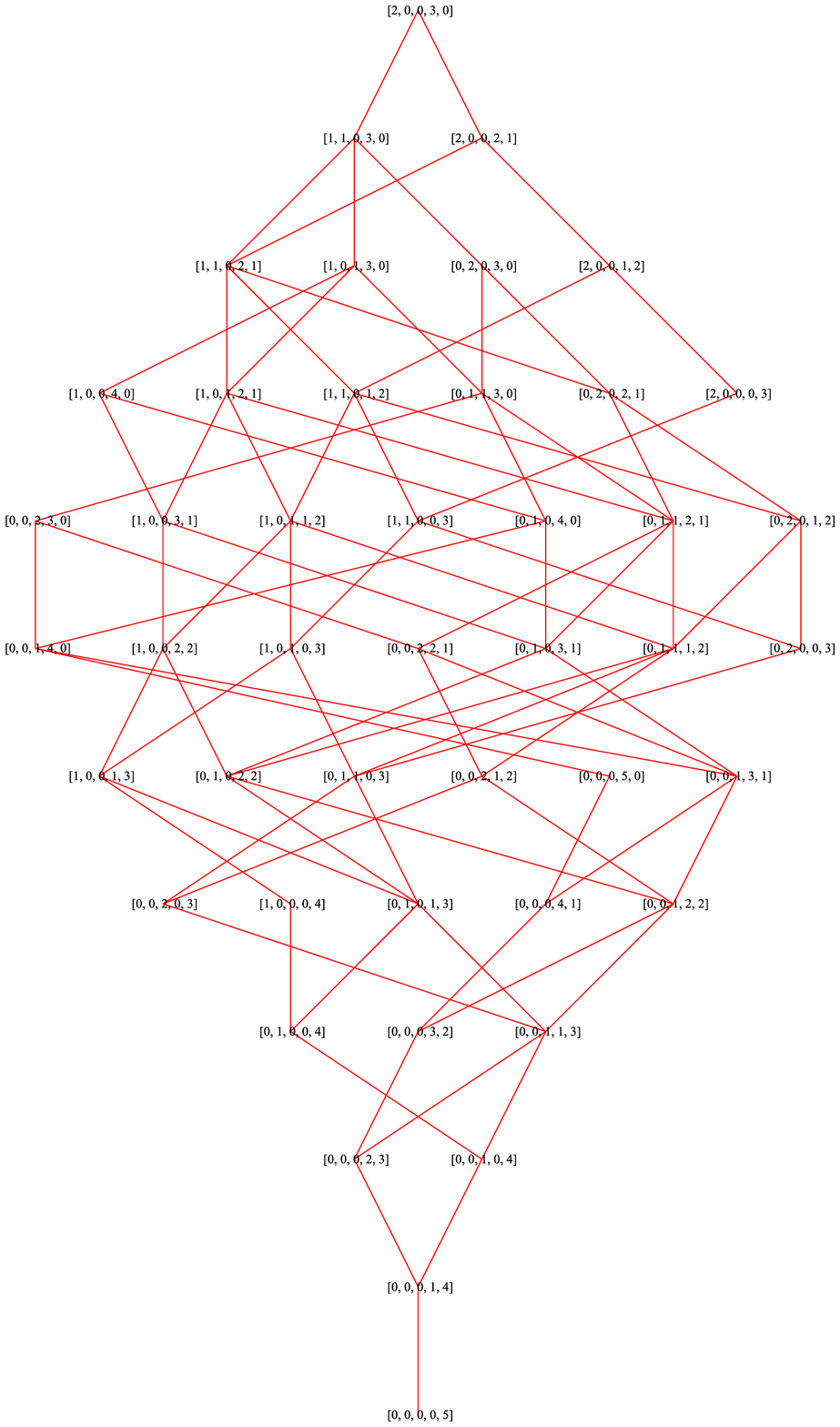}}
  \caption{Poset structure of family SS7}
  \label{fig:SS7a}
\end{figure}

\newpage

\begin{figure}[h] 
	\centering
		\includegraphics[scale=.85]{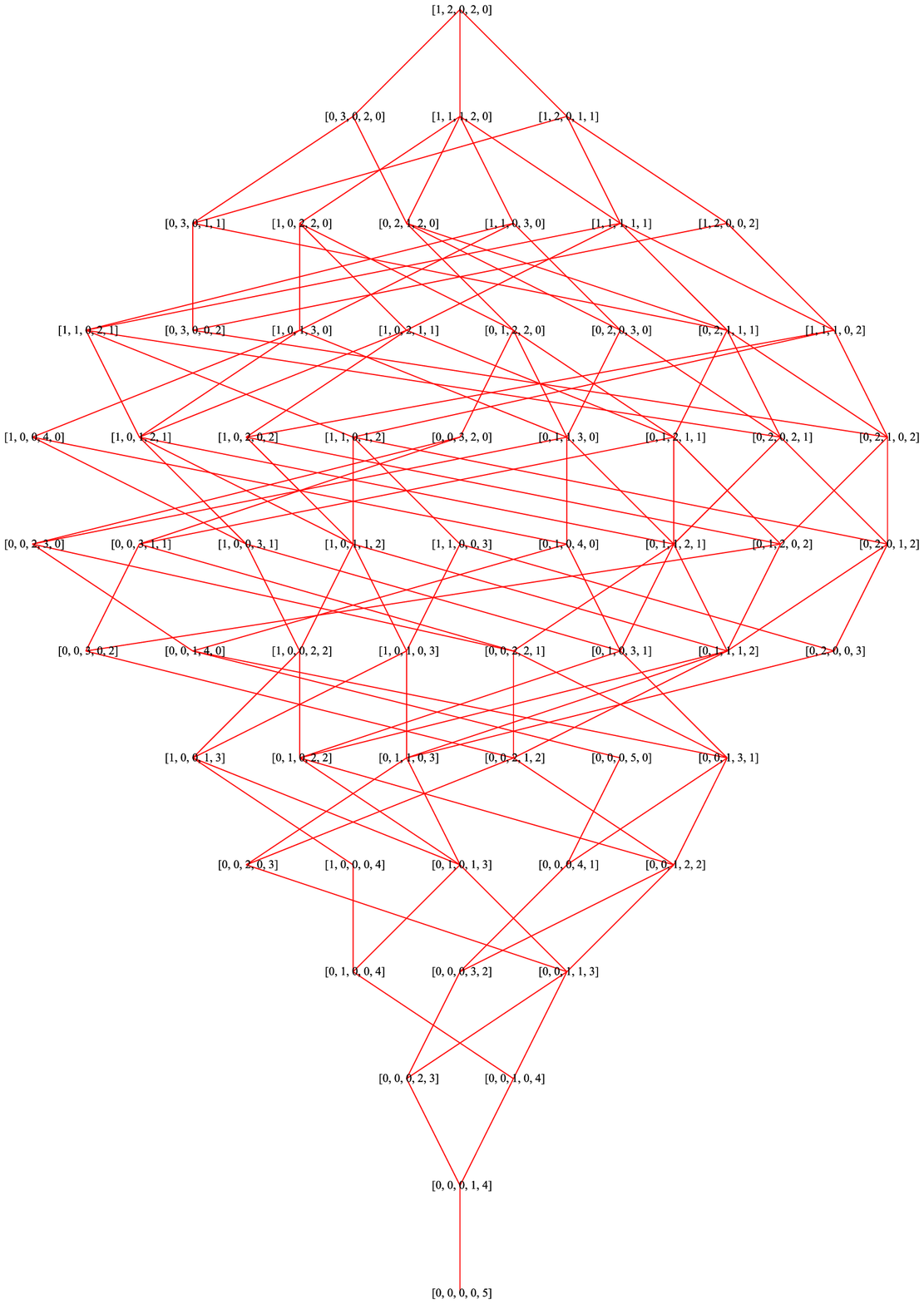}
	\caption{Poset structure of family SS7 }
	\label{fig:SS7b}
\end{figure}

\newpage

\bibliographystyle{alpha}
\bibliography{bibliography}

\end{document}